\documentclass[a4paper,11pt]{article}

\title{Absolutely indecomposable quasi-parabolic $G$-bundles and the multiplicity   of irreducible characters}

\author{  GyeongHyeon Nam \\ {\it
  Aalto University} \\{\tt  gyeonghyeon.nam@aalto.fi}}
 \date{}

\usepackage{amsmath, amsthm, amssymb, amsfonts, amsthm, mathrsfs}
\usepackage{textgreek, makecell, enumerate, bm, mathtools, thmtools, float, cite, wrapfig, multicol,relsize}
\usepackage[dvipsnames]{xcolor}
\usepackage{colonequals}
\usepackage{hyperref}
\usepackage{a4wide,xy}
\usepackage{tikz-cd}
  
\usepackage[nobysame, alphabetic]{amsrefs}

\let\oldmarginpar\marginpar
\renewcommand\marginpar[1]{\-\oldmarginpar[\raggedleft\footnotesize #1]
	{\raggedright\footnotesize #1}}

\makeindex

\usepackage{xparse}
\let\realItem\item 
\makeatletter
\NewDocumentCommand\myItem{ o }{%
   \IfNoValueTF{#1}%
      {\realItem}
      {\realItem[#1]\def\@currentlabel{#1}}
}
\makeatother

\usepackage{enumitem}    
\setlist[enumerate]{
    before=\let\item\myItem,       
    label=\textnormal{(\arabic*)}, 
    widest=(2')                    
}

\xyoption{all}
\input{xypic}

\numberwithin{equation}{section}

\makeatother
\theoremstyle{plain}
\newtheorem{thm}{Theorem}
\newtheorem{lem}[thm]{Lemma}
\newtheorem{prop}[thm]{Proposition}

\newtheorem{conj}[thm]{Conjecture}

\newtheorem{cor}[thm]{Corollary}

\newtheorem*{assu}{Assumption}
\theoremstyle{definition}
\newtheorem{defe}[thm]{Definition}
 
\theoremstyle{remark}
\newtheorem{rem}[thm]{Remark}

\usepackage{enumitem}
\definecolor{red}{rgb}{1,0,0}
\definecolor{orange}{rgb}{1,0.5,0}
\definecolor{purple}{rgb}{.5,.2,.8}
\definecolor{blue}{rgb}{.2,.2,.8}
\definecolor{green}{rgb}{.4,.6,.4}

\newcommand{\nc}{\newcommand}
\newcommand{\rc}{\renewcommand}
\nc{\on}{\operatorname}
\nc{\Fq}{\mathbb{F}_q}
\nc{\fg}{\mathfrak{g}}
\nc{\ft}{\mathfrak{t}}
\nc{\fl}{\mathfrak{l}}
\nc{\fp}{\mathfrak{p}}
\nc{\fn}{\mathfrak{n}}
\nc{\Ad}{\mathrm{Ad}}
\nc{\fb}{\mathfrak{b}}

\rc{\AA}{\mathbb{A}}	
\nc{\BB}{\mathbb{B}}	
\nc{\CC}{\mathbb{C}}	
\nc{\DD}{\mathbb{D}}	
\nc{\EE}{\mathbb{E}}	
\nc{\FF}{\mathbb{F}}	
\nc{\GG}{\mathbb{G}}	
\nc{\HH}{\mathbb{H}}	
\nc{\II}{\mathbb{I}}	
\nc{\JJ}{\mathbb{J}}	
\nc{\KK}{\mathbb{K}}	
\nc{\LL}{\mathbb{L}}	
\nc{\MM}{\mathbb{M}}	
\nc{\NN}{\mathbb{N}}	
\nc{\OO}{\mathbb{O}}	
\nc{\PP}{\mathbb{P}}	
\nc{\QQ}{\mathbb{Q}}	
\nc{\RR}{\mathbb{R}}	
\rc{\SS}{\mathbb{S}}	
\nc{\TT}{\mathbb{T}}	
\nc{\UU}{\mathbb{U}}	
\nc{\VV}{\mathbb{V}}	
\nc{\WW}{\mathbb{W}}	
\nc{\XX}{\mathbb{X}}	
\nc{\YY}{\mathbb{Y}}	
\nc{\ZZ}{\mathbb{Z}}	

\nc{\bA}{\mathbf{A}}	
\nc{\bB}{\mathbf{B}}	
\nc{\bC}{\mathbf{C}}	
\nc{\bD}{\mathbf{D}}	
\nc{\bE}{\mathbf{E}}	
\nc{\bF}{\mathbf{F}}	
\nc{\bG}{\mathbf{G}}	
\nc{\bH}{\mathbf{H}}	
\nc{\bI}{\mathbf{I}}	
\nc{\bJ}{\mathbf{J}}	
\nc{\bK}{\mathbf{K}}	
\nc{\bL}{\mathbf{L}}	
\nc{\bM}{\mathbf{M}}	
\nc{\bN}{\mathbf{N}}	
\nc{\bO}{\mathbf{O}}	
\nc{\bP}{\mathbf{P}}	
\nc{\bQ}{\mathbf{Q}}	
\nc{\bR}{\mathbf{R}}	
\nc{\bS}{\mathbf{S}}	
\nc{\bT}{\mathbf{T}}	
\nc{\bU}{\mathbf{U}}	
\nc{\bV}{\mathbf{V}}	
\nc{\bW}{\mathbf{W}}	
\nc{\bX}{\mathbf{X}}	
\nc{\bY}{\mathbf{Y}}	
\nc{\bZ}{\mathbf{Z}}	
 
\nc{\calA}{\mathcal{A}}	
\nc{\calB}{\mathcal{B}}	
\nc{\calC}{\mathcal{C}}	
\nc{\calD}{\mathcal{D}}	
\nc{\calE}{\mathcal{E}}	
\nc{\calF}{\mathcal{F}}	
\nc{\calG}{\mathcal{G}}	
\nc{\calH}{\mathcal{H}}	
\nc{\calI}{\mathcal{I}}	
\nc{\calJ}{\mathcal{J}}	
\nc{\calK}{\mathcal{K}}	
\nc{\calL}{\mathcal{L}}	
\nc{\calM}{\mathcal{M}}	
\nc{\calN}{\mathcal{N}}	
\nc{\calO}{\mathcal{O}}	
\nc{\calP}{\mathcal{P}}	
\nc{\calQ}{\mathcal{Q}}	
\nc{\calR}{\mathcal{R}}	
\nc{\calS}{\mathcal{S}}
\nc{\calT}{\mathcal{T}}	
\nc{\cU}{\mathcal{U}}	
\nc{\calV}{\mathcal{V}}	
\nc{\calW}{\mathcal{W}}
\nc{\calX}{\mathcal{X}}	
\nc{\calY}{\mathcal{Y}}	
\nc{\calZ}{\mathcal{Z}}

\nc{\fraka}{\mathfrak{a}}
\nc{\frakb}{\mathfrak{b}}
\nc{\frakc}{\mathfrak{c}}
\nc{\frakd}{\mathfrak{d}}
\nc{\frake}{\mathfrak{e}}
\nc{\frakf}{\mathfrak{f}}
\nc{\frakg}{\mathfrak{g}}
\nc{\frakh}{\mathfrak{h}}
\nc{\fraki}{\mathfrak{i}}
\nc{\frakj}{\mathfrak{j}}
\nc{\frakk}{\mathfrak{k}}
\nc{\frakl}{\mathfrak{l}}
\nc{\frakm}{\mathfrak{m}}
\nc{\frakn}{\mathfrak{n}}
\nc{\frako}{\mathfrak{o}}
\nc{\frakp}{\mathfrak{p}}
\nc{\frakq}{\mathfrak{q}}
\nc{\frakr}{\mathfrak{r}}
\nc{\fraks}{\mathfrak{s}}
\nc{\frakt}{\mathfrak{t}}
\nc{\fraku}{\mathfrak{u}}
\nc{\frakv}{\mathfrak{v}}
\nc{\frakw}{\mathfrak{w}}
\nc{\frakx}{\mathfrak{x}}
\nc{\fraky}{\mathfrak{y}}
\nc{\frakz}{\mathfrak{z}}

\nc{\frakA}{\mathfrak{A}}
\nc{\frakB}{\mathfrak{B}}
\nc{\frakC}{\mathfrak{C}}
\nc{\frakD}{\mathfrak{D}}
\nc{\frakE}{\mathfrak{E}}
\nc{\frakF}{\mathfrak{F}}
\nc{\frakG}{\mathfrak{G}}
\nc{\frakH}{\mathfrak{H}}
\nc{\frakI}{\mathfrak{I}}
\nc{\frakJ}{\mathfrak{J}}
\nc{\frakK}{\mathfrak{K}}
\nc{\frakL}{\mathfrak{L}}
\nc{\frakM}{\mathfrak{M}}
\nc{\frakN}{\mathfrak{N}}
\nc{\frakO}{\mathfrak{O}}
\nc{\frakP}{\mathfrak{P}}
\nc{\frakQ}{\mathfrak{Q}}
\nc{\frakR}{\mathfrak{R}}
\nc{\frakS}{\mathfrak{S}}
\nc{\frakT}{\mathfrak{T}}
\nc{\frakU}{\mathfrak{U}}
\nc{\frakV}{\mathfrak{V}}
\nc{\frakW}{\mathfrak{W}}
\nc{\frakX}{\mathfrak{X}}
\nc{\frakY}{\mathfrak{Y}}
\nc{\frakZ}{\mathfrak{Z}}

\nc{\Lie}{\on{Lie}}
\nc{\GL}{\on{GL}}
\nc{\PGL}{\on{PGL}}
\nc{\SL}{\on{SL}}
\nc{\Sp}{\on{Sp}}
\nc{\GSp}{\on{GSp}}
\nc{\SO}{\on{SO}}
\nc{\Or}{\on{O}}
\nc{\gl}{\on{\mathfrak{gl}}}
\rc{\sl}{\on{\mathfrak{sl}}}

\nc{\Mat}{\on{Mat}}
\nc{\Fun}{\on{Fun}}
\nc{\Aut}{\on{Aut}}
\nc{\End}{\on{End}}
\nc{\Hom}{\on{Hom}}
\nc{\Sym}{\on{Sym}}
\nc{\Span}{\on{span}}
\nc{\Irr}{\on{Irr}}
\nc{\Uch}{\on{Uch}}
\nc{\Spec}{\on{Spec}}

\nc{\Ind}{\on{Ind}}
\nc{\Res}{\on{Res}}
\nc{\stab}{\on{stab}}
\nc{\orb}{\on{orb}}
\rc{\ker}{\on{ker}}
\nc{\im}{\on{im}}
\nc{\tr}{\on{tr}}
\nc{\ord}{\on{ord}}
\nc{\rank}{\on{rank}}
\nc{\Tor}{\on{Tor}}

\nc{\Id}{\on{Id}}
\nc{\Log}{\on{Log}}
\nc{\Exp}{\on{Exp}}
\nc{\Frac}{\on{Frac}}
\nc{\diag}{\on{diag}}
\nc{\D}{\on{D}}

\nc{\St}{\mathrm{St}}
\nc{\triv}{\mathrm{triv}}
\nc{\sgn}{\mathrm{sgn}}
\nc{\reg}{\mathrm{reg}}

\nc{\op}{\mathrm{op}}
\nc{\ad}{\mathrm{ad}}
\rc{\ss}{\mathrm{ss}}
\nc{\HLV}{\mathrm{HLV}}
\nc{\GIT}{\mathrm{GIT}}
\nc{\stack}{\mathrm{stack}}

\allowdisplaybreaks

\begin{document}

\maketitle
\setcounter{tocdepth}{1}

\begin{abstract}Absolutely indecomposable vector bundle and parabolic vector bundles are well-studied via quiver representations. In this paper, we study absolutely indecomposable quasi-parabolic $G$-bundles over $\mathbb{P}^1$ with generic additive character varieties. Furthermore, we give a geometric interpretation of the multiplicity of the tensor product of irreducible characters of finite reductive groups using  generic additive character varieties. 
\end{abstract}

\tableofcontents

 \section{Introduction}
  
 In this paper, $G$ is a connected (untwisted) reductive group whose derived subgroup is simply connected defined over $k$ (where $k$ is an algebraically closed field with positive characteristic), $F$ the Frobenius map such that $k^F=\Fq$, $T$ a  maximal split torus of $G$, $B$ a Borel subgroup containing $T$, $P$ a parabolic subgroup containing $T$, $Z_G$ the centre of $G$, $W_G$ the Weyl group over $T$, $\fg$ the Lie algebra of $G$, and $\Phi_G$ the root system of $G$. In general, we drop $G$ if it is obvious.
 In addition, we denote the set of (complex) irreducible characters of a finite group $H$ by  $\widehat{H}$.

  \subsection{Absolutely indecomposable quasi-parabolic $G$-bundles}
 A vector bundle over a projective curve $X_{\Fq}$   is called \emph{absolutely indecomposable} if it cannot be decomposed two non-zero vector bundles over $X_{k}$.   Schiffmann showed that the number of absolutely indecomposable rank $n$ vector bundles on $X$ with degree $d$ (when $\mathrm{gcd}(d,n)=1$) is the number of stable Higgs bundles on $X$ with the same rank and degree in \cite{Sch}. Furthermore, Dobrovolska, Ginzburg and Travkin generalised the previous story to the case $\mathrm{gcd}(d,n)\neq 1$. In this case, the number of absolutely indecomposable vector bundles is related to the number of parabolic Higgs bundles, cf. \cite{DGT}. Quiver representations are important in their stories, which are studied well by Kac (e.g. \cite{kac2006root}). Recently, Jakob and Yun counted absolutely indecomposable $G$-bundles over $X_{\Fq}$ over motivic terms in \cite{JY}, and this is related to the moduli stack of parabolic $G$-Higgs bundles. Recall that a principal $G$-bundle $\mathcal{E}$ is called \emph{absolutely indecomposable} when the neutral component of $\mathrm{Aut}(\mathcal{E})$ is unipotent mod the centre.

Furthermore, there is a parabolic version of this story. In \cite{Let16}, Letellier considered the number of quasi-parabolic vector bundles over $\mathbb{P}_{\Fq}^1$ over arbitrary base vector bundle. Recall that quasi-parabolic vector bundle means that this bundle has a parabolic structure  without a weight structure. When the base vector bundle is $\mathcal{O}(a)^n$ (the trivial line bundle $\mathcal{O}$), there is a bijection between the set of (absolutely) indecomposable representations of star-shaped quiver $(\Gamma, v)$ and the set of (absolutely) indecomposable quasi-parabolic vector bundles, cf. \cite[\S2.2.8]{Let16}.
 
 \begin{figure}[H]
 \begin{tikzpicture}[ 
    v/.style={circle, draw, minimum size=3.5mm, inner sep=0pt},
    >=stealth
]
 
    \node[v, label=110:$0$] (0) at (0, 0) {};
 
    \node[v, label=above:{$[1, 1]$}]   (11) at (1.5, 1.8) {};
    \node[v, label=above:{$[1, 2]$}]   (12) at (4.0, 1.8) {};
    \node                              (1d) at (6.5, 1.8) {$\cdots$};
    \node[v, label=above:{$[1, l_1]$}] (1l) at (9.0, 1.8) {};

    \draw[<-] (0)  -- (11);
    \draw[<-] (11) -- (12);
    \draw[<-] (12) -- (5.5, 1.8);
    \draw[<-] (7.5, 1.8) -- (1l);

    \node[v, label=above:{$[2, 1]$}]   (21) at (1.5, 0.6) {};
    \node[v, label=above:{$[2, 2]$}]   (22) at (4.0, 0.6) {};
    \node                              (2d) at (6.5, 0.6) {$\cdots$};
    \node[v, label=above:{$[2, l_2]$}] (2l) at (9.0, 0.6) {};

    \draw[<-] (0)  -- (21);
    \draw[<-] (21) -- (22);
    \draw[<-] (22) -- (5.5, 0.6);
    \draw[<-] (7.5, 0.6) -- (2l);

    \node at (1.5, -0.6) {$\vdots$};
    \node at (4.0, -0.6) {$\vdots$};

    \node[v, label=below:{$[r, 1]$}]   (r1) at (1.5, -1.8) {};
    \node[v, label=below:{$[r, 2]$}]   (r2) at (4.0, -1.8) {};
    \node                              (rd) at (6.5, -1.8) {$\cdots$};
    \node[v, label=below:{$[r, l_r]$}] (rl) at (9.0, -1.8) {};

    \draw[<-] (0)  -- (r1);
    \draw[<-] (r1) -- (r2);
    \draw[<-] (r2) -- (5.5, -1.8);
    \draw[<-] (7.5, -1.8) -- (rl);
    \end{tikzpicture}
    \centering
\caption{Star-shaped quiver}
\label{Star-shaped quiver}
\end{figure}

In this paper,  we study absolutely indecomposable quasi-parabolic $G$-bundles $\mathcal{E}$ over $\mathbb{P}^1:=\mathbb{P}_k^1$ with the (principal) trivial base $G$-bundle. Quasi-parabolic $G$-bundle is constructed by an element $G/P_1\times \ldots \times G/P_\ell$, where $P_1,\ldots , P_\ell$ are parabolic subgroups of $G$ following \cite[Definition (8.3)]{LS}.  Adapting the definition of \cite{JY}, a quasi-parabolic $G$-bundle is called \emph{absolutely indecomposable} if the neutral component of $\mathrm{Aut}(\mathcal{E})$ is unipotent mod the centre.      Furthermore, we consider automorphisms that fix a divisor on $\mathbb{P}^1$  pointwise
and $\ell\geq 3$. Then this implies that every automorphism of $\mathcal{E}$ acts on $\mathbb{P}^1$ by the identity.
The aim of this paper is to consider a relation between   absolutely indecomposable parabolic $G$-bundles and   generic additive character varieties appearing in \cite{KNWG} since we do not use quiver representation anymore when we consider arbitrary reductive groups.

\subsubsection{Motivation  }\label{sss:moti-future}
When $G=\GL_n$, previous results about absolutely indecomposable vector bundles and parabolic vector bundles (for example, \cite{DGT,kacdt,Let16,Sch}) are deeply related with quiver representations. Furthermore, when we consider a star-shaped quiver with a tuple of indivisible dimension vectors $v:=(v_1, \ldots , v_\ell)$, the size of corresponding quiver variety \[
Q_v:=\{(X_1, \ldots , X_\ell) \in O_1\times \ldots \times O_\ell\,|\, X_1+\cdots + X_\ell=0\}/\!\!/ \mathrm{GL}_n
\]
(where $(O_1, \ldots , O_\ell)$ is a generic semisimple tuple corresponding to $v$)
is the same with the number of absolutely indecomposable representation of this quiver (up to the power of $q$), cf. \cite[Remark 1.3.3]{hausel2011arithmetic}. So we can see that there is a relation between absolutely indecomposable vector bundles and a quiver variety. However, if we consider arbitrary reductive groups,   we could not use quiver representations, and so we focus on an additive character variety.

From now on, let us consider the case   $v=((n,n-1, \ldots , 1),\ldots ,(n,n-1, \ldots , 1))$, and each $O_i$ is a regular semisimple adjoint orbit in $\mathfrak{gl}_n$. Then we can consider the following $\tilde{Q}_v:$
\[
 \left\{ (X_1, \ldots , X_\ell, g_1B, \ldots, g_\ell B)\in \prod_{i=1}^\ell O_i \times  (\mathrm{GL}_n/B)^\ell\,|\, \sum_{i=1}^\ell X_i=0,\ \mathrm{Ad}_{g_i}(X_i)\in O_i \cap \ft\text{ for all }i\right\}/\!\!/\mathrm{GL}_n.
\]
Note that we have an isomorphism to $Q_v$ by considering the projection map to the first $\ell$-elements in $\tilde{Q}_v$. In addition, the structure of $\tilde{Q}_v$ gives an idea to study a relation between an additive character variety and absolutely indecomposable $p\in (G/B)^\ell$ (i.e., an absolutely indecomposable quasi-parabolic $G$-bundle over the trivial base $G$-bundle).

 \begin{rem}
 Our result is related to a type of an additive Deligne-Simpson problem such that finding $(p_1, \ldots , p_\ell)\in  (P_1, \ldots , P_\ell)$ and $(s_1 , \ldots , s_\ell)\in \ft^\ell$ for a given $(g_1, \ldots , g_\ell)\in G^\ell$. Under the generic condition on $(s_1, \ldots , s_\ell)$, the tuple $(\Ad_{g_1p_1}(s_1), \ldots , \Ad_{g_\ell p_\ell}(s_\ell))$ is irreducible. Recall that an additive Deligne-Simpson problem is about the existence of a solution $(g_1, \ldots , g_\ell)\in G^\ell$ of the equation $\Ad_{g_1}(X_1)+\cdots +\Ad_{g_\ell}(X_\ell)$ for a given $(X_1, \ldots , X_\ell)\in \fg^\ell$ such that $(\Ad_{g_1}(X_1),\ldots , \Ad_{g_\ell}(X_\ell))$ is irreducible.   
 \end{rem}

 \subsubsection{Result}
 The main starting object of this paper is quasi-parabolic $G$-bundles with the trivial base $G$-bundle corresponding to elements in $(G/B)^\ell$. Note that when $\mathcal{E}$ corresponds to an element $p\in (G/B)^\ell $, we have $\mathrm{Aut}(\mathcal{E})= \mathrm{Stab}_G(p)$, where $G$ acts on $p$ by the diagonal left multiplication. This is because an automorphism of $\mathcal{E}$ is determined by a regular function from $\mathbb{P}^1$ to $G$, and such regular function is constant (due to  the property of a projective variety).
 From this observation, we consider elements in $(G/B)^\ell$ instead of   quasi-parabolic $G$-bundles over $\mathbb{P}^1$ with the trivial base $G$-bundle. Obviously, an element  $p$ in $(G/B)^\ell$ is called \emph{absolutely indecomposable} when $\mathrm{Stab}_G(p)^\circ/Z_G$ is unipotent. This is equivalent to that the pseudo-Levi subgroup \(C_G(t)\) is isolated  for every \(t \in \mathrm{Stab}_G(p)^{ss}$, where $H^{ss}$ is the set of semisimple elements in $H$.  
Then the  following is one of our main results.
 
 \begin{thm}[Theorem \ref{conj:between2and3} and \ref{thm:between2and3}] \label{thm:main-abs}
 An element $(g_1B,\ldots , g_\ell B)\in (G/B)^\ell$ is absolutely indecomposable for given $g_1, \ldots , g_\ell\in G$ if and only if there exists $(b_1, \ldots , b_\ell)\in B^\ell$ such that 
\begin{equation}\label{eq:thm1}
 	\mathrm{Ad}_{g_1 b_1}(s_1) + \cdots  + \mathrm{Ad}_{g_{\ell} b_{\ell}}(s_{\ell})
		= 0
 \end{equation}
 for a tuple of generic regular semisimple adjoint orbits  $(\mathrm{Ad}_G(s_1), \ldots , \mathrm{Ad}_G(s_\ell))$.
 \end{thm}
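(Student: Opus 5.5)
Write $\fb=\ft\oplus\fn$ and $\fb_i:=\Ad_{g_i}(\fb)$. For regular semisimple $s_i\in\ft$ the set $\{\Ad_{g_ib}(s_i): b\in B\}$ is exactly $\Ad_{g_i}(s_i+\fn)$, i.e. the elements of $\fb_i$ whose image in $\fb_i/[\fb_i,\fb_i]\cong\ft$ is $s_i$. So \eqref{eq:thm1} is solvable if and only if $(s_1,\dots,s_\ell)$ lies in the image of the linear map
\[
\mu:\ \{(x_1,\dots,x_\ell)\in \fb_1\times\cdots\times\fb_\ell : x_1+\cdots+x_\ell=0\}\longrightarrow \ft^\ell ,\qquad (x_i)\mapsto (\Ad_{g_i}^{-1}x_i \bmod \fn)_i .
\]
The plan is therefore to compute $\im\mu$ and compare it with the genericity hyperplanes. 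We use the trace form, assumed nondegenerate on $\fg$ (good characteristic). The image $\im\mu$ is the annihilator of a kernel $K$, and we identify that kernel by duality. Dually, $\mu^*$ sends $(t_i)\in\ft^\ell$ to the class of $(\Ad_{g_i}t_i)$ in the dual of $\{\sum x_i=0\}$. That class vanishes exactly when there is $y\in\fg$ with $y\equiv \Ad_{g_i}t_i \bmod \fb_i^\perp=\Ad_{g_i}\fn$ for all $i$. So
\[
K=\{(t_i)\in\ft^\ell : \exists\, y\in \textstyle\bigcap_i \Ad_{g_i}(\fb),\ \text{with } \Ad_{g_i}^{-1}y \text{ having } \ft\text{-part } t_i\}.
\]
In other words, $K$ is the image of the Lie algebra $\frakh:=\bigcap_i\fb_i$, which is $\on{Lie}\,\mathrm{Stab}_G(p)$ up to nilpotent/characteristic issues, under the map $y\mapsto (\text{semisimple part of } y \text{ read in each }\fb_i)$.

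Since $\im\mu=K^\perp$, solvability for a given $(s_i)$ means that $\sum_i\langle s_i,t_i\rangle=0$ for all $(t_i)\in K$. The argument then proceeds in the two directions.

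($\Leftarrow$) Suppose $p$ is not absolutely indecomposable. Then $\mathrm{Stab}_G(p)^\circ/Z_G$ contains a nontrivial torus, and conjugating its cocharacter into each $B$ gives a $\lambda\in X_*(\mathrm{Stab}_G(p))$ not in the centre. Its differential $y=d\lambda(1)$ lies in $\frakh$, and each component $t_i=\Ad_{g_i}^{-1}$-semisimple part is $W$-conjugate to $d\lambda(1)$. The resulting relation $\sum_i\langle s_i,t_i\rangle=0$ is precisely one of the equations excluded by genericity of $(\Ad_G(s_i))$, in the sense of \cite{KNWG}: a proper Levi $L=C_G(\lambda)$ and $w_i\in W$ with $\sum_i \langle w_is_i,\lambda\rangle=0$, together with the central case $\sum_i s_i\in[\fg,\fg]$. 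So no generic tuple admits a solution. Here one checks that the definition of generic matches "for every proper Levi $L$ and every $\lambda\in\frakz(\fl)$ with $\lambda\notin\frakz(\fg)$."

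($\Rightarrow$) Suppose $p$ is absolutely indecomposable. Every semisimple element of $\frakh$ then lies in a torus of $\mathrm{Stab}_G(p)$, which is central. This needs the step that semisimple elements of $\on{Lie}$ of a stabilizer come from tori of the stabilizer, which I expect to be the main obstacle in positive characteristic; I would handle it via the smoothness of $\mathrm{Stab}_G(p)$ as an intersection of Borel subgroups and by working with the Jordan decomposition inside the solvable group $\bigcap_i g_iBg_i^{-1}$. The nilpotent elements of $\frakh$ contribute $t_i=0$. So $K=\{(z,\dots,z): z\in\frakz(\fg)\}$, and the only constraint is $\sum_i s_i\in[\fg,\fg]$. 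This constraint is part of the definition of a generic tuple, so we fix one generic regular semisimple tuple, which exists for $q$ large as in \cite{KNWG}, and obtain the $b_i$.
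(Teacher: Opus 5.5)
Your ($\Rightarrow$) half is the paper's own argument for Theorem \ref{conj:between2and3}. As in Lemma \ref{lem:dominant}, you dualize, represent a kernel functional by some $y\in\bigcap_i\Ad_{g_i}(\fb)$, and show its semisimple part is central.

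\textbf{The gap is in ($\Leftarrow$).} A non-central cocharacter $\lambda$ of $\mathrm{Stab}_G(p)$ only gives a \emph{single} linear relation $\langle \sum_i w_is_i,\, d\lambda(1)\rangle=0$. Genericity does not exclude that. Definition \ref{defe:genericelement} forbids $\sum_iw_is_i\in\ft\cap[\fl,\fl]$, and for a nondegenerate invariant form $\ft\cap[\fl,\fl]$ is the orthogonal complement of $\frakz(\fl)$ in $\ft$. So genericity only forbids $\sum_iw_is_i$ from being orthogonal to \emph{all} of $\frakz(\fl)$, which is $\dim\frakz(\fl)-\dim\frakz(\fg)$ conditions at once.

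Your ``one checks that the definition matches'' has the quantifier the wrong way round. Read literally, with $\lambda$ ranging over $\frakz(\fl)\setminus\frakz(\fg)$, it would rule out every tuple once the semisimple rank is at least $2$: take $L=T$ and note that every $v\in\ft$ is orthogonal to some non-central element of $\ft$. Concretely, take $\SL_3$ and a $\lambda$ with $d\lambda(1)$ regular, so $L=C_G(\lambda)=T$. Genericity at $L$ asks only $\sum_iw_is_i\neq 0$, and generic regular tuples with $\sum_iw_is_i$ in the hyperplane $d\lambda(1)^\perp$ exist. There is also a side issue: $d\lambda(1)$ can be central for non-central $\lambda$ in characteristic $p$, e.g.\ $\lambda(x)=\diag(x,x^{1+p})$ in $\GL_2$.

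\textbf{What is missing} is the passage from one $\lambda$ to all of $\frakz(\fl)$, with Weyl elements that do not depend on the vector. This is exactly what the paper's step $u_iw_iv_i=l_ix_i$ achieves.
\begin{itemize}
\item After translating $p$, let $\lambda$ land in $T$ and put $L=C_G(\lambda)$, a proper Levi subgroup.
\item Since $\lambda$ fixes $g_iB$, we have $g_iB\in(G/B)^{\lambda}=\bigsqcup_{x}L\dot xB/B$. So $g_i=l_i\dot x_ib_i$ with $l_i\in L$, $x_i\in W$ and $b_i\in B$.
\item Because $L$ fixes $\frakz(\fl)$, every $z\in\frakz(\fl)$ satisfies $\Ad_{g_i}^{-1}z=\Ad_{b_i^{-1}}(x_i^{-1}\cdot z)$. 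Hence $z\in\frakh$, with $\ft$-parts $t_i=x_i^{-1}\cdot z$.
\item Your duality then gives $\langle\sum_ix_is_i,z\rangle=0$ for all $z\in\frakz(\fl)$, i.e.\ $\sum_ix_is_i\in\ft\cap[\fl,\fl]$. Genericity does forbid this.
\end{itemize}
With this repair, your approach handles both directions by the same duality. The paper instead uses duality only for the forward direction and a Bruhat-decomposition computation inside $L$ for the converse.

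\textbf{On the obstacle you flag in ($\Rightarrow$).} The paper passes over this step in one sentence. It does not need smoothness of $\bigcap_ig_iBg_i^{-1}$, which is not clear in any case. Take a semisimple $y\in\frakh$ and write $y=\Ad_{g_ib_i}(t_i)$ with $b_i\in B$ and $t_i\in\ft$. Then $Z(C_G(y))^\circ=g_ib_i\,Z(C_G(t_i))^\circ\,(g_ib_i)^{-1}\subset g_iBg_i^{-1}$ for every $i$. So this torus lies in $\mathrm{Stab}_G(p)$, and hence in $Z_G$ by absolute indecomposability. Since $C_G(y)$ is a Levi subgroup, $C_G(y)=C_G(Z(C_G(y))^\circ)=G$, so $y\in\frakz(\fg)$.
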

 For the definition of generic, please see \S\ref{s:genericdefe}. 
  We prove this theorem by showing each direction in   Theorem \ref{conj:between2and3} and Theorem \ref{thm:between2and3}. 
 
 \bigskip
 We can extend this theorem to $G/P_1\times\ldots \times G/P_\ell$, where every $P_i=L_i\rtimes U_i$ is a parabolic subgroup satisfying a condition that there exists a generic tuple $(s_1, \ldots , s_\ell)\in \ft^{\ell}$ such that $C_G(s_i)=L_i$ for all $i$. 
 In addition, we call such tuple $(s_1, \ldots , s_\ell)$ as a tuple of generic semisimple $(P_1, \ldots , P_\ell)$-type. Note that it is easy to see that there is a tuple of generic semisimple $(B,\ldots , B)$ type (when the size of tuple is at least $3$), cf. \cite[\S3.1.6]{KNWG}.

 The proof of Theorem \ref{thm:main-abs} uses clear notations, and the proof gives a simple reduction to Theorem \ref{thm:generalisation}.
  With this reason, we start from the Borel case, and then extend to the parabolic case.
 \begin{thm}[Theorem \ref{thm:extension}]\label{thm:main-abs2}\label{thm:generalisation}
   An element $(g_1P_1, \ldots , g_\ell P_\ell)\in G/P_1\times \ldots \times G/P_\ell$ is absolutely indecomposable for given $g_1, \ldots , g_\ell\in G$ if and only if there exists $(p_1, \ldots , p_\ell) \in (P_1, \ldots , P_\ell)$ such that 
   \[
   \Ad_{g_1p_1}(s_1)+\cdots+   \Ad_{g_\ell p_\ell}(s_\ell)=0
   \]
   for a tuple of generic semisimple $ (P_1, \ldots , P_\ell)$-type adjoint orbits $(\Ad_G(s_1),\ldots , \Ad_G(s_\ell))$.
   \end{thm}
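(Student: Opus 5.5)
The plan is to treat the two implications separately and to replace the nonlinear equation by a linear one. The key observation is that, since $C_G(s_i)=L_i$ and $P_i=L_i\rtimes U_i$, the $P_i$-orbit of $s_i$ is the whole affine space $s_i+\fn_i$, where $\fn_i=\Lie U_i$; this is the standard fact that $U_i\to s_i+\fn_i$, $u\mapsto \Ad_u(s_i)$, is an isomorphism when $s_i$ is $L_i$-central and $\fp_i$-regular. Consequently, as $(p_1,\ldots,p_\ell)$ ranges over $P_1\times\cdots\times P_\ell$, the element $\sum_i\Ad_{g_ip_i}(s_i)$ ranges over the affine subspace
\[
\textstyle\sum_i \Ad_{g_i}(s_i)+V,\qquad V:=\sum_i \Ad_{g_i}(\fn_i).
\]
So the equation is solvable if and only if $\sum_i\Ad_{g_i}(s_i)\in V$. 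Throughout I fix a nondegenerate $G$-invariant symmetric form $\langle\,,\rangle$ on $\fg$ (as in \cite{KNWG}; this uses a mild restriction on $p$). With respect to this form, $\fn_i^{\perp}=\fp_i$, hence
\[
V^{\perp}=\textstyle\bigcap_i \Ad_{g_i}(\fp_i)=\Lie\bigl(\mathrm{Stab}_G(p)\bigr).
\]
Here $\mathrm{Stab}_G(p)=\bigcap_i g_iP_ig_i^{-1}$ is an intersection of parabolics containing a common maximal torus, hence smooth. The solvability criterion therefore becomes: $\sum_i\Ad_{g_i}(s_i)$ is orthogonal to $\Lie\mathrm{Stab}_G(p)$.

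\textbf{($\Rightarrow$).} Suppose $p$ is absolutely indecomposable. Then $\mathrm{Stab}_G(p)^\circ=Z_G^\circ\cdot R$ with $R$ unipotent, so every $y\in\Lie\mathrm{Stab}_G(p)$ can be written $y=z+n$ with $z\in\frakz(\fg)$ and $n$ nilpotent in $\bigcap_i\Ad_{g_i}(\fp_i)$.

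For the nilpotent part, $\langle \Ad_{g_i}(s_i),n\rangle=\langle s_i,\Ad_{g_i^{-1}}n\rangle$. Since $s_i\in\frakz(\fl_i)$ pairs trivially with $\fn_i$, this reduces to pairing $s_i$ with the $\fl_i$-component of $\Ad_{g_i^{-1}}n$. That component is nilpotent in $\fl_i$, and a central element of $\fl_i$ pairs to zero with nilpotents of $\fl_i$, so this term vanishes.

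For the central part, $\langle\sum_i s_i,z\rangle=0$ is exactly the first genericity condition, namely that the central projections of the $s_i$ sum to $0$.

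Hence $\sum_i\Ad_{g_i}(s_i)\perp V^\perp$, which gives the $p_i$. In fact this works for every generic $(P_1,\ldots,P_\ell)$-type tuple, not just for some tuple.

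\textbf{($\Leftarrow$).} Suppose $X_i:=\Ad_{g_ip_i}(s_i)$ satisfy $\sum_i X_i=0$, and let $S$ be a maximal torus of $\mathrm{Stab}_G(p)^\circ$. I must show $S\subseteq Z_G$. Choose a cocharacter $\lambda$ of $S$, and let $Q_\lambda=L\ltimes U_\lambda$ be the parabolic subgroup with $L=C_G(\lambda)$.

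Each $g_iP_ig_i^{-1}$ contains $S$, so each $X_i$ lies in a parabolic subalgebra normalised by $\lambda$. Conjugating $X_i$ inside $g_iP_ig_i^{-1}$ if needed (which replaces $p_i$ by another element of $P_i$ and keeps $X_i\in\Ad_G(s_i)$), I will arrange $X_i\in\Lie Q_\lambda$. Taking the limit $t\to0$ of $\Ad_{\lambda(t)}$, the $\fl$-components $Y_i$ then satisfy $\sum_i Y_i=0$. Each $Y_i$ is the image of a semisimple element of $\Lie Q_\lambda$ under the projection to $\fl$, so it is $Q_\lambda$-conjugate to $X_i$ and hence lies in $\Ad_G(s_i)\cap\fl$. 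Genericity forbids such a solution inside a proper Levi subalgebra, so $L=G$, i.e. $\lambda$ is central. Since this holds for every $\lambda$, $S\subseteq Z_G$ and $\mathrm{Stab}_G(p)^\circ/Z_G$ is unipotent.

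\textbf{Main obstacle.} The step I expect to be delicate is the conjugation in the reverse direction: ensuring the $X_i$ (or the choice of $p_i$) are compatible with $\lambda$, so that the limit preserves the equation $\sum_i X_i=0$. My first attempt is to note that the solution set of $\sum_i\Ad_{g_ip_i}(s_i)=0$ in $\prod_i P_i$ is $S$-stable, since $S$ fixes $p$ and the equation is $G$-equivariant. One can then pass to an $S$-fixed solution, using that the solution set is a torsor under the unipotent group $\{(u_i)\in\prod_i U_i\}$ cut out linearly and invoking Borel's fixed point theorem for the affine-space-type solution set. An $S$-fixed solution has each $X_i\in\fg^S=\fl$ directly, which makes the limit argument unnecessary.
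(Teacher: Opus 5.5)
Your argument is correct in substance. The ($\Rightarrow$) direction follows the paper's route, the ($\Leftarrow$) direction is genuinely different, and two of your justifications are wrong and need replacing.

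\textbf{Comparison with the paper.} For ($\Rightarrow$), the paper proves surjectivity of $\phi_{\mathcal{P}}$ by showing its dual map is injective (the proof of Theorem \ref{thm:extension}, adapting Lemma \ref{lem:dominant}). This is the same linear algebra as your statement that $V=\sum_i\Ad_{g_i}(\fn_i)$ has orthogonal complement $\bigcap_i\Ad_{g_i}(\fp_i)$, and that this complement consists of central-plus-nilpotent elements. Your observation $\Ad_{P_i}(s_i)=s_i+\fn_i$ reduces solvability to the single condition $\sum_i\Ad_{g_i}(s_i)\in V$. You also make explicit two points the paper leaves implicit: the nilpotent $\fl_i$-component pairs trivially with $s_i\in\frakz(\fl_i)$, and the centre is handled by $\sum_i s_i\in[\fg,\fg]$ rather than by reducing to semisimple $G$.

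For ($\Leftarrow$), the paper takes $t$ in the stabiliser with $C_G(t)$ a proper Levi and conjugates to the standard case. It then applies Bruhat decompositions in $G$ and in $L$, and argues via coroot spaces that $s_1+\Ad_{y_2x_2}(s_2)+\cdots+\Ad_{y_\ell x_\ell}(s_\ell)\in\ft\cap[\fl,\fl]$, contradicting Definition \ref{defe:genericelement}. You instead let a maximal torus $S$ of $\mathrm{Stab}_G(p)^\circ$ act on the affine space of solutions and take an $S$-fixed solution. That solution lies in $\Lie C_G(S)$, and Definition \ref{defe:genericoriginal} applied to the Levi $C_G(S)$ finishes. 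Your route is shorter and coordinate-free, treats Borels and parabolics uniformly, and avoids the coroot-space inclusion that the paper asserts with little proof.

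\textbf{First repair: smoothness of the stabiliser.} Your claim that $\bigcap_i g_iP_ig_i^{-1}$ contains a common maximal torus fails for $\ell\ge 3$. Three distinct Borels of $\GL_2$ meet only in the centre. In fact, in the absolutely indecomposable case a common maximal torus would be central, forcing $G$ to be a torus. So the smoothness argument collapses.

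You do not need $\Lie\mathrm{Stab}_G(p)=V^\perp$, only that every $y\in V^\perp$ has central semisimple part. Each $\Ad_{g_i}(\fp_i)$ is the Lie algebra of an algebraic subgroup, hence stable under taking semisimple parts, so $y_s\in V^\perp$. Then $y_s$ lies in $\Lie T_i$ for some maximal torus $T_i$ of $g_iP_ig_i^{-1}$. The Levi $M=C_G(y_s)$ contains $T_i$, so $Z(M)^\circ\subseteq T_i\subseteq g_iP_ig_i^{-1}$ for every $i$. Hence $Z(M)^\circ\subseteq\mathrm{Stab}_G(p)^\circ$, and absolute indecomposability gives $Z(M)^\circ\subseteq Z_G$. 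Therefore $M=C_G(Z(M)^\circ)=G$ and $y_s\in\frakz(\fg)$, which recovers your decomposition $y=y_s+y_n$ with $y_n\in V^\perp$ nilpotent. The paper makes the same group-to-Lie-algebra jump in Lemma \ref{lem:dominant} without comment.

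\textbf{Second repair: the fixed point.} Borel's fixed point theorem needs a complete variety, and a unipotent group can act on an affine space without fixed points. So it does not give your $S$-fixed solution. Use the torus directly instead.
\begin{itemize}
\item The solution set is $a+A_0$, where $A_0=\{(Y_i)\in\prod_i\Ad_{g_i}(\fn_i):\sum_iY_i=0\}$ is an $S$-stable subspace.
\item Write $a=\sum_\chi a_\chi$ in $S$-weight components. Then $(\chi(s)-1)a_\chi$ is the $\chi$-component of $s\cdot a-a\in A_0$, so $a_\chi\in A_0$ for every $\chi\neq 0$.
\item Hence $a_0\in a+A_0$, and $a_0$ is $S$-fixed.
\end{itemize}
This fixed-point argument should be the proof itself, not a fallback. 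Conjugating the $X_i$ separately destroys $\sum_iX_i=0$. Also, $\lim_{t\to0}\Ad_{\lambda(t)}(X_i)$ need not exist, since $\Ad_{g_i}(\fp_i)\not\subseteq\Lie Q_\lambda$ in general.
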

This provides an algebraic method to determine whether a given quasi-parabolic $G$-bundle is absolutely indecomposable.
    
 \begin{rem}
Note that such results are independent on the choice of $(s_1, \ldots , s_\ell)$ whenever this tuple satisfies the generic condition and $ (P_1, \ldots , P_\ell)$-type. 
\end{rem}


 \subsubsection{Next topics}
Our future goal is to compute the number of absolutely indecomposable parabolic $G$-bundles (up to isomorphism) fixed by the Frobenius morphism $F$ exactly. In \S\ref{s:analogueof5.7}, we give two ways to compute this number (when the base bundle is trivial), but this is not quite explicit. To compute this, we think that we need to use the method in \cite{Let16}, with constructing parabolic $G$-bundles carefully over a given base $G$-bundle. 

Furthermore, in Theorem 4.4.2 in \cite{Let16}, Letellier gets a result about the number of Higgs bundles by comparing Deligne-Lusztig characters of $\mathrm{GL}_n(\Fq)$ and  $\mathfrak{gl}_n(\Fq)$. In addition, the author also studied a relation between the multiplicity of irreducible characters of $\mathrm{GL}_n(\Fq)$ and the size of a quiver variety in \cite[Theorem 7.3.3]{letellier2013quiver}. To get such a result, the author also used Deligne-Lusztig characters of $\mathrm{GL}_n(\Fq)$ and  $\mathfrak{gl}_n(\Fq)$, cf. \cite[Theorem 6.9.1]{letellier2013quiver}. From such observation, we also try to study a relation between Deligne-Lusztig characters of $G^F$ and $\fg^F$, and we get the next result  in this paper about the multiplicity of irreducible characters of $G^F$ and the size of generic additive characters. We believe that this would be helpful to compute the number of absolutely indecomposable parabolic $G$-bundles.
 
 \subsection{Multiplicity of the tensor product of irreducible characters}
Let $\chi_1, \ldots , \chi_{\ell}$ be irreducible characters of \(G^F\) over $\mathbb{C}$. When we consider irreducible characters of a finite group, a natural question is to determine the value of the inner product
 \begin{equation}\label{eq:def-multi}
 \langle \chi_1 \otimes \ldots  \otimes \chi_{\ell}, 1\rangle.
 \end{equation}
 This is related to the ring structure of the space of finite dimensional characters of $G^F$.
 To the best of our knowledge, there are relatively few known results concerning this inner product of irreducible characters of finite reductive groups. One notable result, due to Heide, Saxl, Tiep, and Zalesski~\cite[Theorem~1.2]{heide2013conjugacy}, states that
$ \langle St \otimes St \otimes \chi,\,1\rangle \neq 0,$
 for almost all simple Lie groups \( G \) and any irreducible character $\chi$ of $G^F$, where \(St\) denotes the Steinberg character of $G^F$.
 
  When we consider \( G = \mathrm{GL}_n \),  results of the multiplicity  have been extensively studied by Letellier in~\cite{letellier2013tensor, letellier2013quiver}. In particular, Letellier computed this inner product explicitly   involving quiver varieties.
 Motivated by these work, we have identified a new connection between  additive character varieties (appearing in \cite{KNWG}) and the multiplicity of irreducible characters of finite reductive groups.
  In this paper, we focus on the following characters, which is a generalised version of the notion of \emph{generic character} in \cite[\S1.4]{hausel2011arithmetic} for arbitrary reductive groups.

  \begin{defe}\label{defe:genericchar}
  	Let us assume that $\theta_1, \theta_2,\ldots ,  \theta_{\ell}\in \widehat{T^F}$ are in general position.\footnote{Here, in general position means that $g.\theta \neq \theta$ for all $g\in N_G(T)^F/T^F$, cf. \cite[Corollary 2.2.9]{geck2020character}.}  We say that the tuple $(\theta_1, \theta_2, \ldots ,  \theta_{\ell})$ is \emph{generic} if $\prod_{i=1}^{\ell} (w_i\cdot \theta_i)|_{Z_L^F}$ is non-trivial on the centre $Z_L$ of $L$  for any proper Levi subgroup $L$ and $\prod_{i=1}^{\ell} (w_i\cdot \theta_i)|_{Z_E^F}$ is trivial on the centre $Z_E$ for any isolated pseudo-Levi subgroups $E$ for all $w_i\in W$.    \end{defe}
	 We call the centraliser subgroup of a semisimple element is an \emph{isolated pseudo-Levi subgroup} when this is not in any proper Levi subgroup. For further details, please see \S\ref{ss:root-system}--\ref{s:Reductive}.  
	\begin{assu}
	In this paper, we assume that the characteristic $p$ of $k$ is very good, and the size $q$ is sufficiently large. This  guarantees the existence of generic characters,  cf. \cite[\S8.4]{carter1985finite}.  
\end{assu}
	 
For each $\theta_i$ in Definition \ref{defe:genericchar}, we have the Deligne-Lusztig character $\chi_{\theta_i}=R_T^G(\theta_i)$. Note that $R_T^G(\theta_i)=\mathrm{Ind}_{T^F}^{G^F}(\theta_i)$ since $T$ is maximally split, and it is well-known that this is an irreducible character of $G^F$ from \cite[Corollary 2.2.9]{geck2020character}.
 Now, let us introduce an additive character variety from \cite[\S1.3]{KNWG}.
  \begin{defe}\label{defe:genericadditivechara}
  	Let us consider a tuple $\mathcal{O}\colonequals (O_1, \ldots, O_{\ell})$ of adjoint orbits in $\mathfrak{g}$ under $G$-action. Then the \emph{additive character variety corresponding to $\mathcal{O}$} is the following:
  	\[
  	\mathcal{X}_{\fg, \mathcal{O} } \colonequals \left\{(X_1, Y_1, \ldots , X_g,Y_g,Z_1, \ldots , Z_{\ell} ) \in \fg^{2g}\times \prod_{i=1}^{\ell} O_i  \,\middle\vert\, \sum_{i=1}^g[X_i,Y_i]+\sum_{j=1}^{\ell} Z_j=0\right\}/\!\!/G,
  	\]
  	where $/\!\!/$ means the GIT quotient.
  \end{defe}
Note that we always consider generic regular semisimple adjoints orbits in this paper, so we drop to denote $\mathcal{O}$, i.e., we use $\mathcal{X}_\fg$.
  
  \subsubsection{Result} 
  Let us assume that $2g+\ell\geq 3$ for $g\in \mathbb{N}\cup \{0\}$ and $\ell \in \mathbb{N}$, and a function $\Lambda \,:\, G\rightarrow \mathbb{C}$  be defined by $x\rightarrow q^{g \dim(C_G(x))}$, where $C_G(x)$ is the centraliser subgroup of $x$ in $G$.
 Then with the generic condition (Definition \ref{defe:genericoriginal}) on $(O_1, \ldots , O_{\ell})$, we have
a following geometric interpretation of the multiplicity     $    \langle \Lambda \otimes \chi_{\theta_1}\otimes \ldots \otimes \chi_{\theta_{\ell}},1 \rangle$ as follows.

\begin{thm}[Theorem \ref{thm:main}] \label{thm:maininintro}
    Let us assume that $(\theta_1, \ldots,\theta_{\ell})$ is a tuple of generic linear characters of $G^F$ as in Definition \ref{defe:genericchar}. Then the   multiplicity 
    $    \langle \Lambda \otimes \chi_{\theta_1}\otimes \ldots \otimes \chi_{\theta_{\ell}},1 \rangle$
 is given by sum of the number of points over finite fields of the generic additive character varieties associated with isolated pseudo-Levi subgroups of $G$.
\end{thm}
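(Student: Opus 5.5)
The plan is to follow the Frobenius-type counting argument of Hausel--Letellier--Rodriguez-Villegas and Letellier, adapted to arbitrary $G$. The first step expands the multiplicity as an average over the group:
\[
\langle \Lambda\otimes\chi_{\theta_1}\otimes\cdots\otimes\chi_{\theta_\ell},1\rangle=\frac{1}{|G^F|}\sum_{x\in G^F} q^{g\dim C_G(x)}\prod_{i=1}^{\ell}\chi_{\theta_i}(x).
\]
Since $\chi_{\theta_i}=\Ind_{T^F}^{G^F}\theta_i$, I would use the Deligne--Lusztig character formula. The value at $x=su$ (Jordan decomposition) is a sum over $G^F$-conjugates of $s$ lying in $T^F$, weighted by Green functions $Q^{C_G(s)^\circ}_{T}(u)$ and by $\theta_i(s)$. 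I would then stratify the sum according to the $G^F$-conjugacy class of the pseudo-Levi subgroup $M=C_G(s)^\circ$, with $s\in T^F$. On each stratum the sum over $s$ becomes a sum over $s\in Z_M^{F}$ regular in $M$, and the $\theta$-dependence collects into $\sum_{s}\prod_i (w_i\cdot\theta_i)(s)$ for tuples $(w_i)\in (W_G/W_M)^\ell$.

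The second step uses genericity (Definition \ref{defe:genericchar}). I would run Möbius inversion on the poset of pseudo-Levi subgroups containing $T$, passing from sums over regular elements of $Z_M$ to sums over all of $Z_M^F$. The character sum $\sum_{z\in Z_L^F}\prod_i(w_i\cdot\theta_i)(z)$ vanishes whenever $L$ is a proper Levi subgroup, because the product character is then non-trivial. It equals $|Z_E^F|$ when $E$ is an isolated pseudo-Levi subgroup, because the product character is then trivial. So only the strata attached to isolated pseudo-Levi subgroups $E$ survive, and each contributes a rational number independent of $\theta$. That number is an explicit sum over unipotent elements of $E^F$ of $q^{g\dim C_E(u)}\prod_i Q^E_{T}(u)$, up to a constant of the form $|Z_E^F|/|W_E|$-type factors.

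The third step matches each surviving term with a point count. I would use the Fourier-transform formula of \cite{KNWG} for $\#\mathcal X_{\fg}(\Fq)$ with generic regular semisimple orbits. That formula expresses the count of the variety attached to $\mathfrak e=\Lie E$ as $\frac{|Z_{\mathfrak e}^F|}{|E^F|}\,q^{\ast}\sum_{X\in\mathfrak e^F} q^{g\dim C_E(X)}\prod_i\mathcal F(1_{O_i})(X)$. Here the Fourier transforms of regular semisimple orbits are Lie-algebra Deligne--Lusztig (Green) functions. By Springer's isomorphism, valid since $p$ is very good, the nilpotent part of that sum equals the unipotent sum from the second step. The genericity of the orbits kills every non-nilpotent contribution in the Lie algebra in the same way that genericity of the $\theta_i$ does on the group side. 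Summing over the $G^F$-classes of isolated pseudo-Levi subgroups then gives the theorem, with the normalizing powers of $q$ and orders of Weyl groups absorbed into the definition of ``sum of numbers of points''.

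The main obstacle I expect is the bookkeeping in the second step. First, one must check that the Möbius inversion on pseudo-Levi subgroups (which do not form as clean a lattice as Levis) really isolates exactly the isolated ones with the stated coefficients. Second, one must deal with non-connected centralizers when $C_G(s)\neq C_G(s)^\circ$, although the simply connected derived group assumption should help here. My first attempt would handle this by working with the partition of $T$ by the subsets $\{s: C_G(s)^\circ=M\}$ and verifying that the inversion coefficients cancel on proper Levis, using that $Z_L^\circ$ is a torus for Levi $L$.
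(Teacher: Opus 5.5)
Your overall strategy (Frobenius-type expansion, Green-function character values, M\"obius inversion with genericity, comparison with the count in \cite{KNWG}) is the paper's. However, the key claim in your second and third steps is false, so there is a genuine gap. M\"obius inversion does \emph{not} leave only the isolated strata, and genericity of the orbits does \emph{not} kill the non-nilpotent part of the Lie-algebra sum. Inverting over $\mathcal{E}(G)$, the $\theta$-sum over $s\in T^F$ with $\Phi_{C_G(s)}=\Phi_L$ equals $\sum_{E\supseteq L\text{ isolated}}\mu_{\mathcal{E}(G)}(\Phi_L,\Phi_E)|Z_E^F|$, and this is in general nonzero for proper Levi $L$ too.

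Take $G=\SL_2$, $g=0$, $\ell=3$, where $G$ is the only isolated pseudo-Levi subgroup. The split regular stratum gives $\sum_{s\in T^F\setminus\{\pm1\}}\prod_i(w_i\cdot\theta_i)(s)=0-|Z_G^F|=-2$ for every $(w_i)$. Keeping this stratum gives multiplicity $2$; keeping only central-times-unipotent elements, as your plan does, gives $2(q+3)/(q-1)$. Symmetrically, in \eqref{eq:additive} applied to $\mathfrak e$, every Levi type $([\Phi_L],[n])$ of $E$ contributes with weight $\mu_{\mathcal{L}(E)}(\Phi_L,\Phi_E)$, not only $L=E$. For $\mathfrak{sl}_2$ the exponential sum over nonzero $z\in\ft^F$ is $-1$, not $0$. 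So Springer's isomorphism, matching unipotent elements of $E$ with nilpotent elements of $\mathfrak e$, only handles the $L=E$ terms. Even there, the group-side coefficient is $N(E)=|\{s\in G^F: C_G(s)=E\}|$, not $|Z_E^F|$.

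What is missing is an identity matching the two families of weights stratum by stratum:
$$\sum_{E\text{ isolated}}\mu_{\mathcal{E}(G)}(\Phi_L,\Phi_E)|Z_E^F|=\sum_{E\text{ isolated}}\mu_{\mathcal{L}(E)}(\Phi_L,\Phi_E)N(E),$$
with $\mu_{\mathcal{L}(E)}(\Phi_L,\Phi_E)=0$ when $\Phi_L$ is not Levi in $\Phi_E$. The paper proves it in four moves:
\begin{enumerate}
\item write $|Z_E^F|=\sum_{K\supseteq E}N(K)$ and exchange the sums;
\item restrict $\mu_{\mathcal{E}(G)}$ to the convex subposet $\mathcal{E}(K)$ (Lemma \ref{lem:restriction});
\item use the Galois connection for the closure $\Psi\mapsto\overline{\Psi}$ from pseudo-Levi to Levi subsystems (Lemma \ref{lem:galoisconnection}), which collapses $\sum_{E\subseteq K\text{ isolated}}\mu_{\mathcal{E}(K)}(\Phi_L,\Phi_E)$ to $\mu_{\mathcal{L}(K)}(\Phi_L,\Phi_K)$;
\item invoke Lemma \ref{lem:levi-E-pseudo-levi-G}: every Levi subsystem of $\Phi_E$ occurs as some $\Phi_{C_G(t)}$ with $t\in T$, so each Lie-side stratum has a group-side partner.
\end{enumerate}
Only after this rewriting does the full type sum split, for each isolated $E$, into exactly the type sum of \eqref{eq:additive} for $\mathfrak e$. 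The resulting prefactor is $N(E)|W|^{\ell-1}/(q^{\gamma_E}|Z_E^F||W_E|^{\ell-1})$, not a ``$|Z_E^F|/|W_E|$-type'' constant. In the $\Sp_4$ example, the ratio $N(E)/|Z_E^F|=2/4$ is essential to get the right answer.
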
 
The explicit formula is given in Equation \eqref{eq:mainresult}, and furthermore, we have the following conjecture.
\begin{conj}
Every coefficient of the multiplicity $      \langle \Lambda \otimes \chi_{\theta_1}\otimes \ldots \otimes \chi_{\theta_{\ell}},1 \rangle$ is non-negative coefficients (over $q$). 
\end{conj}
When $G=\GL_n$, this is true from the purity of the quiver variety. From Equation \eqref{eq:mainresult}, we also guess that this is true if every generic additive character variety is pure, cf. \cite[Conjecture 9]{KNWG}.

\begin{rem}\label{rem:helpful}
We believe this computation would be helpful to compute the size of some absolutely indecomposable parabolic $G$-Higgs bundles, which is related to \cite[Theorem 1.2.5]{Let16} when $G=\GL_n$. In other words, we think that our computation of the multiplicity has a similar vein with \cite[Theorem 4.3.3]{Let16}.
 \end{rem}

 \subsection{Structure of the paper}
 In this paper, we give basic materials in \S\ref{s:prelim}. We prove Theorem \ref{thm:main-abs} in \S\ref{s:frombtoa}--\ref{ss:converse-thm-1} and its extension to the parabolic case in \S\ref{ss:generisation}. We consider the number of absolutely indecomposable elements in \S\ref{s:analogueof5.7}. We give the multiplicity result in \S\ref{s:regularsemisimplechar}.  Furthermore, we illustrate an example in Appendix \ref{s:example} about Theorem \ref{thm:main-abs}  over $\mathrm{SO}_5$.

 	\section{Preliminary} \label{s:prelim} 
 	In this section, we recall basic facts and notations about root systems, reductive groups and a generic regular semisimple tuple, which  illustrated in \cite[\S2]{KNWG}. For convenience, we recall some materials here briefly.    	
 	
 	\subsection{Root systems} \label{ss:root-system}
 	Let $\Phi$ be a (reduced crystallographic) root system in a finite dimensional Euclidean vector space $(V, (.\ ,.))$.  We assume that $\Phi$ spans $V$ and fix a base $\Delta$ of $\Phi$. Then $\Delta$ is a basis of the vector space $V$. Let $W$ be the Weyl group generated by the simple reflections $s_\alpha$, $\alpha \in \Delta$. 
 	
 	\subsubsection{Subsystems} A root subsystem of $\Phi$ is a subset $\Psi\subseteq \Phi$ which is itself a root system. Equivalently, it satisfies $s_\alpha(\beta)\in \Psi$ for all $\alpha, \beta\in \Psi$. A subsystem $\Psi\subseteq \Phi$ is called \emph{closed} if 
 $	\alpha, \beta\in \Psi \quad \textrm{and}\quad  \alpha+\beta\in \Phi$ implies $ \alpha+\beta\in \Psi. $
 	Given a subset $S\subseteq \Phi$,  let $\langle S\rangle_{\mathbb{Z}}$ denote the subgroup of $V$ generated by $S$.  The subsystem of $\Phi$ generated by $S$ is defined by 
 	\[
 	\langle S \rangle:=\langle S\rangle_{\mathbb{Z}} \cap \Phi. 
 	\]
 	It is easy to check that $ \langle S \rangle$ is a closed subsystem of $\Phi$.

 	A \emph{Levi subsystem}  of $\Phi$ is a subsystem of the form $\Phi \cap U$ where $U\subseteq V$ is a subspace. 
 	A Levi subsystem of $\Phi$ is closed in $\Phi$. Note that every Levi subsystem of $\Phi$ is of the form $w\cdot \langle S\rangle$, where $S$ is a subset of $\Delta$ and $w\in W$. In particular, if $w=1$, then $\langle S \rangle$ is called a standard Levi subsystem. 
 	A subset $S\subseteq \Phi$ is called \emph{isolated in $\Phi$} if $|S|=|\Delta|$, and this implies that $\langle S \rangle$ is not contained in a proper Levi subsystem of $\Phi$.

 	\subsubsection{Pseudo-Levi subsystems} Assume $\Phi$ is irreducible. Let $\theta$ be the highest root of $\Phi$, and $\widetilde \Delta:=\Delta \sqcup \{-\theta\}$. A pseudo-Levi subsystem is a subsystem of $\Phi$ of the form $w\cdot \langle S\rangle $, where $w\in W$ and $S$ is a subset of $\widetilde \Delta$. If $w=1$, then $\langle S\rangle$ is called a standard pseudo-Levi subsystem. 
	This story can be extended to non-irreducible $\Phi$ easily.

 	\subsubsection{Isolated pseudo-Levi subsystem} 
 	Let $\Psi$ be a pseudo-Levi subsystem of an irreducible root system $\Phi$.  Then we have the following equivalence: $\Psi$ is isolated in $\Phi$ if and only if 
 		  $\Psi=w\cdot \langle S\rangle $ where $S\subset \tilde{\Delta}$ has the same size as $\Delta$.  
 	In other words, up to $W$-conjugation, isolated pseudo-Levis are those which are obtained by removing just a single element of $\widetilde{\Delta}$ (equivalently, a single node from the extended Dynkin diagram).

 	\subsection{Reductive groups} \label{s:Reductive}

 	\subsubsection{Levi  and pseudo-Levi subgroups}\label{sss:levisubgrous} A Levi subgroup of $G$ is defined to be the centraliser $L=C_G(S)$ of a torus $S\subseteq G$. If $S\subseteq T$, then $L$ is said to be a standard Levi subgroup. In this case, the root system of $L$ is a Levi subsystem of $\Phi$.
 	If $t\in \fg$ is a semisimple element, then the connected centraliser $C_{G}(t) $ is a Levi subgroup of $G$. Moreover, every Levi subgroup of $G$ arises in this manner. Note that if the characteristic of $k$ is very good for $G$, then $C_G(t)$ is connected \cite[Theorem 3.14]{Steinberg75}.

	Let $t\in G$ be a semisimple element. Recall that if the derived subgroup $[G,G]$ of $G$ is simply connected, then the centraliser subgroup $C_G(t)$ of a semisimple element $t\in G$ is connected. The connected centraliser $C_G(t) $ is called a \emph{pseudo-Levi subgroup} of $G$, and if $t\in T$, then $C_G(t) $ is called a standard pseudo-Levi subgroup of $G$. We call a pseudo-Levi subgroup is isolated if its root subsystem is isolated in $\Phi$. Note that an isolated pseudo-Levi subgroup is not in proper parabolic subgroup. For a pseudo-Levi subgroup $E$ of $G$, we denote the root subsystem of $E$ in $\Phi$ by $\Phi_E$.
 	
	\subsubsection{Revisit of pseudo-Levi subsystem}
	 If $t\in T$, then the root system of $C_G(t) $ is given by 
${\Phi}_t:=\{{\alpha} \in {\Phi} \, |\, {\alpha}(t)=1\}.$
This is a pseudo-Levi subsystem of $\Phi$. Every pseudo-Levi subsystem arises in this manner \cite{Deriziotis}. From this observation, we have the following useful lemma.
\begin{lem}\label{lem:levi-E-pseudo-levi-G}
 
For any Levi subsystem $\Psi$ of an isolated pseudo-Levi subsystem $\Phi_E$, there is $t\in T$ such that $\Phi_t  = \Psi$.
\end{lem}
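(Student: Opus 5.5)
The plan is to realise $\Psi$ by perturbing an element that already realises $\Phi_E$, moving it inside a subtorus attached to $\Psi$. By the result of \cite{Deriziotis} recalled above, every pseudo-Levi subsystem is of the form $\Phi_{t_0}$. So first fix $t_0\in T$ with $\Phi_{t_0}=\Phi_E$, i.e.\ $\alpha(t_0)=1$ exactly for $\alpha\in\Phi_E$. The isolatedness of $\Phi_E$ will not actually be needed; only the fact that $\Phi_E$ is of the form $\Phi_{t_0}$ is used.

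Next, since $\Psi$ is a Levi subsystem of $\Phi_E$, write $\Psi=\Phi_E\cap U$ for a subspace $U\subseteq V$. Define the subtorus
\[
S:=\Bigl(\bigcap_{\alpha\in\Psi}\ker\alpha\Bigr)^\circ\subseteq T .
\]
The characters of $T$ that are trivial on $S$ form a lattice which is contained in $\QQ\Psi$. This is because the character lattice of $T/S$ has finite index over $\ZZ\Psi$ modulo torsion considerations, and these do not affect $\QQ$-spans. Hence, if $\alpha\in\Phi_E$ satisfies $\alpha|_S=1$, then $\alpha\in \QQ\Psi\cap\Phi_E\subseteq U\cap\Phi_E=\Psi$. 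Conversely, every $\alpha\in\Psi$ is trivial on $S$ by construction. Thus
\[
\Psi=\{\alpha\in\Phi_E : \alpha|_S=1\}.
\]

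Now set $t=t_0s$ with $s\in S$ chosen generically, and check that $\Phi_t=\Psi$ root by root.
\begin{enumerate}
\item For $\alpha\in\Psi$ we have $\alpha(t)=\alpha(t_0)\alpha(s)=1\cdot 1=1$.
\item For $\alpha\in\Phi_E\setminus\Psi$, the restriction $\alpha|_S$ is a nontrivial character, so $\ker(\alpha|_S)$ is a proper closed subset of $S$. Avoiding it gives $\alpha(t)=\alpha(s)\neq1$.
\item For $\alpha\notin\Phi_E$ with $\alpha|_S=1$, we get $\alpha(t)=\alpha(t_0)\neq 1$ automatically.
\item For $\alpha\notin\Phi_E$ with $\alpha|_S\neq1$, the locus $\{s\in S:\alpha(s)=\alpha(t_0)^{-1}\}$ is a proper closed subset of $S$, being a coset of a proper subgroup. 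Avoiding it gives $\alpha(t)\neq 1$.
\end{enumerate}
There are finitely many roots, and $S$ is irreducible, so a single $s$ avoids all of these proper closed subsets simultaneously. For this $t$ we get $\Phi_t=\Psi$.

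The main point requiring care is the lattice statement identifying the roots of $\Phi_E$ that are trivial on $S$ with $\Psi$. One needs that passing to the identity component and any torsion or characteristic issues do not enlarge this set. This is handled by working with $\QQ$-spans together with the hypothesis that $\Psi$ is cut out by a subspace $U$. The remaining genericity argument is routine.
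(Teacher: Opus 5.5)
Your argument is correct, but it takes a different route from the paper.

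\textbf{The paper's route.} The paper reduces to $G$ simple and writes $\Psi=w\cdot\langle S\rangle$ with $S$ inside the base $\Delta(E)\subset\tilde{\Delta}$ of $\Phi_E$. It then builds $t$ directly by prescribing its values on the extended simple roots: $\alpha(t)=1$ for $\alpha\in S$ and $\beta(t)\neq 1$ for $\beta\in\tilde{\Delta}\setminus S$. This is possible because at least two elements of $\tilde{\Delta}$ lie outside $S$, so the single multiplicative relation among the values of $\tilde{\Delta}$ can still be satisfied with all of them $\neq 1$.

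\textbf{Your route.} You take $t_0$ with $\Phi_{t_0}=\Phi_E$ and perturb it by a generic element of $S=(\bigcap_{\alpha\in\Psi}\ker\alpha)^\circ$, which is the connected centre of the Levi subgroup of $E$ with root system $\Psi$. The lattice step you flag is fine. Since $\dim S=\dim T-\mathrm{rank}\,\ZZ\Psi$ in any characteristic, the characters trivial on $S$ form a saturated lattice of that rank containing $\ZZ\Psi$, namely $X^*(T)\cap\QQ\Psi$. The subspace description $\Psi=\Phi_E\cap U$ then gives $\Psi=\{\alpha\in\Phi_E:\alpha|_S=1\}$.

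\textbf{Comparison.} Your route is uniform: it needs no reduction to simple factors and no extended Dynkin diagram. It also shows, correctly, that isolatedness of $\Phi_E$ is not needed. It makes explicit a genericity step that the paper's sketch leaves implicit. Prescribing values on $\tilde{\Delta}$ alone does not stop other roots from taking the value $1$. For example, in $B_2$ with $S=\emptyset$, taking $\alpha_1(t)=\alpha_2(t)=-1$ gives $\theta(t)=-1$ but $(\alpha_1+\alpha_2)(t)=1$. So $t$ must be chosen generically, e.g.\ in the interior of the corresponding alcove face, and the conjugation by $w$ must also be absorbed. In exchange, the paper's approach is more explicit: it describes $t$ by its coordinates on $\tilde{\Delta}$, in the spirit of the extended-Dynkin-diagram (Borel--de Siebenthal) description of pseudo-Levi subsystems.
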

 	 \begin{proof}It is sufficient to prove the case when $G$ is simple. 
	  Let $S\subset \Delta(E)$ such that $\Psi=w\cdot \langle S\rangle  $. Our goal is to find $t\in T$ such that $\alpha(t)=1$ for $\alpha \in S$ and $\beta(t)\neq1 $ for $\beta\in \tilde{\Delta}\setminus S$. This is possible since $|\tilde{\Delta}\setminus S|\geq 2$.
Therefore, we are done.
	 \end{proof}

 	 \subsection{Generic semisimple tuple}\label{s:genericdefe} Let us define the notion of a \emph{generic semisimple tuple} in the Lie algebra $\fg$ of $G$. This serves as an additive analogue of the concept introduced in \cite[\S3]{KNWG}, where the authors primarily focused on the group-theoretic setting. For convenience, we present here the corresponding additive version.

 	 	\begin{defe}\label{defe:genericoriginal}
 	 		The tuple $O=(O_1,\ldots, O_{\ell})$ of semisimple adjoint orbits in $\fg$ is called \emph{generic} if whenever there exists a proper parabolic subalgebra $\mathfrak{p}\subset \fg$ and $x_i\in \mathfrak{p}\cap O_i$,  for all $i=1,\ldots,\ell$, then  $\sum_{i=1}^{\ell} x_i\notin [\mathfrak{p},\mathfrak{p}]$. 
 	 	\end{defe}

 	 	\subsubsection{Standard parabolic subalgebras} It is enough to check the generic condition for \emph{standard} (proper) parabolic  subalgebras. Here, standard parabolic subalgebras are parabolic subalgebras containing a chosen Borel subalgebras $\mathfrak{b}\subset \fg$. Indeed, suppose $\fp$ is an arbitrary parabolic and choose $g\in G$ such that $\Ad_g(\fp)$ is standard. Then $x_i\in\fp\cap O_i$ if and only if $\Ad_g(x_i)\in \Ad_g(\fp)\cap O_i$, and  $\sum_{i=1}^{\ell} x_i\in [\fp,\fp]$ if and only if $\Ad_g(\sum_{i=1}^{\ell} x_i)= \sum_{i=1}^{\ell} \Ad_g(x_i)\in \Ad_g([\fp,\fp])=[\Ad_g(\fp), \Ad_g(\fp)]$.

 	 	\subsubsection{Levi subalgebras instead of parabolic subalgebras}\label{sss:algebra-parabolic-generic}   We can check genericity using  Levi subalgebras instead of parabolics for semisimple adjoint orbits. In other words, one can check that a tuple of semisimple adjoint orbits  $O=(O_1,\ldots,O_{\ell})$ is generic if and only if whenever there exists a proper standard Levi subalgebra $\fl\subset \fg$ and $s_i\in \fl\cap O_i$,  for $i=1,\ldots,\ell$, then  $\sum_{i=1}^{\ell} s_i\notin [\fl,\fl]$. This comes from the fact that if $l\in \fl$, then $l\in [\fl,\fl]$ if and only if $l\in [\fp,\fp]$.

 	 	\begin{defe}\label{defe:genericelement} A tuple $ (s_1,\ldots,s_{\ell})\in \ft^{\ell}$ is called to be \emph{generic} if for every $w=(w_1,\ldots, w_{\ell}) \in W^{\ell}$ and every proper standard Levi subalgebra $\mathfrak{l}\subset \fg$, we have $\sum_{i=1}^{\ell} w_i\cdot s_i \notin \ft\cap [\mathfrak{l},\mathfrak{l}]$. 
 	 	\end{defe} 
 	 	
 	 	Let $O_i:= G\cdot s_i$ be the adjoint orbit corresponding to $s_i$. Then we can give a useful method to check whether a given tuple $(O_1, \ldots , O_{\ell})$ is generic or not.
 	 	
 	 	\begin{lem}\label{lem:equivalence}  The element $ (s_1,\ldots,s_{\ell})\in \ft^{\ell}$ is generic if and only if the tuple of adjoint orbits $ (O_1,\ldots,O_{\ell})$ is generic. 
 	 	\end{lem}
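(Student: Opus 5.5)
We prove both implications by reducing to the Levi-subalgebra criterion of \S\ref{sss:algebra-parabolic-generic}, together with two standard facts. First, every semisimple element of $\fg$ is $G$-conjugate into $\ft$. Second, any two elements of $\ft$ that are $G$-conjugate are already $W$-conjugate. By \S\ref{sss:algebra-parabolic-generic}, the tuple $(O_1,\ldots,O_\ell)$ is generic if and only if, for every proper standard Levi subalgebra $\fl$ and every choice $x_i\in \fl\cap O_i$, we have $\sum_i x_i\notin[\fl,\fl]$.

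For ($\Leftarrow$), assume $(O_1,\ldots,O_\ell)$ is generic. Take $w\in W^\ell$ and a proper standard Levi subalgebra $\fl$, and put $x_i:=w_i\cdot s_i$. Each $x_i$ lies in $\ft\subset\fl$ and in $O_i$. Genericity of the orbits then gives $\sum_i w_i\cdot s_i\notin[\fl,\fl]$, hence in particular $\sum_i w_i\cdot s_i\notin \ft\cap[\fl,\fl]$. This is exactly Definition \ref{defe:genericelement}.

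For ($\Rightarrow$), assume $(s_1,\ldots,s_\ell)$ is generic. Suppose, for a contradiction, that there are a proper standard Levi $\fl=\Lie(L)$ with $L\supseteq T$ and elements $x_i\in\fl\cap O_i$ with $\sum_i x_i\in[\fl,\fl]$. The $x_i$ need not lie in $\ft$, so we project to the centre. Write $\fl=\frakz(\fl)\oplus[\fl,\fl]$; this decomposition uses that $p$ is very good, and $\frakz(\fl)\subseteq\ft$. Let $\pi\colon\fl\to\frakz(\fl)$ be the projection. The condition $\sum_i x_i\in[\fl,\fl]$ is then equivalent to $\sum_i\pi(x_i)=0$.

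Now fix $i$. The element $x_i$ is semisimple in $\fl$, so it is $L$-conjugate to some $t_i\in\ft$. Since $\pi$ is $L$-invariant (the adjoint action of $L$ is trivial on $\frakz(\fl)$ and preserves $[\fl,\fl]$), we get $\pi(x_i)=\pi(t_i)$. Moreover $t_i\in O_i\cap\ft$, and $O_i\cap\ft=W\cdot s_i$ by the second standard fact, so $t_i=w_i\cdot s_i$ for some $w_i\in W$. Therefore $\sum_i w_i\cdot s_i\in\ft$ and $\pi\bigl(\sum_i w_i\cdot s_i\bigr)=\sum_i\pi(x_i)=0$. This means $\sum_i w_i\cdot s_i\in\ft\cap[\fl,\fl]$, contradicting Definition \ref{defe:genericelement}.

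The main obstacle is the step where $x_i$ is replaced by $t_i$. It needs two inputs. The first is the decomposition $\fl=\frakz(\fl)\oplus[\fl,\fl]$ with $\ft\cap[\fl,\fl]$ complementary to $\frakz(\fl)$ in $\ft$; we plan to justify this by the nondegeneracy of a suitable invariant form, or by the fact that the torsion primes of $L$ are excluded when $p$ is very good. The second is the $W$-conjugacy of $G$-conjugate elements of $\ft$, for which we plan to cite \cite[Theorem 3.14]{Steinberg75}. Everything else in the argument is formal.
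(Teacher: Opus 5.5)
Your strategy is the paper's: conjugate each \(x_i\) inside \(L\) to some \(t_i\in\ft\), use that \(G\)-conjugate elements of \(\ft\) are \(W\)-conjugate, and compare \(\sum_i x_i\) with \(\sum_i w_i\cdot s_i\) modulo \([\fl,\fl]\). The one place you deviate is also the step that fails. You implement ``modulo \([\fl,\fl]\)'' through a projection \(\pi\colon\fl\to\frakz(\fl)\), which requires \(\fl=\frakz(\fl)\oplus[\fl,\fl]\), and this splitting does not follow from \(p\) being very good for \(G\).

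For a counterexample, take \(G=\GL_3\) in characteristic \(2\); this is very good for type \(A_2\), and \([G,G]=\SL_3\) is simply connected. Take the standard Levi subalgebra \(\fl=\gl_2\oplus\gl_1\). Then \(\frakz(\fl)=\{\diag(a,a,b)\}\) and \([\fl,\fl]=\sl_2\oplus 0\), and \(\diag(1,1,0)\) lies in both. So \(\frakz(\fl)+[\fl,\fl]\) is only \(4\)-dimensional inside the \(5\)-dimensional \(\fl\), and \(\pi\) does not exist. Neither of your proposed justifications helps. The trace form is non-degenerate on \(\fl\), but its restriction to \(\frakz(\fl)\) is \((a,b),(a',b')\mapsto 2aa'+bb'=bb'\), which is degenerate. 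More generally, when \([G,G]\) is simply connected, \(\frakz(\fl)\cap[\fl,\fl]\neq 0\) whenever \(p\) divides the determinant of the Cartan matrix of \(\Phi_L\). Very goodness for \(G\) does not exclude this.

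The repair is to avoid choosing a complement and work in the quotient \(\fl/[\fl,\fl]\). What you need is that the quotient map is constant on \(L\)-orbits, i.e.\ \(\Ad_l(y)-y\in[\fl,\fl]\) for \(l\in L\) and \(y\in\fl\). This is exactly the fact the paper quotes from Borel, applied to \(L\), and the paper's proof rests on it. With it, \(\sum_i w_i\cdot s_i=\sum_i t_i\equiv\sum_i x_i\equiv 0 \pmod{[\fl,\fl]}\). Hence \(\sum_i w_i\cdot s_i\in\ft\cap[\fl,\fl]\), contradicting Definition~\ref{defe:genericelement}. The projection, the inclusion \(\frakz(\fl)\subseteq\ft\), and the triviality of the \(L\)-action on \(\frakz(\fl)\) then become unnecessary.

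Your (\(\Leftarrow\)) direction is fine as written. Also, the \(W\)-conjugacy does not need Steinberg's connectedness theorem. If \(\Ad_g(t)=t'\) with \(t,t'\in\ft\), then \(T\) and \(gTg^{-1}\) are maximal tori of \(C_G(t')^\circ\), hence conjugate by some \(c\) there. Then \(n=cg\in N_G(T)\) satisfies \(\Ad_n(t)=t'\).
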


 	 	\begin{proof} It is clear that if $(O_1,\ldots,O_{\ell})$ is generic then so is $(s_1,\ldots,s_{\ell})$. 
 	 		For the converse, let $ (x_1,\ldots,x_{\ell}) \in \prod_{i=1}^\ell O_i $ and $\fl\subset \fg$ a proper standard Levi subalgebra containing $x_1,\ldots,x_{\ell}$. To prove this, it is enough to show that $\sum_{i=1}^{\ell} x_i\notin [\fl,\fl]$. Note that $x_i=\Ad_{l_i}(t_i)$ for some $l_i\in L$ and $t_i\in \ft$. Since $t_i$ and $s_i$ are elements of $\ft$ which are $G$-adjugate; thus, they must actually be adjugate under $W$. This means that there exists $w_i\in W$ such that $t_i=w_i\cdot s_i$. To prove our goal, let us assume that $\sum_{i=1}^{\ell} x_i\in [\fl,\fl]$. Then with the fact that $\Ad_g(x)-x\in [\fg,\fg]$ for all $g\in G$ and $x\in \fg$ (cf. \cite[Chapter I, 3.16 (a)]{borel}), we have that 
			\[
\begin{split}	\sum_{i=1}^{\ell} x_i-\sum_{i=1}^{\ell} (\Ad_{l_i} (t_i)-t_i)\in [\fl,\fl]
\iff  \sum_{i=1}^{\ell} t_i= \sum_{i=1}^{\ell} w_i \cdot s_i\in [\fl,\fl].
\end{split}		\]
This contradicts the assumption that the tuple $(s_1, \ldots , s_{\ell} ) \in \ft^{\ell}$ is generic, so we conclude that $\sum_{i=1}^{\ell} x_i \notin [\fl,\fl]$. Therefore, the tuple $(O_1, \ldots , O_{\ell})$ is generic. 	 	\end{proof}

 	  When $G=\GL_n$, note that we can check the genericity by considering subsums of eigenvalues as in \cite{hausel2011arithmetic}.

\section{Absolutely indecomposable parabolic $G$-bundles}\label{s:stabiliser-problem}
Let us start with an absolutely indecomposable element in $(G/B)^\ell$.

\subsection{The forward direction of Theorem \ref{thm:main-abs}}\label{s:frombtoa}

\begin{thm}\label{conj:between2and3}
Let \((g_1B, \ldots, g_{\ell}B) \in (G/B)^{\ell}\) be absolutely indecomposable for given $g_1,\ldots , g_\ell\in G$.  	
	Then there exist \((b_1, \ldots , b_{\ell}) \in B^{\ell}\) such that
		\begin{equation}\label{eq:solution}
		\mathrm{Ad}_{g_1 b_1}(s_1) + \cdots  + \mathrm{Ad}_{g_{\ell} b_{\ell}}(s_{\ell})
		= 0
		\end{equation}
			for some regular elements $s_1,\ldots, s_{\ell} \in \ft$ such that $(\mathrm{Ad}_{G}(s_1), \ldots ,\mathrm{Ad}_{G}(s_{\ell})))$ is generic.
\end{thm}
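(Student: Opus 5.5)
Set $\fb_i:=\Ad_{g_i}(\fb)$ and $\fn_i:=\Ad_{g_i}(\fn)$, where $\fn$ is the nilradical of $\fb$. For regular $s_i\in\ft$ the set $\{\Ad_{g_ib_i}(s_i)\mid b_i\in B\}$ is exactly $\Ad_{g_i}(s_i)+\fn_i$, because $B\cdot s_i=s_i+\fn$ for regular semisimple $s_i$. So \eqref{eq:solution} has a solution if and only if
\[
\sum_{i=1}^{\ell}\Ad_{g_i}(s_i)\in \fn_1+\cdots+\fn_\ell .
\]
This turns the problem into a linear one. The map $\mu\colon\ft^{\ell}\to\fg/(\fn_1+\cdots+\fn_\ell)$, $(s_i)\mapsto\sum_i\Ad_{g_i}(s_i)$, is linear. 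We need $\ker\mu$ to contain a tuple that is regular in each entry and generic in the sense of Definition \ref{defe:genericelement}. Regularity and genericity are both complements of finitely many proper linear subspaces of $\ft^\ell$: root hyperplanes for regularity, and the subspaces $\{\sum_i w_is_i\in\ft\cap[\fl,\fl]\}$ for genericity. Since $q$ is large and $k$ is infinite, it therefore suffices to show that $\ker\mu$ is not contained in any one of these subspaces.

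\textbf{Identifying $\ker\mu$.} Use the Killing form, which is nondegenerate because $p$ is very good (with an invariant form on the centre if needed). Then $(\fn_1+\cdots+\fn_\ell)^{\perp}=\bigcap_i\fn_i^{\perp}=\bigcap_i\fb_i$. Hence $\mu(s)=0$ if and only if $\sum_i\kappa(\Ad_{g_i}s_i,Y)=0$ for all $Y\in\fa:=\bigcap_i\fb_i$. Now $\fa=\Lie(\bigcap_i g_iBg_i^{-1})\supseteq\Lie\mathrm{Stab}_G(p)$, and I expect equality, since $\mathrm{Stab}_G(p)$ is a smooth intersection of Borels in good characteristic. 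Write $\pi_i(Y)\in\ft$ for the image of $\Ad_{g_i}^{-1}Y\in\fb$ in $\fb/\fn\cong\ft$. The condition becomes
\[
\sum_i\kappa(s_i,\pi_i(Y))=0\quad\text{for all }Y\in\fa .
\]

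\textbf{Using absolute indecomposability.} The stabiliser $\mathrm{Stab}_G(p)^\circ/Z_G$ is unipotent, so every semisimple element of $\fa$ is central. Its Jordan decomposition inside the solvable algebra $\fa$ therefore gives $\fa=\fz\oplus\fa_{\mathrm{nil}}$, where $\fz$ is the centre of $\fg$ and $\fa_{\mathrm{nil}}$ consists of nilpotent elements. For $Y$ nilpotent we have $\pi_i(Y)=0$, because the image in $\ft$ of a nilpotent element of a Borel is zero. For central $Y=z$ we have $\pi_i(z)=z$. The constraint thus reduces to $\kappa(\sum_i s_i,\fz)=0$, that is, $\sum_i s_i\in\ft\cap[\fg,\fg]$. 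So $\ker\mu=\{(s_i)\in\ft^\ell:\sum_is_i\in[\fg,\fg]\}$, which does not depend on $p$.

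\textbf{Finding a good point, and the main obstacle.} It remains to find a tuple in this kernel that is regular and generic. That is the statement that generic regular tuples exist at all: the kernel is precisely the space on which the genericity condition is tested, with $\fl=\fg$ now allowed. The paper cites \cite[\S3.1.6]{KNWG} for this when $\ell\ge3$. Concretely, no "bad" subspace $\{\sum_iw_is_i\in[\fl,\fl]\}$ with $\fl$ proper can contain $\ker\mu$. To see this, take $s_1$ arbitrary regular, $s_3=-s_1-s_2$, and vary $s_2$ freely; this handles each $(w_i)$ because $\ft\cap[\fl,\fl]$ is a proper subspace of $\ft\cap[\fg,\fg]$. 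I expect the step needing the most care to be the identification $\fa=\fz\oplus\fa_{\mathrm{nil}}$. It requires knowing that $\bigcap_i\fb_i$ really is the Lie algebra of the stabiliser, so that semisimple elements integrate to tori in $\mathrm{Stab}_G(p)$. I would first try to prove this by showing the scheme-theoretic stabiliser is smooth in very good characteristic. Failing that, I would argue directly: a semisimple element of $\bigcap_i\fb_i$ lies in a Cartan subalgebra of each $\fb_i$ and thus determines a torus centralising it inside each $g_iBg_i^{-1}$.
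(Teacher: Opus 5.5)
Your proposal is the paper's proof written in dual form. The paper shows the projection $\phi\colon X\to\ft^{\ell}$ is surjective by proving $\phi^*$ injective: it represents a functional in $\ker\phi^*$ by some $y\in\bigcap_i\Ad_{g_i}(\fb)$ through an invariant form, then uses absolute indecomposability to force $y$ to be nilpotent. The image of $\phi$ is precisely your $\ker\mu$, and your computation $(\sum_i\fn_i)^{\perp}=\bigcap_i\fb_i$ is the same duality. Keeping the centre, rather than first reducing to semisimple $G$, is a harmless variation. It gives the clean description $\ker\mu=\{(s_i):\sum_is_i\in\ft\cap[\fg,\fg]\}$.

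Three local repairs are needed.

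(i) The step you flag is that semisimple elements of $\mathfrak{a}=\bigcap_i\fb_i$ are central. The paper asserts this without argument, and it needs no smoothness of the scheme-theoretic stabiliser. Take $Y\in\mathfrak{a}$ semisimple and set $L:=C_G(Y)$, a Levi subgroup since $p$ is very good. For each $i$, $Y\in\Lie(T_i)$ for some maximal torus $T_i$ of $g_iBg_i^{-1}$. Then $T_i\subseteq L$, so $Z_L^\circ\subseteq T_i$ for every $i$. Hence $Z_L^\circ$ is a torus in $\mathrm{Stab}_G(p)^\circ$, and absolute indecomposability gives $Z_L^\circ\subseteq Z_G$. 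Therefore $L=C_G(Z_L^\circ)=G$, and $Y$ is central because $\Lie C_G(Y)$ centralises $Y$. Combined with closure of each $\fb_i$ under Jordan decomposition, this yields $\mathfrak{a}=\mathfrak{z}(\fg)\oplus(\mathfrak{a}\cap\fn_1)$.

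(ii) Your concrete recipe for a good point fails. With $s_3=-s_1-s_2$ you get $\sum_is_i=0\in[\fl,\fl]$, so $w=(1,\ldots,1)$ makes every such tuple non-generic. Instead, note that $(\delta,0,\ldots,0)\in\ker\mu$ for all $\delta\in\ft\cap[\fg,\fg]$. For this tuple $\sum_iw_is_i=w_1\delta$, which sweeps out $\ft\cap[\fg,\fg]\supsetneq\ft\cap[\fl,\fl]$. So each bad subspace meets $\ker\mu$ properly, and the root hyperplanes are handled the same way.

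(iii) Very good characteristic does not make the Killing form nondegenerate: it vanishes identically on $\sl_3$ in characteristic $2$. Use instead a nondegenerate $G$-invariant symmetric form, which exists for very good $p$ (cf.\ \cite{letellier2005fourier}). It still satisfies $\fn^{\perp}=\fb$ and $[\fg,\fg]^{\perp}=\mathfrak{z}(\fg)$. The paper makes the same slip.
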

 \begin{proof}
From the definition of generic, we have that $s_1+\cdots +s_{\ell}\in [\fg,\fg]$. In addition, we can assume that
$g_1, \ldots , g_{\ell},b_1, \ldots , b_{\ell}\in [G,G]$ since $G=Z_G[G,G]$ and $\Ad_{z}(X)=X$ for $z\in Z_G$ and $X\in \fg$. Therefore, let us assume that $G$ is semisimple.

It is sufficient to prove the following. Let 
$$X:=\{(X_1, \ldots , X_{\ell})\in \Ad_{g_1}(\mathfrak{b})\times \ldots \times \Ad_{g_{\ell}}(\mathfrak{b}) \,|\, X_1+\cdots +X_{\ell}=0\}.$$ Then the following map $\phi$ is surjective:
\[
\phi\,:\, X\rightarrow \ft^\ell\quad \text{given by }\phi((X_1, \ldots , X_{\ell}))=(\Ad_{g_1^{-1}}(\tilde{t}_1), \ldots , \Ad_{g_{\ell}^{-1}}(\tilde{t}_{\ell})) ,
\]
where $X_i:=\tilde{t}_i+n_i\in \Ad_{g_i}(\ft+\fn)$.

If this map is surjective, then using the fact that the tuple of regular generic elements in $\ft^n$ is dense and open, we can find  a generic regular tuple $(s_1, \ldots , s_\ell)\in \ft^\ell$ and $(b_1, \ldots , b_\ell)\in B^\ell$ such that $\Ad_{g_1b_1}(s_1)+\cdots +\Ad_{g_1b_\ell}(s_\ell)=0$ by taking $b_i\in B$ such that $\Ad_{b_i}(s_i)=s_i+n_i$ for $(s_1+n_1, \ldots , s_\ell+n_\ell)\in \phi^{-1}((s_1, \ldots , s_\ell)).$
 (Recall that $t+n$ is regular semisimple element if $t\in \ft$ is regular  and $n\in \fn$.) The claim that $\phi$ is surjective will be proved the following Lemma \ref{lem:dominant}, and so we are done.
 \end{proof}
    
 Note that our proof implies that there exists $b_1,\ldots , b_\ell\in B$ for each regular semisimple tuple $(s_1, \ldots , s_\ell)$ satisfying Equation \eqref{eq:solution}. However, the generic condition is important to the converse direction,   so we focus on the generic condition in Theorem \ref{conj:between2and3}.
    
    \begin{lem}\label{lem:dominant}
    Let $G$ be a connected semisimple simply connected group, its Borel subgroup $B$, and their Lie algebras $\fg$ and $\fb=\ft\oplus \fn$. Let us assume that every semisimple element $t\in \mathrm{Stab}_G(g_1B,g_2B,\ldots , g_{\ell}B)^{ss}$ is isolated. Then the   map $  \phi\,:\, X\rightarrow \ft^{\ell} $  is surjective
    
    \end{lem}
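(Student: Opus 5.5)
The plan is to reduce surjectivity of $\phi$ to a linear-algebra statement and then to use duality. The set $X$ is a linear subspace of $\bigoplus_i \fb_i$, where $\fb_i:=\Ad_{g_i}(\fb)$ and $X=\ker\bigl(\Sigma\colon \bigoplus_i \fb_i\to \fg\bigr)$ for the sum map $\Sigma$. The map $\phi$ is the restriction to $X$ of the linear map $(X_i)\mapsto (\pi(\Ad_{g_i^{-1}}X_i))_i$, where $\pi\colon \fb=\ft\oplus\fn\to\ft$ is the projection. So $\phi$ is linear, and it suffices to show that every $(h_1,\ldots,h_\ell)\in\ft^\ell$ orthogonal to $\phi(X)$ is zero. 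For this we fix a nondegenerate $G$-invariant symmetric bilinear form $\kappa$ on $\fg$; in very good characteristic one can take a suitable trace form. Under $\kappa$ we have $\ft\perp\fn$, $\fn\perp\fn$ and $\fb^\perp=\fn$.

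\textbf{Step 1 (translating orthogonality).} For $h\in\ft$ and $x\in\fb$ we have $\kappa(h,x)=\kappa(h,\pi(x))$. Hence $(h_i)$ is orthogonal to $\phi(X)$ if and only if the functional
\[
(X_i)_i\longmapsto \sum_i \kappa\bigl(\Ad_{g_i}h_i,X_i\bigr)
\]
on $\bigoplus_i\fb_i$ vanishes on $\ker\Sigma$. A functional vanishing on $\ker\Sigma$ factors through $\Sigma$, so it has the form $(X_i)\mapsto\kappa(Z,\sum_i X_i)$ for some $Z\in\fg$, using nondegeneracy of $\kappa$ on $\fg$. This yields $\Ad_{g_i}h_i-Z\in\fb_i^\perp=\Ad_{g_i}(\fn)$ for every $i$, that is,
\[
Z\in \Ad_{g_i}(h_i+\fn)\subset \fb_i\quad\text{for all } i .
\]
Consequently $Z\in\bigcap_i\fb_i$, and the semisimple part of $Z$ is $G$-conjugate to each $h_i$. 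The last point holds because for $t\in\ft$ and $n\in\fn$, the semisimple part of $t+n$ is $B$-conjugate to $t$.

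\textbf{Step 2 (using the isolatedness hypothesis).} Write $Z=Z_s+Z_n$ for the Jordan decomposition. Each $\fb_i$ is the Lie algebra of the algebraic group $B_i:=g_iBg_i^{-1}$, so $Z_s\in\fb_i$ for every $i$. Choose a maximal torus $T_i\subset B_i$ with $Z_s\in\Lie(T_i)$. Then $T_i\subset C_G(Z_s)$, which is a Levi subgroup $L$ by \S\ref{sss:levisubgrous}. The connected centre $S:=Z(L)^\circ$ lies in every maximal torus of $L$, so $S\subset\bigcap_i T_i\subset\bigcap_i B_i=\mathrm{Stab}_G(g_1B,\ldots,g_\ell B)$. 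Suppose $Z_s\neq 0$. Since $G$ is semisimple with $p$ very good, $\fg$ has trivial centre, so $L$ is a proper Levi subgroup and $S$ is a nontrivial torus. A generic element $t\in S$ satisfies $C_G(t)=C_G(S)=L$, and $L$ lies in a proper parabolic subgroup. Thus $t$ is a semisimple element of the stabiliser whose centraliser is not isolated, which contradicts the hypothesis. Therefore $Z_s=0$ and $Z$ is nilpotent.

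\textbf{Step 3 (conclusion).} Since $Z$ is nilpotent, each $\Ad_{g_i^{-1}}Z=h_i+n$ is nilpotent. Its semisimple part is conjugate to $h_i$, so $h_i=0$ for every $i$. Hence the annihilator of $\phi(X)$ is zero, and $\phi$ is surjective.

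The main obstacle is expected to be Step 2. The delicate points are the following:
\begin{enumerate}
\item justifying that the semisimple part $Z_s$ stays in each $\fb_i$ and lies in the Lie algebra of a maximal torus of $B_i$;
\item ensuring that $C_G(Z_s)$ is a proper Levi subgroup, which uses that $p$ is very good;
\item checking that a generic element of the nontrivial torus $S$ has centraliser exactly $L$.
\end{enumerate}
Nondegeneracy of $\kappa$ in Step 1 also relies on the very good characteristic assumption.
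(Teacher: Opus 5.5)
Your proof is correct and follows essentially the same route as the paper. You dualise, represent the functional that vanishes on the kernel of the sum map by an element ($Z$, the paper's $y$) via a nondegenerate invariant form, show it lies in $\bigcap_i \Ad_{g_i}(\fb)$, use isolatedness to make it nilpotent, and conclude that the $\ft$-components vanish; your Step 2, which produces the nontrivial torus $Z(C_G(Z_s))^\circ$ inside $\mathrm{Stab}_G(g_1B,\ldots,g_\ell B)$, supplies the justification for the paper's one-line claim that the semisimple part of $y$ is central, and your choice of a suitable trace form rather than the Killing form avoids the Killing form's degeneracy in some very good characteristics.
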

    \begin{proof}
To show the surjectivity, let us show that the dual map $\phi^*:(\ft^*)^{\ell}\rightarrow X^*$ is injective. 
Let us take $(\lambda_1, \lambda_2,\ldots , \lambda_{\ell})=:\lambda \in \mathrm{ker}(\phi^*)$, i.e., $\lambda((t_1,t_2,\ldots , t_{\ell}))=\lambda_1(t_1)+\lambda_2(t_2)+\cdots +\lambda_{\ell}(t_{\ell}).$

 Let $\tilde{\phi}=(\tilde{\phi}_1,\tilde{\phi}_2,\ldots , \tilde{\phi}_{\ell})$ be a natural extension of $\phi$ (of domain)  from $\Ad_{g_1}(\fb)\times \Ad_{g_2}(\fb)\times\ldots \times  \Ad_{g_{\ell}}(\fb) $ to $\ft^{\ell}$, and  consider the following diagram \[
\begin{tikzcd}
\Ad_{g_1}(\fb) \times \ldots \times \Ad_{g_{\ell}}(\fb) \arrow[rr, "g"] \arrow[dr, "\lambda\circ \tilde{\phi}"] & & \fg \arrow[dl, "h"] \\
& k &
\end{tikzcd}
\]
(recall that $k$ is a field). Furthermore, let us take the map $g$ by the summation map, i.e., $$g((x_1,x_2,\ldots , x_{\ell}))=x_1+x_2+\cdots +x_{\ell}.$$ Then to make a commuting diagram, let us construct $h$ as follows: $$h(g((x_1,x_2,\ldots ,x_{\ell})))=\lambda\circ \tilde{\phi}((x_1, x_2,\ldots , x_{\ell})).$$ Note that this is well-defined since $\mathrm{ker}(g)\subset \mathrm{ker}(\lambda \circ \tilde{\phi})$. (This inclusion comes from the construction of $\lambda$, and the well-definedness of $h$ holds since every map is linear.)
Note that we can check that 
\[
\lambda\circ\tilde{\phi}|_{\Ad_{g_i}(\fb)}=\lambda_i\circ\tilde{\phi}_i.
\]

Now, let us make $h\in \fg^*$ by extending $h\,:\, \mathrm{Im}(g)\rightarrow k$. Then using the isomorphism $\fg$ and $\fg^*$ via a non-degenerate Killing form $\kappa$, we have
\[
h(x)=\kappa(x,y)\quad \text{for some }y \in \fg.
\]
Our focus is to figure out what is $y$. To do this, let us consider the following   
\begin{equation}\label{eq:lambdaandkappa}
\begin{split}
\lambda\circ\tilde{\phi}((\Ad_{g_1}(n_1),0,\ldots,0))&=h(g((\Ad_{g_1}(n_1),0,\ldots , 0)))\\
&=h(\Ad_{g_1}(n_1))\\
&=\kappa(\Ad_{g_1}(n_1),y)
\end{split}
\end{equation} for any nilpotent element $n_1\in \fn$.
However, we have $$\lambda\circ\tilde{\phi}((\Ad_{g_1}(n_1),0,\ldots, 0))=\lambda((0,0,\ldots, 0))=0$$ since $\tilde{\phi}_1(\Ad_{g_1}(n_1))=0,$ and so this gives that
\[
\kappa(\Ad_{g_1}(\fn),y)=0.
\]
This implies that $y\in \Ad_{g_1}(\fb)$ since the orthogonal complement of a nilpotent radical of $\fb$ is $\fb$. With the same steps from $g_2$ to $g_{\ell}$, we can check that
\[
y\in \Ad_{g_1}(\fb)\cap \Ad_{g_2}(\fb)\cap\ldots \cap \Ad_{g_{\ell}}(\fb).
\]
Now, use the condition   that every semisimple element $t$ in $\mathrm{Stab}_G(g_1B,g_2B,\ldots , g_{\ell}B)$ is isolated. This means that when $y\in \Ad_{g_1}(\fb)\cap \Ad_{g_2}(\fb)\cap\ldots \cap \Ad_{g_{\ell}}(\fb)$, its semisimple part is in the centre of $\fg$. Then since $G$ is semisimple, its semisimple part is $0$, and so $y$ is a nilpotent element in the intersection. 

Let us finish the proof by showing that each $\lambda_i$ is zero function. 
For any $t\in \Ad_{g_i}(\ft),$ we have
\[
\lambda_i\circ \tilde{\phi}_i(t)=\lambda_i(\Ad_{g_i^{-1}}(t))=\kappa(t,y)=0
\]
since
\begin{enumerate}
\item the first equality is the definition,
\item  the second equality is the same logic with Equation \eqref{eq:lambdaandkappa}, 
\item the last equality comes from $t\in \Ad_{g_i}(\ft)\subset \Ad_{g_i}(\fb)$ and $y\in \Ad_{g_i}(\fn)$.
\end{enumerate}
 This gives that $\lambda_i=0$ since $\tilde{\phi}_i$ is surjective on $\ft$ for $i=1,2,\ldots, {\ell}$. Therefore, we have checked that $\mathrm{ker}(\phi^*)=\{0\}$, and so the map $\phi$ is surjective.    \end{proof}

\subsection{The converse direction of Theorem \ref{thm:main-abs}}\label{ss:converse-thm-1}
We prove the following sentence in this subsection. \begin{thm}\label{thm:between2and3}
	Let us consider regular elements $s_1, \ldots,s_{\ell} \in \ft $ such that the tuple  $(\mathrm{Ad}_G(s_1), \ldots,\mathrm{Ad}_G(s_{\ell}))$ is \emph{generic}.
For  \((g_1B, \ldots, g_{\ell}B) \in (G/B)^{\ell}\) with a given $g_1,\ldots , g_\ell\in G$, let us assume that there exists \((b_1,\ldots, b_{\ell}) \in B^{\ell}\) such that
		\begin{equation}\label{eq:equation-ad}
		\mathrm{Ad}_{g_1 b_1}(s_1) + \cdots + \mathrm{Ad}_{g_{\ell} b_{\ell}}(s_{\ell})
		= 0.
		\end{equation}
	Then $(g_1B,\ldots , g_\ell B)$ is absolutely indecomposable.
\end{thm}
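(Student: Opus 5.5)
We argue by contradiction. Suppose $(g_1B,\ldots,g_\ell B)$ is not absolutely indecomposable, so $S:=\mathrm{Stab}_G(g_1B,\ldots,g_\ell B)=\bigcap_i g_iBg_i^{-1}$ has $S^\circ/Z_G$ not unipotent. Since $S^\circ$ is a connected solvable group, it contains a torus; the assumption means we can choose a torus $T'\subset S^\circ$ not contained in $Z_G$. Equivalently, we take a semisimple $t\in S$ whose centraliser $C_G(t)$ is not isolated. We will produce a proper Levi subgroup $L=C_G(T')$ and contradict genericity of $(\mathrm{Ad}_G(s_1),\ldots,\mathrm{Ad}_G(s_\ell))$ via Definition \ref{defe:genericoriginal}, or via its Levi form in \S\ref{sss:algebra-parabolic-generic}.

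The key steps are as follows. First, since $T'\subset g_iBg_i^{-1}$ for every $i$, conjugating within $g_iBg_i^{-1}$ lets us choose a maximal torus of $g_iBg_i^{-1}$ containing $T'$. Replacing $g_i$ by $g_iu_i$ for suitable $u_i\in B$, which changes neither the coset $g_iB$ nor the solvability of \eqref{eq:equation-ad} (we absorb $u_i^{-1}$ into $b_i$), we may assume $T'\subset g_iTg_i^{-1}$ for all $i$. Second, we take a cocharacter $\mu$ of $T'$ in general position, that is, with $C_G(\mu)=L:=C_G(T')$. Since $T'\not\subset Z_G$, the group $L$ is a proper Levi subgroup, and $P_\mu$ is a proper parabolic subgroup with Levi $L$. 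Third, we set $x_i:=\mathrm{Ad}_{g_ib_i}(s_i)\in \mathrm{Ad}_{g_i}(\fb)$. Each $\mathrm{Ad}_{g_i}(\fb)$ is the Lie algebra of a Borel containing the torus $T'$ and hence the image of $\mu$. We then degenerate: the limit $\lim_{z\to 0}\mathrm{Ad}_{\mu(z)}(x_i)$, where it exists, projects $x_i$ onto its $\mathrm{Lie}(L)$-component. For the limit to exist we need $x_i\in\fp_\mu$, and that fails in general. So we instead exploit that $\mathrm{Ad}_{g_i}(\fb)\cap\fl$ is a Borel subalgebra of $\fl$, and take $\pi_i$ to be the projection of $x_i$ onto the $\mathrm{Ad}_{g_i}(\ft)$-component. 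This $\pi_i$ is $\mathrm{Ad}_{g_i}(s_i)$, which lies in $\mathrm{Lie}(C_G(T'))=\fl$ because $g_iTg_i^{-1}$ centralises $T'$.

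Fourth, we pair with the centre. Take any $z\in \mathrm{Lie}(Z_L)=\mathrm{Lie}(T')+\mathfrak z(\fg)$ and use the Killing form, working with $G$ semisimple as in Theorem \ref{conj:between2and3}. We have $\kappa(x_i,z)=\kappa(\mathrm{Ad}_{g_i}(s_i),z)$, since the nilpotent part $x_i-\mathrm{Ad}_{g_i}(s_i)\in\mathrm{Ad}_{g_i}(\fn)$ is orthogonal to $\mathrm{Ad}_{g_i}(\ft)\ni z$. Summing over $i$ and using \eqref{eq:equation-ad} gives $\kappa\bigl(\sum_i \mathrm{Ad}_{g_i}(s_i),z\bigr)=0$ for all $z\in\mathfrak z(\fl)$. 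Since $\fl=\mathfrak z(\fl)\oplus[\fl,\fl]$ orthogonally, this shows $\sum_i\mathrm{Ad}_{g_i}(s_i)\in[\fl,\fl]$, where $\mathrm{Ad}_{g_i}(s_i)\in\fl\cap \mathrm{Ad}_G(s_i)$. This contradicts genericity in the form of \S\ref{sss:algebra-parabolic-generic}, after conjugating $\fl$ to a standard Levi subalgebra. Hence $S^\circ/Z_G$ is unipotent.

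The main obstacle is the first step: ensuring simultaneously for all $i$ that the chosen torus sits inside $g_iTg_i^{-1}$ after modifying $g_i$ by elements of $B$, with the modification compatible with \eqref{eq:equation-ad}. The absorption $b_i\mapsto u_i^{-1}b_i$ handles this cleanly. A secondary subtlety is passing from "not unipotent mod centre" to a non-central torus; here we use that a connected solvable group whose maximal torus is central mod $Z_G$ is unipotent modulo $Z_G$. We also need the very good characteristic assumption to make the Killing form (or a suitable invariant form) nondegenerate with $\fl=\mathfrak z(\fl)\oplus[\fl,\fl]$.
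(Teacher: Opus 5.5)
Your argument is correct, and it takes a genuinely different route from the paper's.

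\textbf{How the two proofs differ.} The paper works in coordinates. It Bruhat-decomposes $g_1^{-1}g_i=u_iw_ib_i'$ and conjugates so that $L=C_G(t)$ is a standard Levi containing $T$. It rewrites $u_iw_iv_i=l_ix_i$ with $l_i\in L$, $x_i\in W$, and projects the equation onto $\ft$ along root spaces. It then argues that the $\ft$-components of all nilpotent terms lie in $\ft\cap[\fl,\fl]=\bigoplus_{\alpha\in\Phi_L}\check{\alpha}(k)$. This gives $s_1+\sum_i (y_ix_i)\cdot s_i\in\ft\cap[\fl,\fl]$, contradicting the Weyl-group form of genericity (Definition~\ref{defe:genericelement}).

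You stay coordinate-free. You move the non-central torus into every $g_iTg_i^{-1}$ by changing $g_i$ within its coset $g_iB$, absorbing the change into $b_i$. You then pair the equation with $\mathfrak{z}(\fl)$ through an invariant form, which kills the nilpotent parts. This yields $\sum_i\Ad_{g_i}(s_i)\in[\fl,\fl]$ with each $\Ad_{g_i}(s_i)\in\fl\cap O_i$, contradicting the Levi (hence parabolic) form of genericity directly.

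\textbf{What each route buys.} Yours is the same duality trick as Lemma~\ref{lem:dominant}, so the two directions of Theorem~\ref{thm:main-abs} become symmetric. It also avoids the Bruhat bookkeeping and the rather informal ``only coroot spaces from $L$ appear'' step. The price is that you need a nondegenerate $G$-invariant form with $\fl=\mathfrak{z}(\fl)\oplus[\fl,\fl]$ orthogonal. In very good characteristic this should be the form $\mu$ of \S\ref{ss:generic-character}, not the Killing form, which can vanish (e.g.\ for $\Sp_4$ in characteristic $3$). Alternatively, you can drop the form entirely: project onto $\mathfrak{z}(\fl)$ using the $Z_L^\circ$-weight decomposition together with $\fl=\mathfrak{z}(\fl)\oplus[\fl,\fl]$. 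The paper's root-space projection needs no form at all.

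\textbf{Two small corrections.} First, the identity $\mathrm{Lie}(Z_L)=\mathrm{Lie}(T')+\mathfrak{z}(\fg)$ is false in general, since $Z_L^\circ$ can be larger than $T'Z_G^\circ$. For example, a generic one-dimensional torus in $\SL_3$ has $L=T$. This does not affect the proof: all you need is $\mathfrak{z}(\fl)\subset\Ad_{g_i}(\ft)$ for every $i$. That holds in very good characteristic because each $g_iTg_i^{-1}$ centralises $T'$ and is therefore a maximal torus of $L$. Second, the cocharacter and degeneration step is never used and can be deleted.
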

 
  \subsubsection{Setting} Before starting the proof, we find an equivalent condition to prove Theorem \ref{thm:between2and3}.
  
\bigskip 	With the Bruhat decomposition of $G$, i.e., $G=UWB$, we have that \begin{equation}\label{eq:minor-change-eq}
  \begin{split}
  	&	\mathrm{Ad}_{g_1 b_1}(s_1) + \cdots+ \mathrm{Ad}_{g_{\ell} b_{\ell}}(s_{\ell})
  	= 0\\
  	\iff &	\mathrm{Ad}_{ b_1}(s_1) + \cdots + \mathrm{Ad}_{g_1^{-1}g_{\ell} b_{\ell}}(s_{\ell})=\mathrm{Ad}_{ b_1}(s_1) + \cdots+ \mathrm{Ad}_{u_{\ell}w_{\ell} b_{\ell}'}(s_{\ell})
  	= 0,
  \end{split}
  \end{equation}
  where $b_i,b_i'\in B, w_i \in W$ and $u_i \in U$ for each $i$.
  Now, let us decompose this as follows:
  \begin{equation}\label{eq:901}
  \begin{split}
  	& \mathrm{Ad}_{ b_1}(s_1) + \cdots+ \mathrm{Ad}_{u_{\ell} w_{\ell} b_{\ell}'}(s_{\ell})=0
  	\\ \iff& s_1 +n_1 + \cdots + \mathrm{Ad}_{  w_{\ell} }(s_{\ell})+\mathrm{Ad}_{ u_{\ell} w_{\ell} }(n_{\ell})+\tilde{n_3}=0
  	\\ \iff& s_1  +\cdots + \mathrm{Ad}_{  w_{\ell} }(s_{\ell})=-(n_1+\cdots+\mathrm{Ad}_{u_{\ell} w_{\ell} }(n_{\ell})+\tilde{n })\in \mathfrak{n}+\cdots +\mathrm{Ad}_{u_{\ell}w_{\ell}}(\mathfrak{n})
  \end{split}
  \end{equation}
  for some $n_1,\ldots,n_\ell, \tilde{n}\in \mathfrak{n}$. 
  Here, we use the following computation: For $s\in \ft$, $u\in U$, $w\in W$ and $b\in B$, we have
  \begin{equation}\label{eq:computationexample}
  \Ad_{uwb}(s)=\Ad_{uw}(s+n)=\Ad_{uw}(s)+\Ad_{uw}(n)=\Ad_{w}(s)+\Ad_{uw}(n)+\tilde{n},
  \end{equation}
  where $n \in \mathfrak{n}$ and $\tilde{n}=\Ad_{uw}(s)-\Ad_{w}(s)\in \mathfrak{n}$.
  Note that $\tilde{n}=\sum_{i=1}^{\ell} \left(\Ad_{u_iw_i}(s_i)-s_i\right)$. 
  
\bigskip
  From this decomposition and the definition of the generic condition, we can check the existence of $(b_1, \ldots , b_\ell)\in B^\ell$ and $(s_1, \ldots , s_\ell)\in \ft^\ell$ satisfying Equation \eqref{eq:equation-ad} is equivalent to show that
  \[
  \underset{\alpha\in \Psi}{\oplus} \check{\alpha}(k)\subset \mathfrak{n}+\cdots +\mathrm{Ad}_{u_{\ell}w_{\ell}}(\mathfrak{n}),
  \]
  where $\Psi$ is an isolated closed subsystem of $\Phi_G$.
 Here, $\check{\alpha}(k)$ is an one-parameter subalgebra $
  \check{\alpha}(k)
  =
  \bigl\{\,
  \check{\alpha}(s)
  \bigm|
  s \in k
  \bigr\}\subset  \ft $ for a coroot $\check{\alpha} $. 
  
 \bigskip

\subsubsection{Proof of Theorem \ref{thm:between2and3}} \label{s:fromatob}  
Now, let us prove Theorem \ref{thm:between2and3}. Let us assume that there is $t\in \mathrm{Stab}_G(B,u_{2}w_{2}B,\ldots ,u_{\ell}w_{\ell}B)^{ss}$ such that $C_G(t)$ is a proper Levi subgroup $L$. Without loss of the generality, we can assume that $C_G(t)$ is standard, i.e., $T\subset C_G(t)$. 
This is because as follows. There exist $v_1, \ldots, v_\ell\in B$ such that 
\[
\begin{split}
&t\in v_1Tv_1^{-1}\cap u_2w_2v_2Tv_2^{-1}w_2^{-1}u_2^{-1}\cap\ldots \cap u_{\ell}w_{\ell}v_{\ell}Tv_{\ell}^{-1}w_{\ell}^{-1}u_{\ell}^{-1}\\
\iff &  v_1Tv_1^{-1},\ u_2w_2v_2Tv_2^{-1}w_2^{-1}u_2^{-1},\ldots ,\ u_{\ell}w_{\ell}v_{\ell}Tv_{\ell}^{-1}w_{\ell}^{-1}u_{\ell}^{-1}\subset C_G(t) \\
\iff &T,\ v_1^{-1}u_2w_2v_2Tv_2^{-1}w_2^{-1}u_2^{-1}v_1,\ \ldots ,\ v_1^{-1}u_{\ell}w_{\ell}v_{\ell}Tv_{\ell}^{-1}w_{\ell}^{-1}u_{\ell}^{-1}v_1\subset C_G(v_1^{-1}tv_1) 
\end{split}
\]
from \cite[Proposition 3.5.2]{carter1985finite}. Then we can consider $\mathrm{Stab}_G(B,v_1^{-1}u_{2}w_{2}B,\ldots ,v_1^{-1}u_{\ell}w_{\ell}B)^{ss}$ instead of $\mathrm{Stab}_G(B,u_{2}w_{2}B,\ldots ,u_{\ell}w_{\ell}B)^{ss}$. Then by showing that $C_G(v_1^{-1}tv_1)$ is isolated, we can also check that $C_G(t)$ is also isolated.

\bigskip
So for convenience, let us assume that $v_1=1$.
Then we have that 
\[
 u_iw_iv_i=l_ix_i  \iff u_iw_i=  l_ix_i v_i^{-1}
\]
for some $l_i \in L$ and $x_i\in W$ for each $i$.
Let us apply this to Equation \eqref{eq:minor-change-eq}, then we have
\[
\begin{split}
&\mathrm{Ad}_{ b_1}(s_1) + \cdots+ \mathrm{Ad}_{u_{\ell}w_{\ell} b_{\ell}'}(s_{\ell}) 	
=  \mathrm{Ad}_{ b_1 }(s_1) + \mathrm{Ad}_{l_2x_2v_2^{-1} b_2'}(s_2)+\cdots + \mathrm{Ad}_{l_{\ell}x_{\ell}v_{\ell}^{-1} b_{\ell}'}(s_{\ell})=0.
\end{split}
\]
With the same logic in Equation \eqref{eq:901}, we can get that 
\begin{equation}\label{eq:911}
\begin{split}
	&s_1+m_1 + \mathrm{Ad}_{ l_2x_2 }(s_2)+\mathrm{Ad}_{ l_2x_2 }(m_2) + \cdots + \mathrm{Ad}_{ l_{\ell}x_{\ell} }(s_{\ell})+\mathrm{Ad}_{ l_{\ell}x_{\ell} }(m_{\ell})=0\\
	\Rightarrow &
	s_1  + \mathrm{Ad}_{ l_2x_2 }(s_2)+ \cdots + \mathrm{Ad}_{ l_{\ell}x_{\ell} }(s_{\ell})=
	-(m_1+\mathrm{Ad}_{ l_2x_2 }(m_2) +\cdots +\mathrm{Ad}_{ l_{\ell}x_{\ell} }(m_{\ell}) )
\end{split}
\end{equation}
where $\mathfrak{l}$ is the Lie algebra of $L$ and for some $m_1,\ldots,m_{\ell}\in\mathfrak{n}$.
Then since $
	m_1+\mathrm{Ad}_{ l_2x_2 }(m_2) +\cdots +\mathrm{Ad}_{ l_{\ell}x_{\ell} }(m_{\ell}) \in  \mathfrak{n}+\mathrm{Ad}_{ l_2x_2 }(\mathfrak{n}) +\cdots+\mathrm{Ad}_{ l_{\ell}x_{\ell} }(\mathfrak{n})  $, we have that 
	\[
		s_1  + \mathrm{Ad}_{ l_2x_2 }(s_2)+ \cdots + \mathrm{Ad}_{ l_{\ell}x_{\ell} }(s_{\ell})\in   \mathfrak{n}+\mathrm{Ad}_{ l_2x_2 }(\mathfrak{n}) +\cdots+\mathrm{Ad}_{ l_{\ell}x_{\ell} }(\mathfrak{n}).
	\]

 Now, let us apply the Bruhat decomposition to $L$ again. Since $L$ is standard, we have the following decomposition $l_i=h_i y_i h_i'$  for some $h_i,\in U\cap L$, $h_i'\in B\cap L$ and $y_i\in W_L$ for each $i$.
 
 Then repeating Equation \eqref{eq:computationexample}, we have the following:
 \begin{equation}\label{eq:sumofnilpotentsub}
 s_1+\Ad_{y_2x_2}(s_2)+\cdots +\Ad_{y_{\ell}x_{\ell}}(s_{\ell})\in \mathfrak{n} 
 +\sum_{i=2}^{\ell} \mathrm{Ad}_{ l_ix_i }(\mathfrak{n})  
 +\sum_{i=2}^{\ell} \mathrm{Ad}_{h_iy_i}(\mathfrak{n}_L),
 \end{equation}
 where $\mathfrak{n}_L=Lie(U\cap L)$.

\bigskip 
Our main focus is to consider the coroot space, i.e., $\check{\alpha}(k)$ in $  \mathfrak{n} 
+\sum_{i=2}^{\ell} \mathrm{Ad}_{ l_ix_i }(\mathfrak{n}) +\sum_{i=2}^{\ell} \mathrm{Ad}_{h_iy_i}(\mathfrak{n}_L)
$.
Since $l_i \in L$, $x_i,y_i \in W$ and $h_i \in U\cap L$ for each $i$,
 the space
$ \mathfrak{n} 
+\sum_{i=2}^{\ell} \mathrm{Ad}_{ l_ix_i }(\mathfrak{n}) +\sum_{i=2}^{\ell} \mathrm{Ad}_{h_iy_i}(\mathfrak{n}_L)
$  contains (at most) coroot spaces coming from $L$. In other words,  we can get that
\[
\mathfrak{t}\cap \left(\mathfrak{n} 
+\sum_{i=2}^{\ell} \mathrm{Ad}_{ l_ix_i }(\mathfrak{n})  
 +\sum_{i=2}^{\ell} \mathrm{Ad}_{h_iy_i}(\mathfrak{n}_L)\right)
\subset \underset{\alpha\in \Phi_L}{\oplus}\check{\alpha}(k)
\]
since $\Phi_L$ is a closed subsystem of $\Phi$.
This implies that we can check that
\[
s_1  + \mathrm{Ad}_{ y_2x_2 }(s_2)+ \cdots  \mathrm{Ad}_{ y_{\ell}x_{\ell} }(s_{\ell}) \in  \mathfrak{t} \cap\left(\mathfrak{n} 
+\sum_{i=2}^{\ell} \mathrm{Ad}_{ l_ix_i }(\mathfrak{n}) +\sum_{i=2}^{\ell} \mathrm{Ad}_{h_iy_i}(\mathfrak{n}_L)\right)
\subset \mathfrak{t}\cap [\mathfrak{l},\mathfrak{l}]
\]
with  Equation \eqref{eq:sumofnilpotentsub}.
However, the generic condition gives that
\[
s_1  + \mathrm{Ad}_{ y_2x_2 }(s_2)+ \cdots + \mathrm{Ad}_{ y_{\ell}x_{\ell} }(s_{\ell}) \notin    \mathfrak{t}\cap [\mathfrak{l},\mathfrak{l}],
\]
for a proper Levi subgroup $L$, and so this is a contradiction. Thus every $t\in \mathrm{Stab}_G\bigl(g_1 B, \ldots, g_{\ell} B\bigr)^{ss}$ satisfies that $C_G(t)$ is isolated, and so $(g_1B, \ldots , g_\ell B)$ is absolutely indecomposable. \qed

    \subsection{Generalisation}\label{ss:generisation} 
    Let $\mathcal{P}:=(P_1, \ldots , P_\ell)$ be a tuple of parabolic subgroups of $G$. We say that $\mathcal{P}$ is \emph{generic} when there exists a generic tuple $(s_1, \ldots , s_\ell)\in \ft^\ell$ such that $C_G(s_i)=L_i$, where $P_i=L_i\rtimes U_i$. 
From this definition, we call a tuple $(s_1, \ldots , s_\ell)\in \ft^\ell$ is generic $\mathcal{P}$-type if the tuple is generic and $C_G(s_i)=L_i$ for all $i$.   When $G=\GL_n$, a tuple $\mathcal{P}$ is generic if and only if the corresponding partitions are indivisible, cf. \cite[\S1.3]{hausel2011arithmetic}. In this paper, we only consider generic $\mathcal{P}$. Then we have the following extension:

   \begin{thm}\label{thm:extension}
   An element $(g_1P_1, \ldots , g_\ell P_\ell)\in G/P_1\times \ldots \times G/P_\ell$ is absolutely indecomposable for a given $g_1, \ldots , g_\ell\in G$ if and only if there exists $(p_1, \ldots , p_\ell) \in \mathcal{P}$ such that 
   \[
   \Ad_{g_1p_1}(s_1)+\cdots+   \Ad_{g_\ell p_\ell}(s_\ell)=0
   \]
   for a tuple of generic semisimple $\mathcal{P}$-type adjoint orbits $(\Ad_G(s_1),\ldots , \Ad_G(s_\ell))$.
   \end{thm}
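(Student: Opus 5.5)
We will prove both directions by running the arguments of Theorem \ref{conj:between2and3} (through Lemma \ref{lem:dominant}) and Theorem \ref{thm:between2and3} with $\fb$, $\fn$, $\ft$, $B$ replaced by $\fp_i=\Lie(P_i)$, $\fn_{P_i}=\Lie(U_i)$, $\frakz(\fl_i)=\Lie(Z_{L_i})$ and $P_i$. As in the Borel case, we first reduce to $G$ semisimple via $G=Z_G[G,G]$. The key observation is that $C_G(s_i)=L_i$ forces $s_i\in\frakz(\fl_i)$. Moreover, every element of $\Ad_{P_i}(s_i)$ has the form $s_i+n$ with $n\in\fn_{P_i}$, and conversely every $s+n$ with $s\in\frakz(\fl_i)$ satisfying $C_G(s)=L_i$ and $n\in\fn_{P_i}$ lies in $\Ad_{U_i}(s)$. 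The latter is the parabolic analogue of ``$t+n$ is conjugate to $t$ under $U$''.

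\textbf{Forward direction.} Set
\[
X_{\mathcal P}:=\{(X_1,\ldots,X_\ell)\in \textstyle\prod_i\Ad_{g_i}(\frakz(\fl_i)+\fn_{P_i}) \mid \sum_i X_i=0\},
\]
and let $\phi:X_{\mathcal P}\to\prod_i\frakz(\fl_i)$ be projection to the $\frakz(\fl_i)$-components. We will show that $\phi$ is surjective by proving that $\phi^*$ is injective, exactly as in Lemma \ref{lem:dominant}.

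Take $\lambda\in\ker\phi^*$. Factoring $\lambda\circ\tilde\phi$ through the summation map gives some $y\in\fg$ with $\kappa(\Ad_{g_i}(\fn_{P_i}),y)=0$ for all $i$. Since $\fn_{P_i}^{\perp}=\fp_i$, this yields $y\in\bigcap_i\Ad_{g_i}(\fp_i)$.

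The step we expect to be the main obstacle is the following: absolute indecomposability of $(g_iP_i)$ should force the semisimple part $y_s$ to be zero. The plan is as follows.
\begin{itemize}
\item $y_s$ lies in the Lie algebra of $\mathrm{Stab}_G(g_1P_1,\ldots,g_\ell P_\ell)=\bigcap_i g_iP_ig_i^{-1}$.
\item The torus generated by $y_s$ (in very good characteristic, through a torus $S\subset \mathrm{Stab}^\circ$ whose Lie algebra contains $y_s$) gives semisimple elements $t\in\mathrm{Stab}^{ss}$ with $C_G(t)\subseteq C_G(S)$.
\item If $y_s\neq0$, then $C_G(S)$ is a proper Levi subgroup, so some $t$ has non-isolated centraliser. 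This contradicts absolute indecomposability.
\end{itemize}
Hence $y$ is nilpotent in $\bigcap_i\Ad_{g_i}(\fp_i)$.

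We then need $\kappa(\Ad_{g_i}\frakz(\fl_i),y)=0$. We will get this by conjugating $y$ into $\Ad_{g_i}(\fp_i)$ and noting that a nilpotent element of $\fp_i$ has $\fl_i$-component nilpotent, hence Killing-orthogonal to $\frakz(\fl_i)$. This gives $\lambda_i=0$.

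Surjectivity, together with the fact that generic $\mathcal P$-type tuples (those with $C_G(s_i)=L_i$ and generic) form a nonempty open subset of $\prod_i\frakz(\fl_i)$ (nonempty by genericity of $\mathcal P$), produces a fibre element $(\Ad_{g_i}(s_i+n_i))_i$. Writing $s_i+n_i=\Ad_{p_i}(s_i)$ with $p_i\in U_i$ finishes this direction.

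\textbf{Converse.} We reduce to the Borel case. Pick $B\subseteq P_i$ for all $i$ (standard parabolics). Since $s_i\in\frakz(\fl_i)$ and $\Ad_{p_i}(s_i)\in s_i+\fn_{P_i}$, we can write $p_i=u_i\ell_i$ with $\ell_i\in L_i$ fixing $s_i$, and so $\Ad_{g_ip_i}(s_i)=\Ad_{g_iu_i}(s_i)$. The proof of Theorem \ref{thm:between2and3} then goes through essentially verbatim.

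Suppose some $t\in\mathrm{Stab}_G(g_iP_i)^{ss}$ has proper Levi centraliser $L$. Conjugate so that $T\subset L$, and use the Bruhat decompositions $G=U W P_i$ and $L=(U\cap L)W_L(B\cap L)$. We obtain an element $\sum_i w_i\cdot s_i$ lying in
\[
\ft\cap\Big(\textstyle\sum_i\Ad(\cdots)(\fn_{P_i})+\sum_i\Ad(\cdots)(\fn_L)\Big)\subseteq \ft\cap[\fl,\fl].
\]
This contradicts genericity of $(s_i)$ via Definition \ref{defe:genericelement}. The only new point is that the $\fn_{P_i}$ contribute no coroot directions outside $\Phi_L$, which holds since $\fn_{P_i}\subset\fn$.
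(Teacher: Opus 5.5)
Your proposal is correct and follows the paper's proof. The forward direction runs the surjectivity argument of Lemma \ref{lem:dominant} for $\phi_{\mathcal P}\colon X_{\mathcal P}\to\prod_i\frakz(\fl_i)$ (the paper's $\ft_i$ is $\frakz(\fl_i)$), and the converse reruns the proof of Theorem \ref{thm:between2and3} using $\Ad_{p_i}(s_i)\in s_i+\fn_{P_i}$ and $\fn_{P_i}\subset\fn$. You also supply details the paper leaves implicit: since a nilpotent $y\in\Ad_{g_i}(\fp_i)$ need not lie in $\Ad_{g_i}(\fn_{P_i})$, your observation that its $\fl_i$-component is nilpotent (hence $\kappa$-orthogonal to $\frakz(\fl_i)$) is genuinely needed; and for $y_s=0$ the cleanest torus is $S=Z(C_G(y_s))^\circ$, which lies in a maximal torus of each $g_iP_ig_i^{-1}$ and hence in the stabiliser, without appealing to the stabiliser's Lie algebra.
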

   \begin{proof}
   Let us assume that $(g_1P_1, \ldots , g_\ell P_\ell)$ is absolutely indecomposable. Let $\ft_i:=\{t\in \ft\,|\, C_G(t)\supset L_i\}$ and $\fn_i$ be the nilpotent radical of $\mathfrak{p}_i$.  When $P_i=B$ for some $i$, then $\ft_i=\ft $. Note that $\ft_i$ is a subalgebra of $\ft$ since every element $t$ in $\ft_i$ satisfies that $\alpha( t)=0$ for all $\alpha \in \Phi_{L_i}$. 
   Then following Theorem \ref{conj:between2and3}, let us consider
 \[
 \phi_\mathcal{P}\,:\, X_\mathcal{P}\rightarrow \times_{i=1}^\ell \ft_i \quad \text{given by }\ \phi((X_1, \ldots , X_\ell))=(\Ad_{g_1^{-1}}(\tilde{t_1}),\ldots , Ad_{g_\ell^{-1}}(\tilde{t_\ell})),
 \]
 where 
 \[
 X_\mathcal{P}:=\{ (X_1, \ldots, X_\ell)\in \Ad_{g_1}(\ft_1+\fn_1)\times \ldots \times \Ad_{g_\ell}(\ft_\ell+\fn_\ell)\,|\, X_1+\cdots +X_\ell=0\}.
 \]
Let us take a generic tuple $(s_1,  \ldots , s_\ell)\in \times_{i=1}^\ell \ft_i $ such that $C_G(s_i)=L_i$. Then since every component in $\phi_\mathcal{P}^{-1}((s_1,  \ldots , s_\ell))$ is again semisimple, we can prove this direction by showing that $\phi_\mathcal{P}$ is surjective. The surjectivity can be shown by following the proof of Lemma \ref{lem:dominant} with $ \phi_\mathcal{P}$ and $X_\mathcal{P}$ instead of $\phi$ and $X$.
The proof of the converse direction is the exactly same with the proof of Theorem \ref{thm:between2and3} since $\Ad_{p_i}(s_i)=s_i+n_i$ for some $n_i \in \fn_i$ due to the assumption that $C_G(s_i)=L_i$.
   \end{proof}

\subsection{Analogue of \cite[Proposition 5.7]{locally}}\label{s:analogueof5.7}

Let us  consider the following set of absolutely indecomposable elements:
  \begin{equation}\label{eq:kac}
 	\mathcal{K}_G^{\mathcal{P}} \colonequals \{p \in G/P_1\times \ldots \times G/P_\ell \mid C_G(t)^\circ \text{ is an isolated pseudo-Levi }  \forall t\in \mathrm{Stab}_G(p)^{ss}\} ,
 \end{equation}
 where  $\mathcal{P}=(P_1 , \ldots , P_\ell)$.   Let $\mathcal{K}_G^{\mathcal{P}} /G$ denote the set of orbits of $\mathcal{K}_G^{\mathcal{P}} $ under $G$-action. From \S\ref{sss:moti-future}, our future goal is to compute the size $|(\mathcal{K}_G^{\mathcal{P}} /G)^F|$ carefully. In this subsection, we consider the term $|(\mathcal{K}_G^{\mathcal{P}} /G)^F|$ with the following generic linear character of $G^F$.

Let $\zeta \in \Irr(G^F)$ be a \emph{generic linear character}, which means that 
\begin{enumerate}
\item $\dim(\zeta)=1$ 
\item$\zeta(g)=\zeta(g_s)$ for every $g\in G^F$  (here, $g_s$ is a semisimple element  under the Jordan decomposition $g=g_sg_u=g_ug_s$)
\item $\zeta(t)=1$ if $C_G(t)^\circ$ is isolated for any semisimple element $t$
\item  $\zeta(t)\neq 1$ if $C_G(t)^\circ$ is contained in a proper Levi subgroup for a semisimple element $t$. 
\end{enumerate}
Then we have the following analogue of \cite[Proposition 5.7]{locally}.   Let $G_{\mathcal{P}}\colonequals G^F/P_1^F\times \ldots \times G^F/P_\ell^F$, and  $\mathbb{C}[G_{\mathcal{P}}]$ be the $\mathbb{C}$-vector space of functions from $G_{\mathcal{P}}$ to $\mathbb{C}$, and $\dim \mathbb{C}[G_{\mathcal{P}}]_\zeta$ is the multiplicity of $\zeta$ appearing in $\mathbb{C}[G_{\mathcal{P}}]$ (in other words, the number of $\zeta$-isotypic components of $\mathbb{C}[G_{\mathcal{P}}]$) by considering $\mathbb{C}[G_{\mathcal{P}}]$ as a $G^F$-representation under the natural action of $G^F$.
\begin{prop}We have
\begin{equation}\label{eq:analogue5.7}
|(\mathcal{K}_{G}^{\mathcal{P}}/G)^F|=\dim \mathbb{C}[G_{\mathcal{P}}]_\zeta, \end{equation}
where $\zeta$ is a generic linear character.
\end{prop}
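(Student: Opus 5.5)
We compute $\dim \mathbb{C}[G_{\mathcal{P}}]_\zeta=\langle \mathbb{C}[G_{\mathcal{P}}],\zeta\rangle_{G^F}$ by Frobenius reciprocity orbit by orbit, in the spirit of \cite[Proposition 5.7]{locally}. As a $G^F$-representation, $\mathbb{C}[G_{\mathcal{P}}]$ is the permutation representation on the finite set $G_{\mathcal{P}}$. It therefore decomposes as $\bigoplus_{O}\mathbb{C}[O]$ over the $G^F$-orbits $O\subset G_{\mathcal{P}}$. For a point $x\in O$ we have $\mathbb{C}[O]\cong \Ind_{\mathrm{Stab}_{G^F}(x)}^{G^F}(1)$. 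Hence
\[
\dim \mathbb{C}[G_{\mathcal{P}}]_\zeta=\sum_{O}\langle \Res^{G^F}_{\mathrm{Stab}_{G^F}(x)}\zeta,1\rangle,
\]
and since $\zeta$ is linear each summand is $1$ if $\zeta$ is trivial on $\mathrm{Stab}_{G^F}(x)=\mathrm{Stab}_G(x)^F$, and $0$ otherwise.

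The key step is to show that $\zeta|_{\mathrm{Stab}_G(x)^F}$ is trivial if and only if $x\in \mathcal{K}_G^{\mathcal{P}}$. By property (2), $\zeta(g)=\zeta(g_s)$. The stabiliser of $x$ is an intersection of parabolic subgroups, hence closed under taking Jordan components, so $g_s\in\mathrm{Stab}_G(x)^F$. Triviality therefore reduces to checking semisimple $F$-stable elements $t$ of the stabiliser.
\begin{enumerate}
\item If $x\in\mathcal{K}_G^{\mathcal{P}}$, then $C_G(t)^\circ$ is isolated for every such $t$, and property (3) gives $\zeta(t)=1$.
\item If $x\notin\mathcal{K}_G^{\mathcal{P}}$, there is a semisimple $t\in\mathrm{Stab}_G(x)$ whose centraliser lies in a proper Levi subgroup. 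Property (4) gives $\zeta(t)\neq1$, provided we can take $t$ to be $F$-stable.
\end{enumerate}
For the second case, note that $x$ is $F$-rational, so $S=\mathrm{Stab}_G(x)$ is $F$-stable. Failure of absolute indecomposability means $S^\circ/Z_G$ is not unipotent, so $S^\circ$ contains an $F$-stable torus $S_0\not\subset Z_G$; such a torus exists because a maximal torus of $S^\circ$ can be chosen $F$-stable by Lang–Steinberg. Then $C_G(S_0)$ is a proper Levi subgroup. For $q$ large, $S_0^F$ contains a regular element $t$ with $C_G(t)^\circ=C_G(S_0)$. This is the standard argument: avoid the finitely many root kernels, each of which cuts out a proper subgroup of $S_0^F$ of size $O(q^{\dim S_0-1})$.

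It follows that
\[
\dim\mathbb{C}[G_{\mathcal{P}}]_\zeta=\#\{G^F\text{-orbits on }(\mathcal{K}_G^{\mathcal{P}})^F\}.
\]
It remains to compare $G^F$-orbits on $(\mathcal{K}_G^{\mathcal{P}})^F$ with $(\mathcal{K}_G^{\mathcal{P}}/G)^F$. By Lang's theorem, the $G^F$-orbits inside the $F$-points of an $F$-stable $G$-orbit $G\cdot x$ are parametrised by $H^1(F,\mathrm{Stab}_G(x)/\mathrm{Stab}_G(x)^\circ)$. Lang's theorem on the connected group $G$ also shows every $F$-stable orbit has a rational point. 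So the comparison holds as soon as $\mathrm{Stab}_G(x)$ has trivial $F$-twisted component group for $x\in\mathcal{K}_G^{\mathcal{P}}$, or one counts orbits with the appropriate mass.

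I expect this last comparison to be the main obstacle. My first attempt is to show that $\mathrm{Stab}_G(x)$ is connected. Since $x$ is a tuple of parabolics, the stabiliser is an intersection of parabolics of $G$. Such an intersection contains a maximal torus of $G$ (Bruhat), hence is a connected group of the form $T'\ltimes$ unipotent (cf.\ \cite[Proposition 3.5.2]{carter1985finite}), and connectedness follows. If this fails in general, I would instead interpret $|(\mathcal{K}_G^{\mathcal{P}}/G)^F|$ as the number of $G^F$-orbits of rational points, which is what the computation above produces directly.
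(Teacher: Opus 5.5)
Your core argument is the paper's. The paper expands $(\mathbb{C}[G_{\mathcal{P}}],\zeta)_{G^F}$ as $\frac{1}{|G^F|}\sum_{p}\sum_{g\in\mathrm{Stab}_{G^F}(p)}\zeta(g)$ and uses that $\zeta$ is trivial on $\mathrm{Stab}_{G^F}(p)$ exactly when $p$ is absolutely indecomposable. It then finishes with Burnside's lemma; your orbit-by-orbit Frobenius reciprocity is the same identity. That final Burnside step produces exactly the number of $G^F$-orbits on $\mathcal{K}_G^{\mathcal{P}}\cap G_{\mathcal{P}}$, which is your fallback reading of $|(\mathcal{K}_G^{\mathcal{P}}/G)^F|$. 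The paper never compares this with $F$-fixed points of the orbit space.

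Your connectedness argument is false and should be dropped. Bruhat gives a common maximal torus only for two parabolics. For $p\in\mathcal{K}_G^{\mathcal{P}}$ the stabiliser never contains a maximal torus of $G$ (unless $G$ is a torus), because $\mathrm{Stab}_G(p)^\circ$ is unipotent modulo $Z_G$. For the same reason, every semisimple element of $\mathrm{Stab}_G(p)$ outside $Z_G^\circ$ lies outside $\mathrm{Stab}_G(p)^\circ$, so stabilisers are genuinely disconnected in general. For example, take $G=\SL_2$ and three distinct points of $\mathbb{P}^1$. The stabiliser is $\{\pm1\}$, and the single $G$-orbit of such triples splits into two $\SL_2(\mathbb{F}_q)$-orbits. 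The element $t$ of the appendix gives another instance. So the two counts can really differ, and the statement must be read as counting $G^F$-orbits of rational points, as in your fallback.

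Your treatment of an $F$-rational $t$ is more careful than the paper's, which just invokes properties (3)--(4). However, your root-kernel count is not uniform over arbitrary $F$-stable non-central tori. For instance, in $\SL_3$ the split torus $\lambda(\mathbb{G}_m)$ with $\lambda=\check{\alpha}_1+(3-q)\check{\alpha}_2$ has $\alpha_1\equiv1$ on all its $F$-points, so no $t$ there has $C_G(t)^\circ=C_G(\lambda(\mathbb{G}_m))$.

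It does work for your $S_0$, by the following argument.
\begin{enumerate}
\item $Z(C_G(S_0))^\circ$ lies in every maximal torus of $G$ containing $S_0$.
\item Each $g_iP_ig_i^{-1}$ contains such a maximal torus, so $Z(C_G(S_0))^\circ\subset\mathrm{Stab}_G(x)^\circ$.
\item Maximality of $S_0$ then forces $S_0=Z(C_G(S_0))^\circ$.
\item Connected centres of $F$-stable Levi subgroups come in finitely many types, so the constants are uniform and $q\gg0$ suffices.
\end{enumerate}
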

\begin{proof}The proof is same with the proof of \cite[Proposition 5.7]{locally} as follows: 
\[\begin{split}
  \dim\mathbb{C}[G_{\mathcal{P}}]_\zeta&= ( \mathbb{C}[G_{\mathcal{P}}],\zeta)_{G^F}\\
  &=\frac{1}{|G^F|}\sum_{g\in G^F} | \{ p\in G_{\mathcal{P}}\,|\, g\cdot p =p \}|\cdot \zeta(g)\\
  &= \frac{1}{|G^F|}\sum_{p \in G_{\mathcal{P}}}\sum_{g\in \mathrm{Stab}_{G^F}(p)} \zeta(g)\\
  &=
  \frac{1}{|G^F|}\sum_{p\in G_\mathcal{P}}\langle 1, \zeta\rangle_{\mathrm{Stab}_{G^F}(p)}|\mathrm{Stab}_{G^F}(p)|\\
  &= \frac{1}{|G^F|}\sum_{p \in\mathcal{K}_G^{\mathcal{P}}}|\mathrm{Stab}_{G^F}(p)|\\
  &= |(\mathcal{K}_G^{\mathcal{P}}/G)^F|,
\end{split}
\]
where $( \, ,   )_H$ denotes the inner product of $\mathbb{C}[H]$ of a finite group $H$ as defined in \cite[\S3]{locally}. The fifth equality comes from the definition of the generic linear character, i.e., $\zeta$ is trivial if and only if $p$ is absolutely indecomposable, and the last equality comes from the Burnside lemma.
\end{proof}

\begin{rem}

Note that we also have the identity 
$$|(\mathcal{K}_G^{\mathcal{P}}/G)^F|=\frac{1}{|G^F|}\sum_{g\in G^F} \left(\prod_{i=1}^\ell R_{L_i^F}^{G^F}(1)(g)\right) \zeta(g),$$
where $\zeta$ is a generic linear character (cf. \cite[Proposition 3.3.6]{Let16}). Our future goal is to relate $\zeta$ to a generic tuple $(\theta_1, \ldots , \theta_\ell)$ from Definition \ref{defe:genericchar}. The main difficulty is that $\zeta$ is a linear character of $G^F$, whereas each $\theta_i$ is a linear character of $T^F$. 
In \cite{Let16}, this issue is resolved using the determinant. 

Furthermore, the observations in \cite[\S3--4]{Let16} raise the problem of computing the multiplicity and comparing it with the size of an additive character variety, since we previously determined the size of this variety using the Deligne--Lusztig character of $\fg^F$. This reasoning leads to the results presented in \S\ref{s:regularsemisimplechar}.
\end{rem}

\subsubsection{Existence of a generic linear character}
In this subsection, we consider the form of a generic linear character under a special assumption. Let us recall that our assumption that the derived subgroup of $G$ is simply connected, and so $C_G(s)$ is connected for any semisimple element $s$ of $G$. From \cite[Proposition 7.6.4]{carter1985finite}, we can construct the following character:
\[
\zeta\colonequals \sum_{\substack{(\tilde{T},\tilde{\theta} )\in \eta\\ \text{mod}\ G^F}}\frac{R_{\tilde{T}}^G({\tilde{\theta}})}{\langle R_{\tilde{T}}^G({\tilde{\theta}}),R_{\tilde{T}}^G({\tilde{\theta}})\rangle},
\]
where the sum extends over pair $({\tilde{T}},{\tilde{\theta}})$, one in each $G^F$-class of such pairs, and $\eta$ is the geometric conjugate of a pair $(T,\theta)$ such that \begin{itemize}
    \item[(P1)] $\tilde{w}\cdot \tilde{\theta}=\tilde{\theta}$ for any $\tilde{w}\in W_G({\tilde{T}})^F:=N_G(\tilde{T})^F/\tilde{T}^F$ for each $(\tilde{T},\tilde{\theta})\in \eta$;
    \item[(P2)] $\theta(s)=1$ if $C_G(s)$ is isolated  for $s\in T^F$;
    \item[(P3)]  $\theta(s)\neq1$ if $C_G(s)$ is not isolated for $s\in T^F$.
\end{itemize}
In this subsection, ${\tilde{T}}$ can be non-split.
In addition, let us assume that $s\in Z_{C_G(s)}^\circ$ for any semisimple element $s\in G$. As an example, if $G=\GL_n$, there exists a central character $\theta$ which satisfies the above three conditions, and $s\in Z_{C_G(s)}^\circ$ for any semisimple element $s\in G$.

With these assumptions, let us check that $\zeta$ is a generic linear character. For any $g\in G$, let $g=g_sg_u=g_ug_s$ using the Jordan decomposition, and we have that $\langle R_{\tilde{T}}^G({\tilde{\theta}}),R_{\tilde{T}}^G({\tilde{\theta}}) \rangle =|W_G({\tilde{T}})^F|$ from \cite[Theorem 7.3.4]{carter1985finite} and $\tilde{w}\cdot \tilde{\theta}=\tilde{\theta}$ for any $({\tilde{T}},{\tilde{\theta}})\in \eta$.
 Then we have 
\[
\begin{split}
    \zeta(g)&=\sum_{\substack{(\tilde{T},\tilde{\theta} )\in \eta\\ \text{mod}\ G^F}}\frac{R_{\tilde{T}}^G(\tilde{\theta})(g)}{|W_G(\tilde{T})^F|}\\
    &=\sum_{\substack{(\tilde{T},\tilde{\theta} )\in \eta\\ \text{mod}\ G^F}}\frac{1}{|W_G(\tilde{T})^F|}\cdot \frac{1}{|H^F|} \sum_{\substack{ x\in G^F\\ x^{-1}g_sx\in \tilde{T}}} Q_{\tilde{T}}^{H}(g_u)\tilde{\theta}(x^{-1}g_sx)\\
    &= \sum_{\substack{(\tilde{T},\tilde{\theta} )\in \eta\\ \text{mod}\ H^F}}\frac{1}{|W_G(\tilde{T})^F|}\cdot \frac{1}{|H^F|} \cdot \frac{|W_G(\tilde{T})^F||H^F|}{|W_{H}(\tilde{T})^F|} Q_{\tilde{T}}^{H}(g_u)\tilde{\theta}(g_s)\\
    &= \theta(g_s)\sum_{\substack{(\tilde{T},\tilde{\theta} )\in \eta\\ \text{mod}\ H^F}}   \frac{Q_{\tilde{T}}^{H}(g_u)}{|W_{H}(\tilde{T})^F|} \\
    &=\theta(g_s)1_{ {H}}(g_u)\\
    &=\theta(g_s)
\end{split}
\] 
for $H=C_G(g_s)$ and $1_H$ the trivial character of $H^F$. The third equality comes from the assumption (P1) and \cite[Lemma 10]{nam25}. Note that $\tilde{\theta}(g_s)=0$ if $\tilde{T}$ does not contain $g_s$. Therefore, we only need to consider $\tilde{T}\subset C_G(g_s)$, so we can reduce the sum over $H=C_G(g_s)$, not $G$.
The forth equality comes from the fact that for geometric conjugate pairs $(T_1,\theta_1)$ and $(T_2,\theta_2)$ in $\eta$, we have that $\theta_1(s) =\theta_2(s)$ if $s\in Z_H^\circ$ from \cite[Lemma 2.3.7]{geck2020character}. The fifth equality comes from the fact that $1_G=\sum_{\substack{{\tilde{T}}\\ \text{mod}\ G^F}} \frac{R_{\tilde{T}}^G(1)}{|W_G({\tilde{T}})^F|}$ as proved in \cite[Corollary 7.6.5]{carter1985finite}.

This computation shows that $\zeta$ is one-dimensional, and satisfies the condition of a generic linear character. Therefore, under this special condition, we can construct a generic linear character. We believe that this has a strong relation with the character $1^\chi$ defined in \cite[\S5.1]{locally}. A merit of this definition is that we do not need to use the determinant.

  \section{Multiplicity of generic regular semisimple characters} \label{s:regularsemisimplechar}
 
From now on, let us consider Theorem \ref{thm:maininintro}.
Let us assume that $\theta_1, \ldots , \theta_{\ell}\in 
\widehat{T^F}$ are \emph{split}, \emph{generic}, and \emph{in general position}. Here, split means that $T$ is a split maximal torus of $G$.
In this case, we have that the Deligne-Lusztig characters $R_T^G(\theta_i) = \mathrm{Ind}_B^G(\theta_i)$ are irreducible, so let us denote them as $\chi_{\theta_i}$. Note that these characters are called \emph{semisimple character} in general, cf. \cite[\S2.6]{geck2020character}.

\subsection{Character values}
We use the following well-known result about the value of the Deligne-Lusztig character.
\begin{thm}\cite[Theorem 2.2.16]{geck2020character}\label{thm:charactervalue}
    Let $g\in G^F$ and $g=g_sg_u=g_ug_s$, where $g_s$ is semisimple and $g_u$ is unipotent. Let $H=C_G(g_s)$. Then 
    \[
    R_T^G(\theta)(g)=\frac{1}{|H^F|}\sum_{\substack{x\in G^F\\ x^{-1}g_sx\in T}}Q_{xTx^{-1}}^H(g_u)\theta(x^{-1}g_sx).
    \]
\end{thm}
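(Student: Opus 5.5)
The statement is the character formula for Deligne--Lusztig characters (cited from Geck--Malle), so my plan is to derive it from the character formula and standard properties of Green functions.

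\textbf{Step 1: the character formula.} Start from the character formula for Deligne--Lusztig induction. For $g=g_sg_u$ with $H=C_G(g_s)^\circ$, which here equals $C_G(g_s)$ because $[G,G]$ is simply connected, one has
\[
R_T^G(\theta)(g)=\frac{1}{|T^F||H^F|}\sum_{\substack{x\in G^F\\ x^{-1}g_sx\in T}} |{}^xT^F|\, Q_{xTx^{-1}}^H(g_u)\,\theta(x^{-1}g_sx).
\]
This is obtained from the Lefschetz-number definition $R_T^G(\theta)(g)=\frac{1}{|T^F|}\sum_{t\in T^F}\mathcal{L}(g,t^{-1})\,\theta(t)$.

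\textbf{Step 2: localize the Lefschetz number.} Apply the Deligne--Lusztig fixed-point formula $\mathcal{L}(g,t^{-1})=\mathcal{L}(g_u, (g_s,t^{-1})\text{-fixed points})$. The points of $\tilde X=\{x U: x^{-1}F(x)\in F(U)\}$ fixed by the semisimple pair $(g_s,t^{-1})$ decompose into pieces indexed by those $x\in G^F$ with $x^{-1}g_sx=t$, modulo $H^F$. Each piece is isomorphic to the Deligne--Lusztig variety of $H$ attached to ${}^xT$ and the Borel ${}^xB\cap H$. Its $g_u$-Lefschetz number is therefore the Green function $Q^H_{{}^xT}(g_u)$.

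\textbf{Step 3: resum.} Summing over $t$ and converting the sum over $H^F$-cosets into a sum over all of $G^F$ introduces the factor $1/|H^F|$. The $|T^F|$ factors cancel, because $t$ is determined by $x$ through $t=x^{-1}g_sx$, and ${}^xT^F$ has the same order as $T^F$. This yields exactly the displayed formula.

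\textbf{Main obstacle.} The substantive step is the geometric identification in Step 2: showing that the fixed locus is a disjoint union of $H$-Deligne--Lusztig varieties, and that the Lefschetz number of $g_u$ on each piece is $Q^H_{{}^xT}(g_u)$, independent of the choice of Borel subgroup of $H$. This is the core of Deligne--Lusztig's Theorem 4.2, and I would cite it rather than reprove it.

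In this paper's setting the argument simplifies somewhat. Since $T$ is maximally split, $R_T^G(\theta)=\Ind_{B^F}^{G^F}(\theta)$, so one could instead verify the formula directly from the induced-character formula. The Green function $Q^H_{{}^xT}(g_u)$ is then the number of ${}^xB\cap H$-type Borel subgroups of $H$ fixed by $g_u$, that is, a permutation character of $H^F/({}^xB\cap H)^F$. The remaining work would be to check that the Borel subgroups of $G$ containing $g_s$ are matched up correctly with $x$ and the Borel subgroups of $H$, up to the Weyl-group multiplicities.
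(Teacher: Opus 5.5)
The paper gives no proof of this statement. It quotes it from Geck--Malle, where it is the Deligne--Lusztig character formula (Deligne--Lusztig, Theorem 4.2), and your Steps 1--3 are the standard argument behind that citation.

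One bookkeeping claim in Step 2 is wrong, although the final formula survives. The piece of the $(g_s,t^{-1})$-fixed locus indexed by $x$ is the analogue $\tilde X_H$ of $\tilde X$ for ${}^xT\subset{}^xB\cap H$. It is not the quotient $X_H=\tilde X_H/({}^xT)^F$. So its $g_u$-Lefschetz number is $|({}^xT)^F|\,Q^H_{{}^xT}(g_u)$, not $Q^H_{{}^xT}(g_u)$. To see this, note that $Q^H_{{}^xT}(g_u)=\mathcal{L}(g_u,X_H)=|({}^xT)^F|^{-1}\sum_{t'}\mathcal{L}((g_u,t'),\tilde X_H)$. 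Only $t'=1$ contributes, because each $t'\neq 1$ acts freely.

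That factor $|({}^xT)^F|$ is what cancels the $1/|T^F|$ in the Lefschetz-number definition, and it is exactly the factor you wrote into Step 1. The reason you give in Step 3, merging the sums over $t$ and $x$ because $t=x^{-1}g_sx$, produces no such factor on its own.

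Your alternative for split $T$ is also sound and more elementary. It computes $\Ind_{B^F}^{G^F}(\theta)(g)$ by grouping the $F$-stable Borel subgroups containing $g$ according to their intersection with $H$. It leads directly to the form the paper actually uses, Corollary~\ref{cor:charactervalue}.
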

 Recall that $H=C_G(g_s)$ is connected for any semisimple element $g_s$ since the derived subgroup of $G$ is simply connected, see \cite[\S2.2.6]{KNWG}.

\begin{rem}\label{rem:vanishingdeligne}From \cite[Example 2.2.17 (a)]{geck2020character},
when $\theta\in \widehat{T^F}$, if $g_s \notin T^F $ (up to $G^F$-conjugation) for $g=g_sg_u=g_ug_s$, then $R_T^G(\theta)(g_g)=0$. Therefore,   we need to consider $g_s\in T^F$ (up to $G^F$-conjugation) for $g=g_sg_u=g_ug_s$.
\end{rem}

For ease of computation, we need the following lemma.
\begin{lem}\label{lem:decompositionofcen}

    For a semisimple element $t$ in $T^F$, we have  \begin{equation}\label{eq:weyl-iso}\{ x\in G^F \,|\, xtx^{-1}\in T^F\}=\underset{v\in N_G(T)^F /T^F }{\cup}\dot{v} H^F=\underset{v\in  N_G(T)^F /N_{H}(T)^F }{\sqcup}\dot{v} H^F=\underset{v\in  W  /W_{H}  }{\sqcup}\dot{v} H^F ,
    \end{equation} 
     where $\dot{v}$ is a representative of $v  $, $H=C_G(t)$ and $W_H$ is the Weyl group of $T$ in $H$.
\end{lem}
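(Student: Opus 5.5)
I will prove the three equalities in \eqref{eq:weyl-iso} in turn. Since $H=C_G(t)$ is connected, $T\subset H$ is a maximal torus of $H$, and everything is $F$-stable because $t\in T^F$ and $T$ is split.

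\emph{First equality.} The inclusion $\supseteq$ is immediate. If $x=\dot v h$ with $h\in H^F$ and $\dot v\in N_G(T)^F$, then $xtx^{-1}=\dot v t\dot v^{-1}\in T^F$.

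For $\subseteq$, take $x\in G^F$ with $xtx^{-1}\in T^F$. Then $T\subset C_G(xtx^{-1})=xHx^{-1}$, so both $x^{-1}Tx$ and $T$ are $F$-stable maximal tori of $H$. Here I must be careful, because two $F$-stable maximal tori of $H$ need not be $H^F$-conjugate. The point is that both are maximally split in $H$. Indeed, $x^{-1}Tx$ is split because $x\in G^F$ and $T$ is split. $T$ is split by hypothesis. Split maximal tori of the connected group $H$ lie in $F$-stable Borel subgroups of $H$, and they are $H^F$-conjugate. This is standard: $F$-stable Borels of $H$ are $H^F$-conjugate, and maximally split tori in a fixed $F$-stable Borel are conjugate under its $F$-fixed points (Lang's theorem applied to the connected unipotent radical and to the Borel; cf.\ \cite[\S1.18]{carter1985finite}).

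So there is $h\in H^F$ with $h^{-1}x^{-1}Txh=T$. Then $xh\in N_G(T)^F$, and hence $x\in N_G(T)^F H^F$, which is the union in the statement.

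\emph{Second equality.} The cosets $\dot vH^F$ and $\dot v'H^F$ coincide if and only if $\dot v^{-1}\dot v'\in H^F\cap N_G(T)^F=N_H(T)^F$. Hence the union is disjoint when indexed by $N_G(T)^F/N_H(T)^F$.

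\emph{Third equality.} I identify $N_G(T)^F/T^F=W^F=W$, which holds since $T$ is split and $G$ untwisted, so $F$ acts trivially on $W$. Similarly $N_H(T)^F/T^F=W_H$, again by Lang's theorem applied to the connected group $T$. This gives
\[
N_G(T)^F/N_H(T)^F\cong W/W_H .
\]

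The main obstacle is the conjugacy step in the first equality, namely ensuring that $x^{-1}Tx$ and $T$ are $H^F$-conjugate and not merely $H$-conjugate. I will handle it via the maximally split argument above. The remaining steps are routine bookkeeping.
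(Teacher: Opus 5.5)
Your proof is correct, but at the rationality step of the first equality you take a different route from the paper.

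\textbf{First equality.} The paper only uses that $T$ and $x^{-1}Tx$ are $H$-conjugate. It writes $x=\dot v h$ with $h\in H$ and then descends to $F$-fixed points. To do so it claims that $F(\dot v)^{-1}\dot v=F(h)h^{-1}$ lies in the set of representatives intersected with $H$, and is therefore $1$. You instead observe that both tori are split, hence maximally split, maximal tori of the $F$-stable connected group $H$. So the conjugating element can be chosen in $H^F$ from the start, and $x\in N_G(T)^F H^F$ follows at once.

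Your route is rigorous exactly where the paper is loose. A set of coset representatives is not stable under $\dot v\mapsto F(\dot v)^{-1}\dot v$, so the paper's descent needs a repair. One chooses the representatives in $N_G(T)^F$, which is possible because $N_G(T)^F\to W$ is onto. The remaining $N_H(T)$-factor is then absorbed into $h$, after which $h=\dot v^{-1}x$ is automatically $F$-fixed. That repaired argument is actually lighter than yours: it needs only $H$-conjugacy of maximal tori, not $H^F$-conjugacy of split ones.

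\textbf{Last equality.} You get $N_G(T)^F/T^F\cong W$ and $N_H(T)^F/T^F\cong W_H$ from Lang's theorem on the connected torus $T$. The paper instead appeals to non-degeneracy of tori for large $q$, via Carter's Corollary 3.6.5. Your version shows that this hypothesis is not needed here, because the statement involves $N_G(T)^F$ rather than $N_{G^F}(T^F)$.
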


\begin{proof}  It is obvious  $\underset{v\in N_G(T)^F /T^F }{\cup}\dot{v}{H}=\underset{v\in N_G(T)^F /N_{{H}}(T)^F }{\sqcup}\dot{v}{H }$, so  this gives the second equality in Equation \eqref{eq:weyl-iso}. Let us prove the last equality in Equation \eqref{eq:weyl-iso}. Since $q$ is sufficiently large so that every maximal torus $T^F$ is non-degenerate, we obtain $$N_G(T)^F /N_{{H}}(T)^F\simeq (N_G(T)^F/T^F) /(N_{{H}}(T)^F/T^F)\simeq W^F/W_{H}^F$$ from \cite[Corollary 3.6.5]{carter1985finite}. Since $T$ is split, we have $W^F=W$ and $W_{H}^F=W_H$, 
 and this gives the last equality.

Now, let us show the first equality in Equation \eqref{eq:weyl-iso}. The inclusion $\{ x\in G^F \,|\, xtx^{-1}\in T^F\}\supset \underset{v\in N_G(T)^F /N_{H}(T)^F }{\sqcup}\dot{v}{H^F}$ is easy to check, so let us consider the converse inclusion by showing that any element $x\in G^F$ satisfying $xsx^{-1}\in T^F$ can be written as $\dot{v}h$ for some $v\in N_G(T)^F /N_{H}(T)^F $ and $h \in H^F$. 
Recall that when $xtx^{-1}\in T$, we have $x^{-1}Tx\subset H$, cf. \cite[Proposition 3.5.2]{carter1985finite}. This implies that $T, x^{-1}Tx\subset H$, and so we can find $h\in H$ such that $hx^{-1}Txh^{-1}=T$. Therefore, \[
    hx^{-1} \in N_G(T)\Rightarrow xh^{-1}\in N_G(T)\Rightarrow x\in N_G(T) H\simeq (N_G(T)/N_{H}(T)) H.\]
Then we have a decompose $x=\dot{v}h $ for some $v\in N_G(T)/N_{H}(T)$ and $h\in H$. 
We can finish the proof from the fact  $\{ \dot{v}\in G \,|\, v\in N_G(T)/N_{H}(T)\} \cap H=\{1\}$.   Since $x=F(x)\in G^F$, we have $$\dot{v}h=F(\dot{v}h)=F(\dot{v})F(h)\Rightarrow F(\dot{v})^{-1}\dot{v}=F(h)h^{-1}\in \{  \dot{v}\in G \,|\, v\in N_G(T)/N_{H}(T)\} \cap H,$$ and this implies that $F(\dot{v})^{-1}\dot{v}=F(h)h^{-1}=1\Rightarrow \dot{v},h\in G^F.$ Therefore, we have the desired decomposition $\dot{v}\in G^F$  and $h\in H^F$, and so we are done.
\end{proof}

\begin{cor}
\label{cor:charactervalue}
    With the same notations in Theorem \ref{thm:charactervalue},  
    \[
    R_T^G(\theta)(g)=\frac{Q_T^H(g_u)}{|W_H|}\sum_{w\in W}(w\cdot \theta)(g_s).
    \]
    Here, $W_H$ is the Weyl group of $H=C_G(g_s)$.  
\end{cor}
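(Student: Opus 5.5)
The plan is to start from Theorem \ref{thm:charactervalue} and rewrite the index set of the sum using Lemma \ref{lem:decompositionofcen}. By Remark \ref{rem:vanishingdeligne} we may replace $g$ by a $G^F$-conjugate and assume $g_s\in T^F$; both sides are class functions, so nothing is lost. Lemma \ref{lem:decompositionofcen} then gives
\[
\{x\in G^F \mid x^{-1}g_sx\in T\} = \bigsqcup_{v\in W/W_H} \dot v^{-1}H^F ,
\]
applied with $x^{-1}$ in place of $x$. One small point to check is that $x^{-1}g_sx\in T$ forces $x^{-1}g_sx\in T^F$, which holds because $x\in G^F$.

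Next I evaluate each summand on a coset $x=\dot v^{-1}h$ with $h\in H^F$. First, $x^{-1}g_sx = h^{-1}\dot v g_s \dot v^{-1} h$. Since $h$ centralises $g_s$, I would reorder the factorisation as $x=h'\dot v^{-1}$, i.e. use right cosets $H^F\dot v^{-1}$; the lemma's argument works equally for these. Then $x^{-1}g_sx=\dot v g_s\dot v^{-1}$, so $\theta(x^{-1}g_sx)=(v^{-1}\cdot\theta)(g_s)$ for the appropriate convention of the $W$-action. Second, $xTx^{-1}=h'\dot v^{-1}T\dot v h'^{-1}=h'Th'^{-1}$, since $\dot v\in N_G(T)$. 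Green functions of $H$ are invariant under $H^F$-conjugation of the torus, so $Q^H_{xTx^{-1}}(g_u)=Q^H_T(g_u)$. Hence each coset contributes $|H^F|\,Q_T^H(g_u)\,(v\cdot\theta)(g_s)$, up to relabelling $v\mapsto v^{-1}$.

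Summing over the cosets and dividing by $|H^F|$ gives $Q_T^H(g_u)\sum_{v\in W/W_H}(v\cdot\theta)(g_s)$. To pass from $W/W_H$ to all of $W$, I note that for $w\in W_H$ we have $\dot w g_s\dot w^{-1}=g_s$, because $\dot w$ can be chosen in $N_H(T)\subset C_G(g_s)$. So $(vw\cdot\theta)(g_s)=(v\cdot\theta)(g_s)$, and the sum over $W$ equals $|W_H|$ times the sum over $W/W_H$. This yields the stated formula.

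The main obstacle is bookkeeping rather than substance. I need to pin down the left/right coset convention so that the reduction $xTx^{-1}=h'Th'^{-1}$ is legitimate, and fix the $W$-action convention on $\theta$. Since the sum runs over all of $W$, the inversion $v\leftrightarrow v^{-1}$ is harmless.
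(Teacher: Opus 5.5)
Your proposal is correct and follows the paper's argument, with the details the paper leaves implicit filled in: decompose the index set of Theorem~\ref{thm:charactervalue} via Lemma~\ref{lem:decompositionofcen}, note that each torus $xTx^{-1}$ is $H^F$-conjugate to $T$ so the Green function is constant, and pass from $W/W_H$ to $W$ because $W_H$ fixes $g_s$. One correction: inverting the lemma gives $\{x\in G^F\mid x^{-1}g_sx\in T\}=\bigsqcup_{v\in W/W_H}H^F\dot v^{-1}$ directly, so your first display with $\dot v^{-1}H^F$ is wrong. It cannot be repaired by ``reordering'', since $\dot v^{-1}h$ need not lie in $H^F\dot v^{-1}$; the right-coset computation you then carry out is the correct one.
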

\begin{proof}
    This can be proved using Lemma \ref{lem:decompositionofcen} with the fact that $Q_{T}^G=Q_{xTx^{-1}}^G$ for any $x \in G^F$, cf. \cite[Definition 2.2.15]{geck2020character}.
\end{proof}

\subsection{Multiplicity} Let us compute the term $\langle \Lambda\otimes  \chi_{\theta_1}\otimes \ldots \otimes \chi_{\theta_{\ell}},1\rangle$ appearing in Theorem \ref{thm:maininintro}. To compute this, we define types of elements in $G$. This is a multiplicative analogue of $\fg$-types in \cite[\S6.2.3]{KNWG}.
 The type of $g\in G$ is the tuple $([\Phi_{C_G(g_s)}],[g_u])$, where $\Phi_{C_G(g_s)}$ is the root subsystem of $C_G(g_s)$ in $\Phi$, and $[\ ]$ denotes its $W$-orbit and $C_G(s)$-orbit respectively. Let us denote the set of types of $G$ by $\mathcal{T}(G)$. Note that the set $\mathcal{T}(G)$ is finite, and independent on $k$ when the characteristic is very good. 
Then we can define the type map $\xi \,:\, G\rightarrow \mathcal{T}(G)$ given by $\xi(g)=([\Phi_{C_G(g_s)}],[g_u])$.

Then we have
\[\begin{split}
&|G^F|\langle \Lambda\otimes \chi_{\theta_1}\otimes \ldots \otimes \chi_{\theta_{\ell}},1\rangle\\
=&\sum_{g\in G} \Lambda(g)\chi_{\theta_1}(g)\cdots \chi_{\theta_{\ell}}(g)\\
=&\sum_{\tau \in \mathcal{T}(G)}\sum_{g\in \xi^{-1}(\tau)}  \Lambda(g) R_T^G(\theta_1)(g)\cdots R_T^G(\theta_{\ell})(g)
\\=&\sum_{\tau =([\Phi_L],[u])\in \mathcal{T}(G)} \frac{|W_L|}{|W|} q^{g\dim(H_\tau)}\sum_{\substack{s\in T^F \\ [\Phi_{C_G(s)}]=[\Phi_E]}} |\mathcal{O}_\tau| \frac{Q_T^L(u)^{\ell}}{|W_L|^{\ell}} \sum_{(w_1, \ldots ,w_{\ell})\in W^{\ell}} \prod_{i=1}^{\ell}(w_i\cdot\theta_i)(s)
\\=&\sum_{\tau =([\Phi_L],[u])\in \mathcal{T}(G)} \frac{ q^{g\dim(H_\tau)} |\mathcal{O}_\tau|Q_T^L(u)^{\ell}}{|W_L|^{\ell-1}|W|} \sum_{(w_1, \ldots ,w_{\ell})\in W^{\ell}}  \sum_{\substack{s\in T^F \\ [\Phi_{C_G(s)}]=[\Phi_L]}}\prod_{i=1}^{\ell}(w_i\cdot \theta_i)(s),
\end{split}\]
where $\mathcal{O}_\tau$ is the conjugacy class and $H_\tau=C_G(g)$ for some $g \in \xi^{-1}(\tau)$.
From Remark \ref{rem:vanishingdeligne}, we only need to consider $g\in G^F$ such that $g=g_sg_u=g_ug_s$ with $g_s\in T^F$ (up to conjugation), and so we only need to consider the root systems of pseudo-Levi subgroups containing $T$.

Let us change the last equation with the same reason in \cite[\S4.2.2]{KNWG} as follows:
\[\begin{split}
&\sum_{\tau =([\Phi_L],[u])\in \mathcal{T}(G)} \frac{ q^{g\dim(H_\tau)} |\mathcal{O}_\tau|Q_T^L(u)^{\ell}}{|W_L|^{n-1}|W|} \sum_{(w_1, \ldots ,w_{\ell})\in W^{\ell}}  \sum_{\substack{s\in T^F \\ [\Phi_{C_G(s)}]=[\Phi_L]}}\prod_{i=1}^{\ell}(w_i\cdot\theta_i)(s)\\
&=\sum_{\tau  \in \mathcal{T}(G)} \frac{ q^{g\dim(H_\tau)} |\mathcal{O}_\tau|Q_T^L(u)^{\ell}}{|W_L|^{n-1}|N_W(W_L)| } \sum_{(w_1, \ldots ,w_{\ell})\in W^{\ell}}  \sum_{\substack{s\in T^F \\ \Phi_{C_G(s)}=\Phi_L}}\prod_{i=1}^{\ell}(w_i\cdot\theta_i)(s).
\end{split}
\]

Then our next goal is to compute the last sum $$ \sum_{\substack{s\in T^F\\ \Phi_{C_G(s)}=\Phi_L}}\prod_{i=1}^{\ell}(w_i\cdot\theta_i)(s).$$  Let us denote the set of standard pseudo-Levi subsystems of $G$ as $\mathcal{E}(G)$, and this set is again finite obviously. Let us consider this set as a poset using the inclusion. Then we can use the M\"obius inversion on the poset with the M\"obius function $  \mu_{\mathcal{E}(G)}$ (cf. \cite{KNP,KNWG}), and so we have the following:
\[
\sum_{\substack{s\in T^F\\ \Phi_{C_G(s)}=\Phi_L}}\prod_{i=1}^{\ell}(w_i\cdot \theta_i)(s)=\sum_{\substack{\Phi_H \in \mathcal{E}(G)\\ \Phi_H \supset \Phi_L}}  \mu_{\mathcal{E}(G)}(\Phi_E,\Phi_H)\sum_{\substack{s\in T^F\\ \Phi_{C_G(s)} \supset \Phi_H}}\prod_{i=1}^{\ell}(w_i\cdot \theta_i)(s)
\]
since
\[
\sum_{\substack{s\in T^F\\\Phi(C_G(s) )\supset \Phi_L}}\prod_{i=1}^{\ell}(w_i\cdot\theta_i)(s) = \sum_{\substack{\Phi_H\in \mathcal{E}(G)\\ \Phi_H \supset \Phi_L}}\sum_{\substack{s\in T^F\\ \Phi_{C_G(s)}=\Phi_H}} \prod_{i=1}^{\ell}(w_i\cdot\theta_i)(s).
\]
Now, we use the same technique which is utilised in \cite[\S4]{KNWG}.
With the fact that $
\{s\in T^F\,|\, C_G(s)\supset H\}=Z_H^F,$  where $Z_H $ is the centre of $H$, we have that 
\[
\sum_{\substack{s\in T^F\\ \Phi_{C_G(s)} \supset \Phi_H}}\prod_{i=1}^{\ell}(w_i\cdot\theta_i)(s)=  \sum_{\substack{s\in Z_H^F}}\prod_{i=1}^{\ell}(w_i\cdot\theta_i)(s) = \begin{cases}
    |Z_H^F|\quad &\text{if } H\ \text{is isolated}\\
    0& \text{otherwise}
\end{cases}
\]
from the definition of genericity of $(\theta_1, \ldots , \theta_{\ell})$.
Therefore, we have the following result:
\[
\sum_{\substack{s\in T^F\\ \Phi_{C_G(s)}=\Phi_L}}\prod_{i=1}^{\ell}(w_i\cdot\theta_i)(s)=\sum_{\substack{\Phi_E\in \mathcal{E}(G)\\ E\text{ is isolated}}}  \mu_{\mathcal{E}(G)}(\Phi_L,\Phi_E)|Z_E^F|.
\]
Note that this value is the same for any $(w_1,\ldots ,w_{\ell})\in W^{\ell}$. Furthermore, $\mu_{\mathcal{E}(G)}(\Phi_L,\Phi_E)=0$ if $E$ does not contain $L$. 
Therefore, we have the following formula:
\begin{equation}\label{eq:resultmultiplicity}
   \begin{split}
  &   \langle \Lambda \otimes\chi_{\theta_1}\otimes\ldots \otimes \chi_{\theta_{\ell}},1\rangle\\
  =& \frac{1}{|G^F|}\sum_{\tau \in \mathcal{T}(G)}  \frac{q^{g\dim(H_\tau)}|\mathcal{O}_\tau|Q_T^L(u)^{\ell} }{|N_W(W_L)||W_L|^{\ell-1}}  \sum_{(w_1, \ldots ,w_{\ell})\in W^{\ell}}\sum_{\substack{E\in \mathcal{E}(G)\\ E\text{ is isolated}}}  \mu_{\mathcal{E}(G)}(\Phi_L,\Phi_E)|Z_E^F|\\
     =&\frac{1}{|G^F|}\sum_{\tau \in \mathcal{T}(G)}  \frac{q^{g\dim(H_\tau)} |\mathcal{O}_\tau|Q_T^L(u)^{\ell}  |W|^{\ell}}{|N_W(W_L)| |W_L|^{\ell-1}} \sum_{\substack{E\in \mathcal{E}(G)\\ E\text{ is isolated}}}  \mu_{\mathcal{E}(G)}(\Phi_L,\Phi_E)|Z_E^F|\\
     =& \sum_{\tau \in \mathcal{T}(G)}  \frac{q^{g\dim(H_\tau)} Q_T^L(u)^{\ell}   |W|^{\ell-1}|[\Phi_L]|}{|C_L(u)^F||W_L|^{\ell-1}} \sum_{\substack{E\in \mathcal{E}(G)\\ E\text{ is isolated}}}  \mu_{\mathcal{E}(G)}(\Phi_L,\Phi_E)|Z_E^F|
\end{split}
\end{equation}
 from the fact that  $|\mathcal{O}_\tau|=\frac{|G^F|}{|C_L(u)^F|}$ and $\frac{1}{|N_W(W_L)|}=\frac{|[\Phi_L]|}{|W|}$.

 \subsection{The additive character variety}
 Let us recall the size of additive character varieties from \cite[\S6]{KNWG}. Similar to types of $G$, let us define types of $\fg$. In \cite{KNWG}, the authors used subgroups, but here we replace subgroups by subsystems.
\begin{defe}
The types of $\fg$ is the set of tuples $([\Phi_L],[n])$, denoted by $\mathcal{T}(\fg)$, where $[\Phi_L]$ is $W$-orbit of the root subsystem of a standard Levi subgroup $L$ in $G$, and $[n]$ is an adjoint orbit of a nilpotent element $n$ in $\mathfrak{l}^F=\mathrm{Lie}(L)^F$.
\end{defe}

With this definition, we can get the following result. Let $\varpi$ be the Springer isomorphism, i.e., a $G$-equivariant isomorphism from the nilpotent cone of $\fg$ to the unipotent variety of $G$. This isomorphism exists when $\mathrm{char}(k)$ is very good for $G$ from \cite[\S 2.7.5]{letellier2005fourier}.  
\begin{thm} Let us assume that $ (O_1, \ldots , O_{\ell})$ is a tuple of generic split regular semisimple orbits. Then we have the following result from \cite[\S6.6]{KNWG}.
	
	\begin{equation}\label{eq:additive}
       |\mathcal{X}_\fg^F|=	|Z_G^F|q^{\gamma_G} \sum_{\tau=([\Phi_L],[n]) \in \mathcal{T}(\fg)}  \frac{q^{g\dim(H_\tau)}|[\Phi_L]|  |W|^{\ell-1} Q_T^L(\varpi(n))^{\ell}}{|W_L|^{\ell-1}|C_L(n)^F|} 
	 \mu_{\mathcal{L}(G)}(\Phi_L,\Phi)  ,
	\end{equation}
	where $\gamma_G= \dim(Z_G) +(g-1)\dim(\fg) + \ell|\Phi^+|$, and $\mathcal{L}(G)$ is the set of standard Levi subsystems of $G$ and $\mu_{\mathcal{L}(G)}$ the M\"obius function on the poset $\mathcal{L}(G)$.\end{thm}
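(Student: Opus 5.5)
The formula is quoted from \cite[\S6.6]{KNWG}. I would reprove it by the standard Fourier-transform computation on $\fg^F$, written in the language of types so that it runs parallel to the multiplicative computation \eqref{eq:resultmultiplicity}.

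\textbf{Step 1 (Fourier-transform expression).} Fix a nontrivial additive character $\psi$ of $\Fq$ and the Killing form $\kappa$, which is non-degenerate since the characteristic is very good. Genericity (Definition \ref{defe:genericoriginal}, checked through Lemma \ref{lem:equivalence}) implies that $G/Z_G$ acts freely on the solution set. So the GIT quotient is geometric, and
\[
|\mathcal{X}_\fg^F|=\frac{|Z_G^F|}{|G^F|}\,\#\Bigl\{\sum[X_i,Y_i]+\sum Z_j=0\Bigr\}^F .
\]
Counting solutions by Fourier analysis gives
\[
\frac{1}{|\fg^F|}\sum_{x\in\fg^F}\Bigl(\sum_{X,Y}\psi(\kappa(x,[X,Y]))\Bigr)^g\prod_{j=1}^{\ell}\mathcal{F}(1_{O_j^F})(x).
\]
The commutator factor equals $|\fg^F|\,|C_\fg(x)^F|$, which produces the weight $q^{g\dim H_\tau}$. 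For a split regular semisimple orbit $O_j=G\cdot s_j$, I would express $\mathcal{F}(1_{O_j^F})$ by the Lie-algebra Deligne--Lusztig induction $R_{\ft}^{\fg}$ of the character $\psi(\kappa(s_j,\cdot))$ of $\ft^F$, as in \cite{letellier2005fourier}. Its value at $x=x_s+x_n$ comes from the additive analogue of Corollary \ref{cor:charactervalue}:
\[
q^{|\Phi^+|}\,\frac{Q_T^L(\varpi(x_n))}{|W_L|}\sum_{w\in W}\psi\bigl(\kappa(w\cdot s_j,x_s)\bigr),
\]
with $L=C_G(x_s)$ a Levi subgroup. Collecting the powers of $q$ gives $q^{\gamma_G}$.

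\textbf{Step 2 (sort by types).} Group the $x$ according to their type $\tau=([\Phi_L],[n])\in\mathcal{T}(\fg)$. Exactly as in the group case, the conjugacy-class size and the $W$-orbit count turn into the factor $|[\Phi_L]|\,|W|^{\ell-1}/(|W_L|^{\ell-1}|C_L(n)^F|)$. What remains is the torus sum
\[
\sum_{\substack{x_s\in\ft^F\\ \Phi_{C_G(x_s)}=\Phi_L}}\psi\Bigl(\kappa\bigl(\textstyle\sum_i w_i s_i,\,x_s\bigr)\Bigr).
\]

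\textbf{Step 3 (Möbius inversion; the main step).} Apply Möbius inversion on the poset $\mathcal{L}(G)$ of standard Levi subsystems. The condition $\Phi_{C_G(x)}\supset\Phi_M$ for $x\in\ft$ means $x\in\fz_M^F$, so the torus sum becomes
\[
\sum_{M\supset L}\mu_{\mathcal{L}(G)}(\Phi_L,\Phi_M)\sum_{x\in\fz_M^F}\psi\Bigl(\kappa\bigl(\textstyle\sum_i w_i s_i,\,x\bigr)\Bigr).
\]
The inner sum equals $|\fz_M^F|$ if $\sum_i w_i s_i\perp\fz_M$, and $0$ otherwise. Under $\kappa$, the orthogonal complement of $\fz_M$ in $\ft$ is $\ft\cap[\fm,\fm]$. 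For a proper Levi subgroup $M$, Definition \ref{defe:genericelement} says $\sum_i w_i s_i\notin\ft\cap[\fm,\fm]$, so these terms vanish. For $M=G$, the sum $\sum_i w_i s_i$ lies in $[\fg,\fg]$, because the Theorem's genericity hypothesis requires the existence of solutions, so the term survives. Only $M=G$ contributes, which yields the factor $\mu_{\mathcal{L}(G)}(\Phi_L,\Phi)$ times $|\fz^F|$. The $|\fz^F|$ should cancel against the normalisation and leave $|Z_G^F|$.

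The main obstacle is the bookkeeping of constants. One has to check that the factors $|\fz^F|$, $|\fg^F|$ and $q^{\ell|\Phi^+|}$ coming from the Fourier normalisations, together with the Springer identification $\varpi$ that turns Lie-algebra Green functions into $Q_T^L(\varpi(n))$, combine into exactly $|Z_G^F|\,q^{\gamma_G}$.
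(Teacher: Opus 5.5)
Your Steps 2 and 3 reproduce the Fourier-transform computation of \cite[\S6]{KNWG}. That is all the paper relies on here: it only cites that section and remarks that the computation carries over to simply connected $[G,G]$. These steps are sound. Writing $R$ for the solution set, they give $|R^F|=|G^F|\,q^{\gamma_G}\sum_{\tau}(\cdots)\,\mu_{\mathcal{L}(G)}(\Phi_L,\Phi)$. Here $|G^F|$ comes from the class sizes $|G^F|/|C_L(n)^F|$, and $|\mathfrak{z}^F|=q^{\dim Z_G}$ is the $\dim(Z_G)$ inside $\gamma_G$; it does not ``cancel to leave $|Z_G^F|$''. So the factor $|Z_G^F|$ in the statement has to come entirely from your Step 1, and that step fails.

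First, genericity does not make $G/Z_G$ act freely. It only rules out proper Levi and parabolic subalgebras. A solution lying in $\mathrm{Lie}(E)$ for a proper isolated pseudo-Levi $E$ is therefore fixed by $Z_E\supsetneq Z_G$. The $\SO_5$ example in the appendix is exactly such a solution, fixed by $t=\mathrm{diag}(-1,-1,-1,-1,1)$. What the stabiliser argument actually gives is that stabilisers are finite modulo $Z_G$, and that is enough. All orbits are then closed, and Lang's theorem for the connected group $G/Z_G$ shows that every $F$-stable orbit has exactly $|(G/Z_G)^F|$ rational points. Hence $|\mathcal{X}_\fg^F|=|R^F|/|(G/Z_G)^F|$.

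Second, and this cannot be repaired, $|(G/Z_G)^F|=|G^F|\,|H^1(F,Z_G)|/|Z_G^F|$. This equals $|G^F|/|Z_G^F|$ only when $H^1(F,Z_G)=1$, for instance when $Z_G$ is connected, which is the standing hypothesis of \cite{KNWG}. Under the paper's weaker hypothesis the formula itself fails. Take $G=\SL_2$ ($q$ odd), $g=0$, $\ell=3$, and split orbits with eigenvalues $\pm a_j$ satisfying $\pm a_1\pm a_2\pm a_3\neq 0$. A direct count gives $|R^F|=q(q^2-1)=|\PGL_2(\Fq)|$, so $\mathcal{X}_\fg$ is a single point. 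By your own Steps 2 and 3, however, the right-hand side equals $|Z_G^F|\,|R^F|/|G^F|=2$: indeed $\gamma_G=0$ and the type sum is $\frac{(q+1)^2}{q(q-1)}+\frac{1}{q}-\frac{4}{q-1}=1$. So your argument proves the statement when $Z_G$ is connected, or in general with $|Z_G^F|$ replaced by $|G^F|/|(G/Z_G)^F|$, but not as stated in the generality of this paper.
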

Recall that  $\dim(H_\tau)=\dim(C_L(\varpi(n)))=\dim(C_L(n))$ since $\varpi$ is $G$-equivariant.

\begin{rem}
Note that in \cite{KNWG}, authors consider the case that the centre of $G$ is connected. However, the computation works also to our case, i.e.,  the derived subgroup of $G$ is simply connected. The difference is that we could not guarantee the polynomial count as in \cite[\S6.6.2]{KNWG}.
 \end{rem}

 \subsection{Properties of M\"{o}bius function}
 To compare $\langle \Lambda \otimes \chi_{\theta_1}\otimes \ldots \otimes \chi_{\theta_{\ell}},1\rangle $ (Equation \eqref{eq:resultmultiplicity}) and 
$       |\mathcal{X}_\fg^F|$ (Equation \eqref{eq:additive}), we need information about M\"obius functions on $\mathcal{E}(G)$ and $\mathcal{L}(G)$. Let us introduce such information in this subsection. 
Recall that we have two M\"{o}bius functions in this paper.
\begin{enumerate}
    \item Let $\mathcal{E}(G)$ be the poset (via inclusion) of standard pseudo-Levi subsystems in $G$, then we have the corresponding M\"obius function, denoted by $\mu_{\mathcal{E}(G)}$.
    \item Let $\mathcal{L}(G)$ be the poset (via inclusion) of standard Levi subsystems in $G$, then we have the corresponding M\"obius function, denoted by $\mu_{\mathcal{L}(G)}$.
\end{enumerate}

\subsubsection{Galois connection theorem}
Note that for any pseudo-Levi subsystem $\Psi$ of $\Phi$, there exists a unique minimal Levi subsystem $\overline{\Psi}$ of $\Phi$ containing $\Psi$. This is because if $\Psi=w\cdot \langle S \rangle $ for some $S\subset \tilde{\Delta}$, we can take a Levi subsystem $\mathrm{Span}_{\mathbb{R}}(w\cdot S)\cap \Phi$. This is minimal due to the dimension and unique from the construction.

Let us take maps $f\,:\, \mathcal{L}(G)\rightarrow \mathcal{E}(G)$ is the inclusion map and  $g\,:\, \mathcal{E}(G)\rightarrow \mathcal{L}(G)$ given by $g(\Psi)=\overline{\Psi}$ when $\overline{\Psi}$ is the minimal unique Levi subsystem containing $\Psi$.   Note that $f(\Psi)\leq \Omega \iff \Psi \leq g(\Omega)$ for $\Psi\in \mathcal{L}(G)$ and $\Omega \in \mathcal{E}(G)$.
Note that taking $g(\Psi)=\overline{\Psi}$ satisfies the closure operator conditions in \cite[Page 45]{kung2009combinatorics}, and the set of closed points of $\mathcal{E}(G)$ is $\mathcal{L}(G)$. 
 Then we have the following result.

\begin{lem}\label{lem:galoisconnection}
For a Levi subsystem $\Psi$ of $\Phi$, we have
\[
\sum_{\substack{\Omega\in \mathcal{E}(G)\\f(\Omega)=\Phi }}\mu_{\mathcal{E}(G)}(\Psi,\Omega)= \mu_{\mathcal{L}(G)}(\Psi,\Phi).
\]
If $\Psi$ is pseudo-Levi, but not Levi, then we have $\sum_{\substack{\Omega\in \mathcal{E}(G)\\f(\Omega)=\Phi }}\mu_{\mathcal{E}(G)}(\Psi,\Omega)=0$.
\end{lem}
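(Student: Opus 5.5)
The plan is to read the sum as running over those $\Omega\in\mathcal{E}(G)$ whose Levi closure $g(\Omega)=\overline{\Omega}$ equals $\Phi$; this is how I interpret the notation $f(\Omega)=\Phi$ in the statement. Under this reading the lemma is the special case $y_0=\Phi$ of Rota's closure theorem (cf. \cite{kung2009combinatorics}) for the closure operator $\Psi\mapsto\overline{\Psi}$ on $\mathcal{E}(G)$, whose closed elements form $\mathcal{L}(G)$. I would prove that theorem directly, since the argument is short.

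First I would check the three closure axioms for $\Psi\mapsto \overline{\Psi}$.
\begin{enumerate}
\item It is extensive: $\Psi\subseteq\overline{\Psi}$.
\item It is monotone: if $\Psi\subseteq\Psi'$, then $\overline{\Psi'}$ is a Levi subsystem containing $\Psi$, so minimality gives $\overline{\Psi}\subseteq\overline{\Psi'}$.
\item It is idempotent, because $\overline{\Lambda}=\Lambda$ for every Levi subsystem $\Lambda$.
\end{enumerate}
Minimality and uniqueness come from the fact that an intersection of Levi subsystems $\Phi\cap U_1\cap\Phi\cap U_2=\Phi\cap(U_1\cap U_2)$ is again Levi. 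Equivalently, $\overline{\Psi}=\Phi\cap\mathrm{Span}_{\mathbb{R}}(\Psi)$, which is the description already given in the paper. The key consequence is that for $\Lambda\in\mathcal{L}(G)$ and $\Omega\in\mathcal{E}(G)$,
\[
\Omega\subseteq\Lambda \iff \overline{\Omega}\subseteq\Lambda .
\]
Note also that $\Phi$ itself lies in $\mathcal{L}(G)$ and is the top element of both posets.

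Next, fix $\Psi\in\mathcal{E}(G)$ and define, for each closed $\Lambda\in\mathcal{L}(G)$,
\[
F(\Lambda):=\sum_{\substack{\Omega\in\mathcal{E}(G)\\ \overline{\Omega}=\Lambda}}\mu_{\mathcal{E}(G)}(\Psi,\Omega).
\]
For any $\Lambda_0\in\mathcal{L}(G)$, sum $F$ over the closed $\Lambda\subseteq\Lambda_0$. By the equivalence above, the $\Omega$ that occur are exactly the $\Omega\subseteq\Lambda_0$. Hence
\[
\sum_{\substack{\Lambda\in\mathcal{L}(G)\\ \Lambda\subseteq\Lambda_0}}F(\Lambda)=\sum_{\Psi\subseteq\Omega\subseteq\Lambda_0}\mu_{\mathcal{E}(G)}(\Psi,\Omega)=\delta_{\Psi,\Lambda_0},
\]
where the last equality is the defining recursion of the M\"obius function.

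Now split into two cases according to whether $\Psi$ is Levi.

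If $\Psi$ is Levi, the identity says $\sum_{\Lambda\subseteq\Lambda_0}F(\Lambda)=\delta_{\Psi,\Lambda_0}$ on $\mathcal{L}(G)$. Here $F(\Lambda)=0$ unless $\Psi\subseteq\Lambda$, since $\Psi\subseteq\Omega\subseteq\overline{\Omega}$. By uniqueness of M\"obius inversion on the finite poset $\mathcal{L}(G)$, this forces $F(\Lambda)=\mu_{\mathcal{L}(G)}(\Psi,\Lambda)$. Taking $\Lambda=\Phi$ gives the first claim.

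If $\Psi$ is pseudo-Levi but not Levi, then $\Psi\neq\Lambda_0$ for every closed $\Lambda_0$, so the right-hand side is identically $0$. Induction on $\Lambda_0$ in $\mathcal{L}(G)$, starting from the minimal elements, then gives $F\equiv 0$. In particular $F(\Phi)=0$, which is the second claim.

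The only step needing genuine care is the verification that minimal Levi closure is well defined, i.e.\ that Levi subsystems are closed under intersection. Once that holds, the equivalence $\Omega\subseteq\Lambda\iff\overline{\Omega}\subseteq\Lambda$ follows, and the rest is formal. I would also state explicitly that the intended indexing condition is $g(\Omega)=\Phi$, because $f$ as defined is the inclusion map on $\mathcal{L}(G)$ and so cannot be applied to $\Omega\in\mathcal{E}(G)$.
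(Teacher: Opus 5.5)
Your proof is correct and follows the paper's route. The paper simply invokes Rota's Galois connection (closure) theorem from \cite{kung2009combinatorics} for the closure $\Psi\mapsto\overline{\Psi}$ on $\mathcal{E}(G)$, whose closed elements are $\mathcal{L}(G)$, and you prove that special case directly via $\Omega\subseteq\Lambda\iff\overline{\Omega}\subseteq\Lambda$ and M\"obius inversion on $\mathcal{L}(G)$. Your reading of the index condition as $g(\Omega)=\overline{\Omega}=\Phi$ is the right one, and so is the direction of your adjunction (the paper's displayed ``$f(\Psi)\leq\Omega\iff\Psi\leq g(\Omega)$'' is reversed), provided $\mathcal{E}(G)$ and $\mathcal{L}(G)$ are taken to be all pseudo-Levi and Levi subsystems with respect to $T$, as your use of $\Phi\cap U$ implicitly assumes.
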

\begin{proof}
This is the Galois connection theorem in \cite[\S 3.1.8]{kung2009combinatorics} with the fact that the set of closed points of $\mathcal{E}(G)$ is $\mathcal{L}(G)$.
\end{proof}

\subsubsection{Restriction of the M\"obius function}
Let us consider a proper isolated pseudo-Levi subsystem $\Phi_E$ and
\[
\mathcal{E}(E):=\{ \Psi\subset \Phi_E\,|\, \Psi \in \mathcal{E}(G)\}.
\]
Then $\mathcal{E}(E)$ is a subset of pseudo-Levi subsystems of $E$. This is  a convex subposet of $\mathcal{E}(G)$, which means that whenever $\Psi_1,\Psi_2\in \mathcal{E}(E)$ and $\Psi_1 \subset \Psi \subset \Psi_2$ for some $\Psi \in \mathcal{E}(G)$, then $\Psi\in \mathcal{E}(E)$. In this case, we have the following result.
 \begin{lem}\label{lem:restriction}Let $E$ be an isolated pseudo-Levi subgroup of $G$. Then
 the M\"obius function $\mu_{\mathcal{E}(E)}$ is the restriction function of $\mu_{\mathcal{E}(G)}$ on $\mathcal{E}(E) \times \mathcal{E}(E)$.
 \end{lem}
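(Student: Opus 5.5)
The approach is the standard fact about M\"obius functions of convex subposets, applied to the inclusion $\mathcal{E}(E)\subset\mathcal{E}(G)$. The point to establish first is that $\mathcal{E}(E)$ is convex in $\mathcal{E}(G)$. Let $\Psi_1,\Psi_2\in\mathcal{E}(E)$ and let $\Psi\in\mathcal{E}(G)$ with $\Psi_1\subset\Psi\subset\Psi_2$. Then $\Psi\subset\Psi_2\subset\Phi_E$, so $\Psi\in\mathcal{E}(E)$ by the very definition of $\mathcal{E}(E)$, which is $\{\Psi\in\mathcal{E}(G)\mid \Psi\subset\Phi_E\}$. In fact $\mathcal{E}(E)$ is a lower set (order ideal) of $\mathcal{E}(G)$. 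In particular, for $\Psi_1\subset\Psi_2$ in $\mathcal{E}(E)$, the interval computed in $\mathcal{E}(E)$ agrees with the interval computed in $\mathcal{E}(G)$:
\[
[\Psi_1,\Psi_2]_{\mathcal{E}(E)}=\{\Psi\in\mathcal{E}(G)\mid \Psi_1\subset\Psi\subset\Psi_2\}=[\Psi_1,\Psi_2]_{\mathcal{E}(G)} .
\]

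Next I would recall that the M\"obius function of a finite poset is determined by its intervals. Both functions vanish when $\Psi_1\not\subset\Psi_2$. On the diagonal, $\mu(\Psi_1,\Psi_1)=1$ in both posets. For $\Psi_1\subsetneq\Psi_2$ both satisfy the same recursion
\[
\mu(\Psi_1,\Psi_2)=-\sum_{\Psi_1\subset\Psi\subsetneq\Psi_2}\mu(\Psi_1,\Psi).
\]
I would then run induction on the cardinality of the interval $[\Psi_1,\Psi_2]$. By the interval equality above, the recursion for $\mu_{\mathcal{E}(E)}(\Psi_1,\Psi_2)$ ranges over exactly the same elements $\Psi$ as the recursion for $\mu_{\mathcal{E}(G)}(\Psi_1,\Psi_2)$. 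Each such $\Psi$ lies in $\mathcal{E}(E)$ and the interval $[\Psi_1,\Psi]$ is strictly smaller, so the inductive hypothesis gives $\mu_{\mathcal{E}(E)}(\Psi_1,\Psi)=\mu_{\mathcal{E}(G)}(\Psi_1,\Psi)$. Hence the two values agree, and $\mu_{\mathcal{E}(E)}=\mu_{\mathcal{E}(G)}|_{\mathcal{E}(E)\times\mathcal{E}(E)}$.

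The argument has essentially no hard step. The only subtlety is to be careful that $\mathcal{E}(E)$ is defined as a subset of $\mathcal{E}(G)$ (standard pseudo-Levi subsystems of $G$ lying in $\Phi_E$) with the induced order. It is not defined intrinsically as the poset of pseudo-Levi subsystems of $E$. With the paper's definition, convexity is immediate and the isolatedness of $E$ plays no role in this lemma. If one instead wanted the intrinsic poset of $E$, one would additionally need that every pseudo-Levi subsystem of $\Phi_E$ is a pseudo-Levi subsystem of $\Phi$ (in the spirit of Lemma \ref{lem:levi-E-pseudo-levi-G}), which can fail in general; I would avoid this by working with the definition as stated.
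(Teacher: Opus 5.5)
Your proof is correct and takes the same route as the paper. The paper notes that $\mathcal{E}(E)$ is a convex subposet of $\mathcal{E}(G)$ and cites the standard fact (Corollary A1 of its reference) that the M\"obius function of a convex subposet is the restriction; you prove that fact directly from the equality of intervals and the defining recursion, and you are also right that, with $\mathcal{E}(E)$ defined as a subset of $\mathcal{E}(G)$, the isolatedness of $E$ plays no role here.
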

 \begin{proof}
Please see the proof of Corollary A1 in \cite{neil1989remarks}.
 \end{proof}
 
 \begin{rem}It is obvious that we can apply Galois connection theorem (Lemma \ref{lem:galoisconnection}) to $\mathcal{E}(E)$ and $\mathcal{L}(E)$. Note that although $\Psi $ is a Levi subsystem of $\Phi_E$, but it does not need to be a Levi subsystem in $\Phi$ since if $\Psi=\Phi_E\cap U$ for a subspace $U$, but $\Phi\cap U $ might be strictly larger than $\Psi$.
 \end{rem}

\subsection{Main result} \label{s:firstmainresult}
 Under the generic condition, recall our result Equation \eqref{eq:resultmultiplicity}:
\[
   \langle \Lambda \otimes \chi_{\theta_1}\otimes \ldots \otimes \chi_{\theta_{\ell}},1\rangle=  \sum_{\tau \in \mathcal{T}(G)}  a_\tau \sum_{\substack{E\in \mathcal{E}(G)\\ E\text{ is isolated}}}  \mu_{\mathcal{E}(G)}(\Phi_L,\Phi_E)|Z_E^F|,\]
   where $$a_\tau=a_{([\Phi_L],[u])} \colonequals \frac{q^{g\dim(H_\tau)} Q_T^L(u)^{\ell}   |W|^{\ell-1}|[\Phi_L]|}{|C_L(u)^F||W_L|^{\ell-1}}.$$    
To get our result, we need the following lemma.
\begin{lem}
For any a pseudo-Levi subgroup $L$ of $G$, we have \[
 \sum_{\substack{E\in \mathcal{E}(G)\\ E\text{ is isolated}}}  \mu_{\mathcal{E}(G)}(\Phi_L,\Phi_E)|Z_E^F|=  \sum_{\substack{E\in \mathcal{E}(G)\\ E\text{ is isolated}}}\mu_{\mathcal{L}(E)}(\Phi_L,\Phi_E)  N(E) ,
 \]
 where $N(E)=|\{s\in G^F\,|\, C_G(s)=E\}|$.
\end{lem}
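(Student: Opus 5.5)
Both sides can be read as the same count of semisimple elements of $T^F$, organised through two different posets; the proof consists of matching these two Möbius inversions.

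First we express $|Z_E^F|$ by its stratification. For a standard isolated pseudo-Levi $E$, we have $Z_E^F=\{s\in T^F\mid \Phi_{C_G(s)}\supseteq \Phi_E\}$. Hence
\[
|Z_E^F|=\sum_{\substack{\Omega\in\mathcal{E}(G)\\ \Omega\supseteq\Phi_E}} N'(\Omega),\qquad N'(\Omega):=|\{s\in T^F\mid \Phi_{C_G(s)}=\Omega\}|.
\]
Since $E$ is isolated, the only candidates for $\Omega$ are $\Phi_E$ itself and possibly $\Phi$, because an isolated pseudo-Levi is maximal among proper pseudo-Levis. It is cleaner to run the argument in the other direction: substitute this expression into the left-hand side and swap the sums. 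This turns the left-hand side into
\[
\sum_{\Omega} N'(\Omega)\sum_{\substack{E \text{ isolated}\\ \Phi_L\subseteq\Phi_E\subseteq\Omega}}\mu_{\mathcal{E}(G)}(\Phi_L,\Phi_E).
\]
For $\Omega$ isolated, the inner sum reduces to the single term $E=\Omega$, which gives $\mu_{\mathcal{E}(G)}(\Phi_L,\Phi_E)N'(\Phi_E)$. We then identify $N'(\Phi_E)$ with $N(E)$, counted up to the appropriate normalisation (elements $s\in T^F$ with $C_G(s)=E$ exactly). The term $\Omega=\Phi$ (central $s$) has to be tracked separately. Its coefficient is the sum of $\mu_{\mathcal{E}(G)}(\Phi_L,\Phi_E)$ over the isolated $E$ lying between $\Phi_L$ and $\Phi$. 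I expect this to be absorbed by the convention that $\Phi$ itself counts as an isolated pseudo-Levi, so that $E=G$ appears in the sum with $N(G)=|Z_G^F|$.

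Second, we convert $\mu_{\mathcal{E}(G)}$ into $\mu_{\mathcal{L}(E)}$. By Lemma \ref{lem:restriction}, on $\mathcal{E}(E)$ the function $\mu_{\mathcal{E}(G)}$ agrees with $\mu_{\mathcal{E}(E)}$, so the interval $[\Phi_L,\Phi_E]$ may be computed inside $E$. We then apply the Galois connection, Lemma \ref{lem:galoisconnection}, with $E$ in place of $G$, using the closure $\Psi\mapsto\overline{\Psi}^E$ (the minimal Levi subsystem of $\Phi_E$ containing $\Psi$). If $\Phi_L$ is a Levi subsystem of $\Phi_E$, the sum of $\mu_{\mathcal{E}(E)}(\Phi_L,\Omega)$ over $\Omega$ with closure $\Phi_E$ equals $\mu_{\mathcal{L}(E)}(\Phi_L,\Phi_E)$. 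If $\Phi_L$ is not a Levi subsystem of $\Phi_E$, this sum is $0$, consistent with interpreting $\mu_{\mathcal{L}(E)}(\Phi_L,\Phi_E)=0$. The $\Omega$ whose closure in $E$ equals $\Phi_E$ are exactly the isolated pseudo-Levis of $E$ containing $\Phi_L$. The pseudo-Levis of $E$ that are pseudo-Levis of $G$ are controlled by Lemma \ref{lem:levi-E-pseudo-levi-G}, which realises every Levi subsystem of $\Phi_E$ as some $\Phi_t$.

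The key regrouping step is therefore to rewrite the left-hand side as a double sum over pairs $\Omega\subseteq\Phi_E$, with $\Omega$ closed-to-$\Phi_E$ inside $E$, weighted by counts of $s$ with centraliser exactly $\Omega$. The Galois-connection identity then collapses the $\Omega$-sum to $\mu_{\mathcal{L}(E)}(\Phi_L,\Phi_E)$, leaving the count $N(E)$ of elements whose centraliser is exactly $E$.

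I expect the main obstacle to be the bookkeeping. One issue is that an $s$ counted in $|Z_E^F|$ may have a centraliser that is a larger isolated pseudo-Levi, or $G$ itself. Another is that different $E$ can share the same closure data, and that $N(E)$ is a count over $G^F$ rather than over $T^F$, which introduces a factor involving $W$-conjugates. I would first verify the identity in rank one, i.e.\ for $\mathrm{SL}_2$, where the isolated pseudo-Levis are $T$-free and only $G$ occurs, and then for $\mathrm{Sp}_4$, before fixing the normalisations in general.
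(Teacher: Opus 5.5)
The skeleton of your proposal matches the paper's proof: write $|Z_E^F|=\sum_{K\supseteq E}N(K)$, swap the sums, and convert the inner Möbius sum with Lemma \ref{lem:restriction} and Lemma \ref{lem:galoisconnection}. However, there is a genuine gap at the point where you collapse the swapped sum.

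\textbf{The false step.} You claim that an isolated pseudo-Levi is maximal among proper pseudo-Levis, so that for isolated $\Omega$ the inner sum reduces to the single term $E=\Omega$. This fails in general.
\begin{itemize}
\item For $G=\Sp_4\times\Sp_4$ there is the chain of isolated pseudo-Levis $(\SL_2\times\SL_2)^2\subsetneq(\SL_2\times\SL_2)\times\Sp_4\subsetneq G$.
\item Even for simple $G$ it fails: in $F_4$, a $W$-conjugate of the isolated $A_3\times\widetilde{A}_1$ (remove the node with coefficient $4$) lies in the isolated $B_4$.
\end{itemize}
This is not a harmless shortcut; it changes the answer. Take $\Phi_L=\Phi_{(\SL_2\times\SL_2)^2}$ and $K=(\SL_2\times\SL_2)\times\Sp_4$. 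Your collapse gives $N(K)$ the coefficient $\mu_{\mathcal{E}(G)}(\Phi_L,\Phi_K)=-1$. The correct coefficient is $\mu(\Phi_L,\Phi_L)+\mu(\Phi_L,\Phi_K)=0$, which equals $\mu_{\mathcal{L}(K)}(\Phi_L,\Phi_K)$ because $\Phi_L$ has full rank in $\Phi_K$ and so is not Levi there.

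\textbf{The fix.} Use the tools you already cite, but uniformly, with no collapse.
\begin{itemize}
\item Treat every isolated $K$ the same way, including $K=G$. $G$ is isolated by the paper's definition (not contained in any proper Levi), so no extra convention is needed, and $N(G)=|Z_G^F|$.
\item Keep the full inner sum $\sum_{E\text{ isolated},\ \Phi_L\subseteq\Phi_E\subseteq\Phi_K}\mu_{\mathcal{E}(G)}(\Phi_L,\Phi_E)$.
\item Rewrite it with $\mu_{\mathcal{E}(K)}$ via Lemma \ref{lem:restriction}.
\item Collapse it to $\mu_{\mathcal{L}(K)}(\Phi_L,\Phi_K)$ via Lemma \ref{lem:galoisconnection} applied inside $K$.
\end{itemize}
The last step uses that $\Omega\subseteq\Phi_K$ is isolated in $\Phi$ if and only if its Levi closure in $\Phi_K$ is $\Phi_K$, since both mean that $\Omega$ spans $V$. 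The same argument shows every $K\supseteq E$ is automatically isolated.

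\textbf{Two further corrections.}
\begin{itemize}
\item Your ``key regrouping'' paragraph puts the weight on the wrong index. The count $N(K)$ belongs to the larger subsystem, and the Möbius sum runs over the smaller isolated $E\subseteq K$. As written, collapsing the $\Omega$-sum would not leave $N(E)$.
\item There is no $W$-normalisation issue to fix. $C_G(s)=E\supseteq T$ forces $s\in Z_E^F\subseteq T^F$, so $N(E)$ equals your $N'(\Phi_E)$ exactly.
\end{itemize}
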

For convenience, we define $\mu_{\mathcal{L}(E)}(\Phi_L,\Phi_E)=0$ if $\Phi_L$ is not Levi in $\Phi_E$.
\begin{proof}
Since $s\in Z_E^F \iff E\subset C_G(s)$, we have that 
\[
|Z_E^F|=\sum_{E\leq K \leq G}N(K),
\]
where $H\leq G$ means that $H$ is a subgroup of $G$.
Then we have
\[
\begin{split}
\sum_{\substack{E\in \mathcal{E}(G)\\ E\text{ is isolated}}}  \mu_{\mathcal{E}(G)}(\Phi_L,\Phi_E)|Z_E^F|&=\sum_{\substack{E\in \mathcal{E}(G)\\ E\text{ is isolated}}}  \mu_{\mathcal{E}(G)}(\Phi_L,\Phi_E)\sum_{E\leq K \leq G}N(K)
\\
&=\sum_{\substack{K\in \mathcal{E}(G)\\ K\text{ is isolated}}}  N(K)\sum_{\substack{E\leq K  \\ E\text{ is isolated}}}\mu_{\mathcal{E}(G)}(\Phi_L,\Phi_E) .
\end{split}
\]
Then  we have
\[
\sum_{\substack{E\leq K  \\ E\text{ is isolated}}}\mu_{\mathcal{E}(G)}(\Phi_L,\Phi_E) =\sum_{\substack{E\leq K  \\ E\text{ is isolated}}}\mu_{\mathcal{E}(K)}(\Phi_L,\Phi_E)
= \mu_{\mathcal{L}(K)}(\Phi_L,\Phi_K),
\]
where the first equality is the restriction theorem (Lemma \ref{lem:restriction}), and the last equality is   the Galois connection theorem (Lemma \ref{lem:galoisconnection}). In summary, we get that
\[
\begin{split}
\sum_{\substack{E\in \mathcal{E}(G)\\ E\text{ is isolated}}}  \mu_{\mathcal{E}(G)}(\Phi_L,\Phi_E)|Z_E^F| 
&=\sum_{\substack{K\in \mathcal{E}(G)\\ K\text{ is isolated}}}  \mu_{\mathcal{L}(K)}(\Phi_L,\Phi_K)N(K),
\end{split}
\]
and so we are done.
\end{proof}
With this lemma, we have the following result:
 
\[\begin{split}
   \langle \Lambda \otimes \chi_{\theta_1}\otimes \ldots \otimes \chi_{\theta_{\ell}},1\rangle &=  \sum_{\tau \in \mathcal{T}(G)} \alpha_\tau\sum_{\substack{E\in \mathcal{E}(G)\\ E\text{ is isolated}}}  \mu_{\mathcal{E}(G)}(\Phi_L,\Phi_E)|Z_E^F|\\
   &= \sum_{\substack{E\in \mathcal{E}(G)\\ E\text{ is isolated}}} N(E)\sum_{\tau \in \mathcal{T}(G)} \alpha_\tau \mu_{\mathcal{L}(E)}(\Phi_L,\Phi_E) 
   \end{split}
  \]
  We can easily see that each term 
  $\sum_{\tau \in \mathcal{T}(G)} \alpha_\tau \mu_{\mathcal{L}(E)}(\Phi_L,\Phi_E) $ is related to the size of corresponding generic additive character variety over $\mathfrak{e}=\mathrm{Lie}(E)$ from Equation \eqref{eq:additive} as follows:
  \begin{equation}\label{eq:add and mul}
  \sum_{\tau \in \mathcal{T}(G)} \alpha_\tau \mu_{\mathcal{L}(E)}(\Phi_L,\Phi_E) = \frac{|W|^{\ell-1}}{q^{\gamma_E}|Z_E^F| |W_E|^{\ell-1}} \cdot |\mathcal{X}_\mathfrak{e}^F|
  \end{equation}
  for some  generic regular semisimple adjoint orbits $O_{\mathfrak{e}}$  in $\mathfrak{e}$.
  Remind that $ \mu_{\mathcal{L}(E)}(\Phi_L,\Phi_E)$ is zero if $\Phi_L$ is not Levi subsystem of $\Phi_E$, and any Levi subsystem of $\Phi_E$ is a pseudo-Levi subsystem of $\Phi$ from Lemma \ref{lem:levi-E-pseudo-levi-G}.

\subsubsection{Conclusion}
We are now ready to present our main result by using the discussions and Equation \eqref{eq:add and mul}.
\begin{thm}\label{thm:main}
If $(\theta_1,\ldots ,\theta_{\ell})$ is a generic tuple of regular linear characters, we have that
   \begin{equation}\label{eq:mainresult}
    \langle \Lambda \otimes \chi_{\theta_1}\otimes \ldots \otimes \chi_{\theta_{\ell}},1\rangle    = \sum_{\substack{E\in \mathcal{E}(G)\\ E \text{ is isolated} }} 
    \frac{N(E) |W|^{\ell-1}}{q^{\gamma_E}|Z_E^F| |W_E|^{\ell-1}}\cdot  |\mathcal{X}_\mathfrak{e}^F|,
\end{equation}
where $\gamma_E= \dim(Z_E) +(g-1)\dim(\mathfrak{e}) + \ell|\Phi_E^+|= \dim(Z_G) +(g-1)\dim(\mathfrak{e}) + \ell|\Phi_E^+|$.
\end{thm}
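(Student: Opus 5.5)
The proof assembles three ingredients already established above: the type-sum formula \eqref{eq:resultmultiplicity}, the M\"obius rearrangement lemma just proved, and the point count \eqref{eq:additive} applied with $E$ in place of $G$.

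First, rewrite \eqref{eq:resultmultiplicity} as $\sum_{\tau}a_\tau\sum_{E}\mu_{\mathcal{E}(G)}(\Phi_L,\Phi_E)|Z_E^F|$. Then apply the preceding lemma to each type $\tau=([\Phi_L],[u])$ and interchange the two finite sums. This gives
\[
\langle \Lambda \otimes \chi_{\theta_1}\otimes \ldots \otimes \chi_{\theta_{\ell}},1\rangle=\sum_{\substack{E\in\mathcal{E}(G)\\ E\text{ is isolated}}}N(E)\sum_{\tau\in\mathcal{T}(G)}a_\tau\,\mu_{\mathcal{L}(E)}(\Phi_L,\Phi_E).
\]
By the convention $\mu_{\mathcal{L}(E)}(\Phi_L,\Phi_E)=0$ when $\Phi_L$ is not Levi in $\Phi_E$, only types whose semisimple part has root system a Levi subsystem of $\Phi_E$ survive. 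Lemma \ref{lem:levi-E-pseudo-levi-G} guarantees that every Levi subsystem of $\Phi_E$ actually occurs as some $\Phi_{C_G(s)}$. So the surviving types are exactly those indexed by pairs ($W_E$-class of a standard Levi subsystem of $E$, nilpotent or unipotent class in it).

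The main step is to verify \eqref{eq:add and mul}: for fixed isolated $E$, match the inner sum with \eqref{eq:add and mul} applied to the reductive group $E$ and its Lie algebra $\mathfrak{e}$. Several points need checking.

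\begin{itemize}
\item Generic regular semisimple split orbits exist in $\mathfrak{e}$, using $2g+\ell\ge 3$ and the large-$q$ assumption.
\item $Z_E$ and $Z_G$ have the same dimension, since $E$ is isolated; this gives the equality of the two expressions for $\gamma_E$.
\item Springer transports the factors: $Q_T^L(u)=Q_T^L(\varpi(n))$, $|C_L(u)^F|=|C_L(n)^F|$ and $\dim H_\tau=\dim C_L(n)$. All of these depend only on $L$, and $L$ is a Levi subgroup of $E$.
\end{itemize}

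The delicate point, and the expected main obstacle, is the orbit bookkeeping. In $a_\tau$ the data are $|W|^{\ell-1}|[\Phi_L]|/|W_L|^{\ell-1}$, where $[\Phi_L]$ is a $W$-orbit. In the $E$-count \eqref{eq:additive} the data are $|W_E|^{\ell-1}|[\Phi_L]_E|/|W_L|^{\ell-1}$, where $[\Phi_L]_E$ is a $W_E$-orbit. The ratio $|W|^{\ell-1}/|W_E|^{\ell-1}$ is the prefactor in \eqref{eq:add and mul}. One must still check that summing over $W$-orbits of $\Phi_L$ weighted by $|[\Phi_L]|$ agrees with summing over $W_E$-orbits of Levi subsystems of $E$ weighted by $|[\Phi_L]_E|$. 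Both sums just count individual standard subsystems $\Phi_L\subseteq\Phi_E$ with a given Levi property, each counted once. To make this rigorous, unfold both sides as sums over individual subsystems rather than classes, using $\frac{1}{|N_W(W_L)|}=\frac{|[\Phi_L]|}{|W|}$ and its analogue for $E$. The remaining factors $|Z_E^F|$ and $q^{\gamma_E}$ are then divided out.

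Substituting \eqref{eq:add and mul} into the displayed identity yields \eqref{eq:mainresult}.
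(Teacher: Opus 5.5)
Your outline is the paper's own proof: \eqref{eq:resultmultiplicity}, then the M\"obius rearrangement lemma, then \eqref{eq:add and mul} read off from \eqref{eq:additive} for $E$. The paper simply asserts \eqref{eq:add and mul}. There is, however, a real gap in the step you flag as delicate. Your justification of it fails, and \eqref{eq:add and mul} is in fact false for an individual $E$ in general.

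On the $G$-side, a type $\tau=([\Phi_L],[u])$ enters through one fixed representative $\Phi_L$, weighted by the size $|[\Phi_L]|$ of its whole $W$-orbit. But $\mu_{\mathcal{L}(E)}(\cdot,\Phi_E)$ is only $W_E$-invariant. So the surviving types are not indexed by $W_E$-classes of Levi subsystems of $\Phi_E$, and the $G$-side does not count each Levi $\Phi_L\subseteq\Phi_E$ once. One $W$-class can contain several $W_E$-classes, and it can also contain members lying outside $\Phi_E$.

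Here is a concrete counterexample. Take $G=G_2$ and $\Phi_E=\{\pm\beta,\pm\alpha\}$ of type $A_1\times\widetilde{A}_1$, with $\beta$ long and $\alpha\perp\beta$ short; $\Phi_E$ is one of three $W$-conjugates. The $W$-orbit of $\{\pm\beta\}$ has three members, and only one of them lies in $\Phi_E$. Fix a unipotent class and put $b=a_\tau/3$. The $G$-side then contributes $-3b$ or $0$ to this $E$, depending on which representative was chosen. The $E$-side (one $W_E$-class of size $1$, with $\mu_{\mathcal{L}(E)}=-1$) contributes $-b$. Unfolding "both sides" for a fixed $E$ is therefore illegitimate, because $|[\Phi_L]|\,\mu_{\mathcal{L}(E)}(\Phi_L,\Phi_E)$ is not a $W$-invariant function of $\Phi_L$. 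The paper's $\Sp_4$ example hides this, since there $\Phi_E$ is $W$-stable.

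The theorem itself survives. The discrepancies cancel over each $W$-orbit of isolated $E$, on which $N(E)$, $|Z_E^F|$, $|W_E|$, $\gamma_E$ and $|\mathcal{X}_{\mathfrak e}^F|$ are all constant. The clean repair is to unfold before distributing over $E$.

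1. Write $a_\tau=|[\Phi_L]|\,b(\Phi_L,[u])$, where $b$ is $W$-invariant.
2. $W$ acts on $\mathcal{E}(G)$ by poset automorphisms that preserve isolatedness and $|Z_E^F|$. Hence the factor $c$ below is $W$-invariant, and
\[
\sum_{\tau}a_\tau\, c(\Phi_L)=\sum_{\Phi'}\sum_{[u]}b(\Phi',[u])\,c(\Phi'),\qquad c(\Phi'):=\sum_{E\ \mathrm{isolated}}\mu_{\mathcal{E}(G)}(\Phi',\Phi_E)\,|Z_E^F|,
\]
where $\Phi'$ runs over individual pseudo-Levi subsystems.
3. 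Apply the rearrangement lemma to each $\Phi'$ separately, then interchange the sums.
4. Only now, for fixed $E$, fold $\sum_{\Phi'\in\mathcal{L}(E)}$ into $W_E$-orbits. This is valid because $b$ and $\mu_{\mathcal{L}(E)}(\cdot,\Phi_E)$ are $W_E$-invariant.

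Combined with your Springer and centre checks, the result for each $E$ is exactly $\frac{|W|^{\ell-1}}{q^{\gamma_E}|Z_E^F||W_E|^{\ell-1}}|\mathcal{X}_{\mathfrak e}^F|$, and \eqref{eq:mainresult} follows.
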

 
It is easy to see that the leading coefficient of this multiplicity is $|Z_G^F|$, and its degree is $(g-1)\dim(G)+\dim(Z_G)+\ell|\Phi^+|$.

\subsubsection{Example}
Let us consider an example. Let $G=\Sp_4$, $\fg=\mathfrak{sp}_4$ and $E=\SL_2\times \SL_2$. Note that $\SL_2\times \SL_2$ is the only non-trivial isolated pseudo-Levi subgroup in $\Sp_4$. Then
\begin{enumerate}
    \item[(i)] \[
\langle \chi_{\theta_1}\otimes \chi_{\theta_2}\otimes \chi_{\theta_3},1\rangle=2q^2+12q+48,
\]
\item[(ii)]\[
\frac{1}{q^{\gamma_G}}|(\mathcal{X}_{(O_{1,\fg}, O_{2,\fg},O_{3,\fg})}^\fg)^F|= \frac{1}{q^2}\cdot(2q^4+12q^3+40q^2)=2q^2+12q+40,
\]
\item[(iii)]\[
\begin{split}
  &   \frac{ |W|^2}{  |W(E)|^2}\cdot 
\frac{N(E)}{q^{\gamma_E} |Z_E^F|} 
|(\mathcal{X}_{(O_{1,Lie(E)}, O_{2,Lie(E)},O_{3,Lie(E)})}^{Lie(E)})^F| =2^2\cdot \frac{2}{4} \cdot 4=8.
\end{split}
\]

\end{enumerate}
 Therefore, we can check that Theorem \ref{thm:main} holds in this case.

\subsection{Generic additive character}\label{ss:generic-character}

In this section, we extend \cite[Lemma 6.8.4]{letellier2013quiver} to arbitrary reductive groups, i,e., an additive version of generic character with a generic regular semisimple adjoint orbits of $G$. This is an easy extension, but we give the proof for reader's convenience.  Actually, this is not directly related to our result, but this observation might imply that the computation of the size of an additive character variety in \cite[\S6]{KNWG} is related to compute the number of absolutely indecomposable parabolic $G$-bundles. Note that the generic linear additive character also appears in \cite[\S4.3.2]{Let16}. 

Let us start with defining a Lie algebra version of generic characters of $T^F$. For a Levi subgroup $L$ and its Lie algebra $\mathfrak{l}$, we denote the centre of $\mathfrak{l}$ as $z_\mathfrak{l}$. We say that a linear additive character of $z_\mathfrak{l}^F$ is \emph{generic} if its restriction to $z_\mathfrak{g}^F$ is trivial and its restriction to $z_\mathfrak{m}^F$ is non-trivial for any proper Levi subgroup $M$ of $G$ which contains $L$. This definition is similar to the generic character of $T^F$.
 
 Note that when $\mathcal{O}_1, \ldots , \mathcal{O}_{\ell}$ are adjoint orbits of split regular semisimple elements $x_1, \ldots , x_{\ell}$ in $\fg$, from \cite{letellier2005fourier}, we have the following Fourier transform: $$\mathcal{F}^\fg(\mathbf{X}_{IC_{\overline{\mathcal{O}_i}}})=\mathfrak{R}_{\mathfrak{t}}^\fg(\mathcal{F}^{\mathfrak{t}}(1_{x_i})),$$ where $\mathfrak{t}$ is the Lie algebra of a split torus $T$ in $G$ and $\mathfrak{R}_\mathfrak{t}^\fg$ the Deligne-Lusztig induction on Lie algebra.
Recall that $\mathcal{F}^{\mathfrak{t}}(1_{x_i})\,:\, \mathfrak{t}^F\rightarrow \mathbb{F}_q^\times$ given by $h \rightarrow \Psi(\mu(x_i,h))$, where $\Psi $ is non-trivial additive character of $\mathbb{F}_q$, and $\mu$ is a $G$-invariant non-degenerate bilinear form on $\fg$.
 \begin{lem}
    Let us assume that $({O}_1, \ldots , {O}_{\ell})$ is a tuple of generic regular split semisimple adjoint orbits in $\fg$. Then $\prod_{i=1}^{\ell} \mathcal{F}^{\mathfrak{t}}(1_{\Ad_{g_i}(x_i)})|_{z_{\mathfrak{l}}}$ is a generic character of $z_{\mathfrak{l}}^F$ for any $F$-stable Levi subgroup $L$ of $G$ which satisfies the following condition: For all $i \in \{1,2,\ldots, \ell\}$, there exists $g_i \in G^F$ such that $g_i T g_i^{-1} \subset L$, i.e., $Z_L\subset g_iTg_i^{-1}$.
\end{lem}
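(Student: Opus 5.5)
The plan is to unwind the definition of the character and reduce genericity to the additive genericity of $(s_1,\ldots,s_\ell)$ in Definition \ref{defe:genericelement}, via Lemma \ref{lem:equivalence}. Write $x_i\in\ft^F$ for the split regular semisimple representatives of $O_i$, and let $g_i\in G^F$ with $g_iTg_i^{-1}\subset L$, so $z_\fl\subset \Ad_{g_i}(\ft)$. I will read $\mathcal{F}^{\ft}(1_{\Ad_{g_i}(x_i)})$ as the character $h\mapsto \Psi(\mu(\Ad_{g_i}(x_i),h))$ of $\Ad_{g_i}(\ft)^F$, which contains $z_\fl^F$. Then for $h\in z_\fl^F$,
\[
\prod_{i=1}^{\ell}\mathcal{F}^{\ft}(1_{\Ad_{g_i}(x_i)})(h)=\Psi\Bigl(\mu\Bigl(\sum_{i=1}^{\ell}\Ad_{g_i}(x_i),h\Bigr)\Bigr).
\]
So the character is $\Psi(\mu(X,\cdot))|_{z_\fl^F}$ with $X:=\sum_i \Ad_{g_i}(x_i)\in\fl^F$. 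The point is that each $\Ad_{g_i}(x_i)$ lies in $\fl\cap O_i$, since $\Ad_{g_i}(\ft)\subset\fl$.

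\emph{Triviality on $z_\fg^F$.} Genericity gives $\sum_i x_i\in[\fg,\fg]$ (apply Definition \ref{defe:genericelement}'s convention as in the proof of Theorem \ref{conj:between2and3}). Also $\Ad_{g_i}(x_i)-x_i\in[\fg,\fg]$ by the fact from \cite{borel} used in Lemma \ref{lem:equivalence}. Hence $X\in[\fg,\fg]$. Since $\mu$ is $G$-invariant, $\mu([\fg,\fg],z_\fg)=0$, so $\mu(X,h)=0$ for $h\in z_\fg^F$ and the character is trivial there.

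\emph{Non-triviality on $z_\fm^F$ for proper Levi $M\supset L$.} Here $X\in\fl\subset\fm$ with $\Ad_{g_i}(x_i)\in\fm\cap O_i$. By Lemma \ref{lem:equivalence} and \S\ref{sss:algebra-parabolic-generic} (after conjugating $M$ to a standard Levi by an element of $G^F$, which is possible since $M$ is $F$-stable containing an $F$-conjugate of the split torus $T$), we get $X\notin[\fm,\fm]$. Now use the decomposition $\fm=z_\fm\oplus[\fm,\fm]$, which is orthogonal for $\mu$ since $\mu$ is invariant and nondegenerate in very good characteristic. Nondegeneracy of $\mu|_{z_\fm}$ and the fact that the $z_\fm$-component $X_z$ of $X$ is nonzero give an $h\in z_\fm$ with $\mu(X,h)=\mu(X_z,h)\neq0$. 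Everything is $F$-stable, so $X_z\in z_\fm^F$ and we may take $h\in z_\fm^F$. Since the values $\mu(X_z,h)$ over $h\in z_\fm^F$ form a nonzero $\Fq$-subspace, hence all of $\Fq$, and $\Psi$ is nontrivial, some value $\Psi(\mu(X,h))$ is $\neq1$.

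The main obstacle I expect is this last step: justifying the $\mu$-orthogonal decomposition $\fm=z_\fm\oplus[\fm,\fm]$ over $\Fq$, and the nondegeneracy of $\mu$ on $z_\fm$, in very good characteristic. I would cite \cite{letellier2005fourier} for the existence of such a $\mu$ whose restriction to every Levi subalgebra is nondegenerate. The remaining parts are routine bookkeeping with the definitions.
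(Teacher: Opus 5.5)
Your proposal is correct and follows essentially the same route as the paper. You rewrite the product as $\Psi(\mu(X,\cdot))$ with $X=\sum_i \Ad_{g_i}(x_i)$, get triviality on $z_{\fg}^F$ from $X\in[\fg,\fg]$ and $\mu([\fg,\fg],z_{\fg})=0$, and get non-triviality from genericity ($X\notin[\mathfrak{m},\mathfrak{m}]$) together with the $\mu$-orthogonal decomposition $\mathfrak{m}=[\mathfrak{m},\mathfrak{m}]\oplus z_{\mathfrak{m}}$ and the nondegeneracy of $\mu$ on $z_{\mathfrak{m}}$. Your version is slightly more careful than the paper's in two places: you treat every proper $M\supseteq L$ explicitly, and you supply the $\Fq$-linearity argument needed to pass from $\Psi(\mu(X_z,\cdot))\equiv 1$ on $z_{\mathfrak{m}}^F$ to $\mu(X_z,\cdot)\equiv 0$, which the paper skips.
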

 \begin{proof}
    \begin{enumerate}
    \item Let $L$ be a proper Levi subgroup containing $g_1 Tg_1^{-1}, \ldots , g_{\ell} T g_{\ell}^{-1}$, so $\Ad_{g_i}(x_i) \in \mathfrak{l}$ for all $i=1,2, \ldots , \ell$. From the definition of generic (cf. \S\ref{sss:algebra-parabolic-generic}), we have 
    $$\sum_{i=1}^{\ell} \Ad_{g_i}(x_i) \notin [\mathfrak{l}, \mathfrak{l}] \Rightarrow \sum_{i=1}^{\ell} \Ad_{g_i}(x_i)= h_1+h_2 \ \text{such that}\ h_1 \in [\mathfrak{l},\mathfrak{l}],\ h_2 \in z_\mathfrak{l}\backslash \{0\}$$ from the decomposition $\mathfrak{l}=[\mathfrak{l}, \mathfrak{l}] \oplus z_{\mathfrak{l}}$. 
    
    Let us show that $\prod_{i=1}^{\ell} \mathcal{F}^{\mathfrak{t}}(1_{\Ad_{g_i}(x_i)})|_{z_{\mathfrak{l}}}$ is non-trivial on $z_{\mathfrak{l}}^F$.
    Recall that the restriction of $\mu$ to $z_{\mathfrak{l}}$ is also non-degenerated using \cite[Lemma 2.5.14 and 2.5.16]{letellier2005fourier}.
     With this observation, this implies that if $\prod_{i=1}^{\ell} \mathcal{F}^{\mathfrak{t}}(1_{\Ad_{g_i}(x_i)})|_{z_{\mathfrak{l}}}$ is trivial, then we have \begin{equation}
   \begin{split}
     0=&\prod_{i=1}^{\ell} \mathcal{F}^{\mathfrak{t}}(1_{\Ad_{g_i}(x_i)})|_{z_{\mathfrak{l}}}(z)=\prod_{i=1}^{\ell} \Psi(\mu(\Ad_{g_i}(x_i),z))
     \\= &\Psi \left(\mu\left(\sum_{i=1}^{\ell} \Ad_{g_i}(x_i),z\right) \right) 
     =\Psi(\mu(h_1+h_2,z))\\
     =& \Psi(\mu(h_1,z)+\mu(h_2,z))=\Psi(\mu(h_2,z)).
\end{split}
\end{equation} for any $z\in z_{\mathfrak{l}}$. Note that $\mu(h_1,z)=0$ from the fact that the vector space $[\mathfrak{l},\mathfrak{l}]$ is the orthogonal complement of $z_{\mathfrak{l}}$ with respect to $\mu$ using \cite[Lemma 2.5.14 and 2.5.16]{letellier2005fourier}. 

Again, $\mu$ is non-degenerate on $z_{\mathfrak{l}}$, so $\mu(h_2,z)=0$ for any $z\in z_{\mathfrak{l}}$, and this implies that $h_2=0$. Therefore, $\sum_{i=1}^{\ell} \Ad_{g_i}(x_i)= h_1+h_2 =h_1\in [\mathfrak{l},\mathfrak{l}]$, and this contradicts the assumption that $\sum_{i=1}^{\ell} \Ad_{g_i}(x_i) \notin [\mathfrak{l}, \mathfrak{l}]$. In conclusion, $\prod_{i=1}^{\ell} \mathcal{F}^{\mathfrak{t}}(1_{\Ad_{g_i}(x_i)})|_{z_{\mathfrak{l}}}$ is non-trivial.

    \item Let us show that $\prod_{i=1}^{\ell} \mathcal{F}^{\mathfrak{t}}(1_{\Ad_{g_i}(x_i)})|_{z_{\fg}}$ is trivial character on $z_{\fg}^F$. From \cite[Lemma 2.5.16]{letellier2005fourier}, $[\fg,\fg] \perp z_{\fg}$ with respect to $\mu$, i.e., $\mu (h_1,h_2)=0$ when $h_1 \in [\fg,\fg]$ and $h_2\in z_{\fg}$. So, from the assumption that $\sum_{i=1}^{\ell} \Ad_{g_i}(x_i)\in [\fg,\fg]$, we have 
    \[
 \prod_{i=1}^{\ell} \mathcal{F}^{\mathfrak{t}}(1_{\Ad_{g_i}(x_i)})|_{z_{\mathfrak{g}}}(z)=\prod_{i=1}^{\ell} \Psi(\mu(\Ad_{g_i}(x_i),z))=\Psi\left(\mu\left(\sum_{i=1}^{\ell} \Ad_{g_i}(x_i),z\right)\right)=0
    \]
    for any $z\in z_{\fg}$. Therefore, the character $\prod_{i=1}^{\ell} \mathcal{F}^{\mathfrak{t}}(1_{\Ad_{g_i}(x_i)})|_{z_{\fg}}$ is trivial character on $z_{\fg}^F$.
\end{enumerate}
With these two observations, we are done.\end{proof}

  \appendix
  \section{Example}\label{s:example}In this section, we present an example illustrating that if $\mathrm{Ad}_{g_1  }(s_1) + \mathrm{Ad}_{g_2  }(s_2) + \mathrm{Ad}_{g_3  }(s_3)
= 0$, then there exists a subset $t \in \mathrm{Stab}_G(g_1B,g_2B,g_3B)^{ss}$ such that $C_G(t)^\circ$ is an isolated pseudo-Levi subgroup, distinct from \(G\).

We consider the group \(G = \SO_5 = \{ g \in \GL_5 \mid gg^T = g^Tg = 1 \}\) and its Lie algebra \(\mathfrak{g} = \mathfrak{so}_5\), defined over an algebraically closed field \(k\). Our goal is to compute the set $\mathrm{Stab}_G(g_1B, g_2B, g_3B)$ in this setting. Although \(\SO_5\) is not simply connected, it remains meaningful to study this case.

The subgroup $T$ of $G$ is a split maximal torus, i.e., 
\[
T\colonequals \left\{\begin{bmatrix}
	\alpha &\beta&0&0&0\\ -\beta&\alpha&0&0&0 \\ 0&0&\gamma&\omega&0\\ 0&0&-\omega&\gamma&0\\ 0&0&0&0&1
\end{bmatrix} \,\middle\vert\,\substack{ \alpha,\beta,\gamma,\omega\in k\\ \alpha^2+\beta^2=1\\  \gamma^2+\omega^2=1 }\right\}
\]
and its Lie algebra $\ft$ is
\[
\ft= \left\{\begin{bmatrix}
	0 &a&0&0&0\\ -a&0&0&0&0 \\ 0&0&0&b&0\\ 0&0&-b&0&0\\ 0&0&0&0&1
\end{bmatrix} \,\middle\vert\, a,b\in k \right\}.
\]

Let $(O_1, O_2,O_3)$ be a generic regular semisimple tuple in $\fg$ such that $O_i=\Ad_G(s_i)$ for some $s_i \in \ft$ for all $i=1,2,3$. Then let us take an element $g_1,g_2,g_3\in G$ such that
\[
\Ad_{g_1}(s_1)+\Ad_{g_2}(s_2)+\Ad_{g_3}(s_3)=0.
\]
Then our purpose is to find some semisimple elements in 
\[
\mathrm{Stab}_G((g_1B,g_2B,g_3B)).
\]

\subsection{An example of generic tuple}
We will give an example of generic regular semisimple adjoint orbits.
To check the genericity, we  will use Lemma \ref{lem:equivalence}.

Let
\[
s_1=\begin{bmatrix}
	0&3&0&0&0\\-3&0&0&0&0\\0&0&0&6&0\\0&0&-6&0&0\\0&0&0&0&0\end{bmatrix}
, \quad s_2=\begin{bmatrix}
	0&9&0&0&0\\ -9&0&0&0&0\\0&0&0&18&0\\0&0& -18&0&0\\0&0&0&0&0
\end{bmatrix}\text{ and }s_3=\begin{bmatrix}
	0&-8\sqrt{6}&0&0&0\\8\sqrt{6}&0&0&0&0\\0&0&0&-4\sqrt{6}&0\\0&0& 4\sqrt{6}&0&0\\0&0&0&0&0
\end{bmatrix}.\]
These are elements in $\fg$ since $\fg$ consists of skew-symmetric matrices.
Note that over $\GL_5$, $s_1,s_2$ and $s_3$ are diagonalisable, so every element in $C_G(s_i)$  is semisimple. This implies that $s_1,s_2$ and  $s_3$ are regular.  

\subsection{Genericity} Let us check that why this is generic with Lemma \ref{lem:equivalence}. The Weyl group of $SO_5$, denoted by $W$, is genereated by two elements, say $w_1$ and $w_2$. We have that
\[
w_1\cdot \begin{bmatrix}
	0 &a&0&0&0\\ -a&0&0&0&0 \\ 0&0&0&b&0\\ 0&0&-b&0&0\\ 0&0&0&0&1
\end{bmatrix}= \begin{bmatrix}
	0 &b&0&0&0\\ -b&0&0&0&0 \\ 0&0&0&a&0\\ 0&0&-a&0&0\\ 0&0&0&0&1
\end{bmatrix}
\]
and 
\[
w_2\cdot \begin{bmatrix}
	0 &a&0&0&0\\ -a&0&0&0&0 \\ 0&0&0&b&0\\ 0&0&-b&0&0\\ 0&0&0&0&1
\end{bmatrix}= \begin{bmatrix}
	0 &-a&0&0&0\\ a&0&0&0&0 \\ 0&0&0&b&0\\ 0&0&-b&0&0\\ 0&0&0&0&1
\end{bmatrix}.
\]

Note that $s=\begin{bmatrix}
	0 &a&0&0&0\\ -a&0&0&0&0 \\ 0&0&0&b&0\\ 0&0&-b&0&0\\ 0&0&0&0&1
\end{bmatrix}\in \ft$ is not in $[\fl,\fl]$ for any proper standard Levi subgroup if and only if $a=0$, $b=0$ or $a=b$.
This is because every proper Levi subalgebra in $\fg$ has one semisimple rank, i.e., $[\fl,\fl]\simeq \mathfrak{sl}_2$ for any proper Levi subalgebra $\fl$. From direct computation, (I will skip the computation,, but please let me know if you want)  we can check that $\ft\cap [\fl,\fl]$ is the form of
\[\begin{bmatrix}
	0&a&0&0&0\\-a&0&0&0&0\\0&0&0&0&0\\0&0&0&0&0\\0&0&0&0&0
\end{bmatrix}\ \text{or }
\begin{bmatrix}
	0&a&0&0&0\\-a&0&0&0&0\\0&0&0&\pm a&0\\0&0&\mp a&0&0\\0&0&0&0&0
\end{bmatrix}.
\]

With the above discussions, it is easy to check that $(s_1,s_2,s_3)$ is generic.
This is because    $a+8\sqrt{6}\neq 0$ and $a\pm 8\sqrt{6}\neq b\pm 4\sqrt{6}$ for any integers $a$ and $b$.
\subsection{Finding an example of $(v_1,v_2,v_3)$}
Let us take as follows:
\[
g_1=\begin{bmatrix}
	1&0&0&0&0\\ 0&1/3&2/3&2/3&0\\0&2/3&-2/3&1/3&0\\0&2/3&1/3&-2/3&0\\0&0&0&0&1
\end{bmatrix},\quad 
g_2=\begin{bmatrix}
	1/3&0&2/3&2/3&0\\0&1&0&0&0\\2/3&0&-2/3&1/3&0\\2/3&0&1/3&-2/3&0\\0&0&0&0&1
\end{bmatrix}
\]
and \[\widetilde{g_3}=\begin{bmatrix}
	\frac{1}{2}\sqrt{\frac{3}{14}}(\sqrt{6}i-1)&-	\frac{1}{2}\sqrt{\frac{3}{14}}(\sqrt{6}i-1)&\frac{3-i\sqrt{6}}{2\sqrt{30}}
	&\frac{3+i\sqrt{6}}{2\sqrt{30}}&0 \\
	\frac{\sqrt{6}i-1}{2\sqrt{42}} &-\frac{\sqrt{6}i+1}{2\sqrt{42}}&\frac{1+3i\sqrt{6}}{2\sqrt{30}}&\frac{1-3i\sqrt{6}}{2\sqrt{30}}&0\\
	\frac{-5-2i\sqrt{6}}{2\sqrt{42}}&\frac{-5+2i\sqrt{6}}{2\sqrt{42}}&\frac{\sqrt{5}}{2\sqrt{6}}&\frac{\sqrt{5}}{2\sqrt{6}}&0 \\
	\frac{\sqrt{7}}{2\sqrt{6}}&\frac{\sqrt{7}}{2\sqrt{6}}&\frac{\sqrt{5}}{2\sqrt{6}}&\frac{\sqrt{5}}{2\sqrt{6}}&0 \\
	0&0&0&0&1
\end{bmatrix}\cdot \begin{bmatrix}
	0&0&\frac{-i}{\sqrt{2}}&\frac{i}{\sqrt{2}}&0\\
	0&0&\frac{1}{\sqrt{2}}&\frac{1}{\sqrt{2}}&0\\
	\frac{-i}{\sqrt{2}}&\frac{i}{\sqrt{2}}&0&0&0\\
	\frac{1}{\sqrt{2}}&\frac{1}{\sqrt{2}}&0&0&0\\
	0&0&0&0&1
\end{bmatrix}^{-1}.
\]
Then we have that
\[
\Ad_{g_1}(s_1)=\begin{bmatrix}
	0&1&2&2&0\\-1&0&4&-4&0\\-2&-4&0&2&0\\-2&4&-2&0&0\\0&0&0&0&0
\end{bmatrix},\quad \Ad_{g_2}(s_2)=\begin{bmatrix}
	0&3&12&-12&0\\-3&0&-6&-6&0\\-12&6&0&6&0\\12&6&-6&0&0\\0&0&0&0&0
\end{bmatrix}
\]
and
\[\Ad_{\widetilde{g_3}}(s_3)=\begin{bmatrix}
	0&-4&-14&10&0\\
	4&0&2&10&0\\
	14&-2&0&-8&0\\
	-10&-10&8&0&0\\
	0&0&0&0&0
\end{bmatrix}.
\]

Then we can check that 
\[
\Ad_{g_1}(s_1)+\Ad_{g_2}(s_2)+\Ad_{\widetilde{g_3}}(s_3)=0.
\]

Note that $\widetilde{v_3}\notin \SO_5$.  However, from \cite[Lemma 3.2]{AL11}, we can take $g_3 \in SO_5$ such that $s_3'\colonequals \Ad_{g_3}(s_3)=\Ad_{\widetilde{g_3}}(s_3)$. Furthermore, from the form of $s_3$ and $s_3'$, we can deduce that $g_3\in \SO_4\times k \subset SO_5$, i.e., $g_3$ and $\widetilde{g_3}$ have the same form.
\subsection{Stabiliser}
Let us take \[
t=\begin{bmatrix}
	-1&0&0&0&0\\0&-1&0&0&0\\0&0&-1&0&0\\0&0&0&-1&0\\0&0&0&0&1
\end{bmatrix}\in T\subset G.\]
Then we can easily check that 
\[
t(g_iB)=t(g_it^{-1}B)=(tg_it^{-1})B=g_iB\quad \text{for all}\ i=1,2,3
\]from the form of $g_1,g_2$ and $g_3$.
Therefore, $t\in \mathrm{Stab}_G((g_1T,g_2T,g_3T)).$
Recall that $G=\SO_5$ is adjoint, i.e., it has only the trivial centre, and so $t$ is not a central element. 
Therefore, this implies that there exists $
t\in \mathrm{Stab}_G((g_1B,g_2B,g_3B))$ {such that} $C_G(t)$ {is proper isolated pseudo-Levi subgroup}.

 \section*{Acknowledgments}		
GyeongHyeon Nam  was supported by the National Research Foundation of Korea (NRF) grant funded by the Korea government (MSIT) (No. RS-2024-00334558 and No. RS-2025-02262988) and Oscar Kivinen's Väisälä project grant of the Finnish Academy of Science and Letters.
The author is very grateful to Emmanuel Letellier for his helpful comments and suggestions on this project, and would also like to thank Oscar Kivinen for his valuable feedback.

\end{document}